\RequirePackage{etoolbox}
% For easily writing stuff for both the submitted version as the arxiv version in the same document.
\newtoggle{arxiv}
\toggletrue{arxiv} % if displaying the arxiv version
% \togglefalse{arxiv} % if displaying the conference version

\iftoggle{arxiv}{
	\documentclass[runningheads, envcountsame]{llncs}
}{
	\documentclass{llncs}
}

\bibliographystyle{splncs04}

\newenvironment{proofof}[1]{\noindent\emph{Proof of #1.}}{}

% Use \bbbone for a bb one, \bbbr for R, \bbbn for N and \bbbz for Z

\usepackage{cite}
\usepackage[UKenglish]{babel}
\usepackage{cmap}
\usepackage{hyperref}

\usepackage{microtype}

\usepackage{amsmath,mathtools,amssymb}
\usepackage{siunitx}

\usepackage{enumitem}
\usepackage{booktabs}

\iftoggle{arxiv}{
	\hypersetup{
	    colorlinks = true,
	    citecolor=blue,
	}
}{}

\title{Computing Inferences for Large-Scale Continuous-Time Markov Chains by Combining Lumping with Imprecision}
\titlerunning{Continuous-Time Markov Chains: Combining Lumping with Imprecision}
% \author{Alexander Erreygers\orcidID{0000-0002-0409-2999} \and Jasper De Bock\orcidID{0000-0003-1950-0059}}
\author{Alexander Erreygers \and Jasper De Bock}
% \institute{Ghent University, ELIS, SYSTeMS}
\institute{
	Ghent University, ELIS, SYSTeMS \\
	\email{\string{alexander.erreygers, jasper.debock\string}@ugent.be}
}

% Commands
\newcommand{\reals}{\bbbr}
\newcommand{\posreals}{\reals_{> 0}}
\newcommand{\nnegreals}{\reals_{\geq 0}}
\newcommand{\nats}{\bbbn}
\newcommand{\natz}{\bbbn_{0}}

\newcommand{\indica}[1]{\bbbone_{#1}}

\DeclarePairedDelimiter{\abs}{\lvert}{\rvert}
\DeclarePairedDelimiter{\card}{\lvert}{\rvert}
\DeclarePairedDelimiter{\norm}{\lVert}{\rVert}
\DeclarePairedDelimiterXPP{\varnorm}[1]{}{\lVert}{\rVert}{_{v}}{#1}

\newcommand\condbase[1][]{\:#1\lvert\:}
\let\cond\condbase

\DeclarePairedDelimiterX\gr[1](){\let\cond\scond #1}

\providecommand\given{}
\DeclarePairedDelimiterX{\set}[1]{\{}{\}}{%
	\renewcommand{\given}{\colon}
	#1
}

\newcommand{\setoffn}{\mathcal{L}}

\newcommand{\prob}{P}
\newcommand{\markov}{P}

\newcommand{\probm}{\prob}
\newcommand{\problum}{\lu{\prob}}
\newcommand{\prev}{E}
\newcommand{\prevm}{\prev}
\newcommand{\prevlum}{\lu\prev}
\newcommand{\limprevm}{\prev_{\infty}}

\newcommand{\setofconsproc}{\mathbb{P}}

\newcommand{\stsp}{\mathcal{X}}

\newcommand{\setofposdist}{\mathcal{D}}

\newcommand{\setoftseq}{\mathcal{U}}
\newcommand{\setoftseqne}{\mathcal{U}_{\emptyset}}
\newcommand{\setoftseql}[1]{\mathcal{U}_{< #1}}
\newcommand{\setoftseqnel}[1]{\mathcal{U}_{\emptyset, < #1}}

\newcommand{\emptytseq}{\emptyset}
\newcommand{\emptytup}{\diamond}

\newcommand{\lu}[1]{\hat{#1}}
\newcommand{\lustsp}{\lu{\stsp}}
\newcommand{\lumap}{\Lambda}
\newcommand{\invlumap}{\Gamma}

\newcommand{\dist}{\pi}
\newcommand{\inidist}{\dist_{0}}
\newcommand{\distst}{\dist^{\star}}
\newcommand{\luinidist}{\lu{\dist}_{0}}
\newcommand{\limdist}{\dist_{\infty}}

\newcommand{\lulimdist}{\lu{\dist}_{\infty}}
\newcommand{\credset}{\mathcal{M}}

\newcommand{\trm}{Q}

\newcommand{\lutrm}{\lu{\trm}}
\newcommand{\lutrmst}{\lu{\trm}^{\star}}
\newcommand{\setofalltrm}{\mathcal{R}}
\newcommand{\setoftrm}{\mathcal{\trm}}
\newcommand{\setoflutrm}{\lu{\setoftrm}}
\newcommand{\ltro}{\underline{\trm}}
\newcommand{\lultro}{\underline{\lutrm}}

\newcommand{\tm}{T}

\newcommand{\lto}{\underline{T}}
\newcommand{\lulto}{\underline{\lu{\tm}}}

\newcommand{\acces}{\rightsquigarrow}
\newcommand{\upacces}{\rightarrowtail}
\newcommand{\paths}{\Omega}
\newcommand{\pad}{\omega}
\newcommand{\elementaryevents}{\mathcal{E}}
\newcommand{\eventalgebra}{\mathcal{A}}
\newcommand{\ccpdomain}{\mathcal{C}}
\newcommand{\ccpevents}{\mathcal{E}}
\newcommand{\procdomain}{\mathcal{C}^{\mathrm{SP}}}
\newcommand{\luprocdomain}{\lu{\mathcal{C}}^{\mathrm{SP}}}

\spnewtheorem{definition}{Definition}{\bfseries}{\rmfamily}

\begin{document}

\maketitle

\begin{abstract}
	If the state space of a homogeneous continuous-time Markov chain is too large, making inferences---here limited to determining marginal or limit expectations---becomes computationally infeasible.
	Fortunately, the state space of such a chain is usually too detailed for the inferences we are interested in, in the sense that a less detailed---smaller---state space suffices to unambiguously formalise the inference.
	However, in general this so-called lumped state space inhibits computing exact inferences because the corresponding dynamics are unknown and/or intractable to obtain.
	We address this issue by considering an imprecise continuous-time Markov chain.
	In this way, we are able to provide guaranteed lower and upper bounds for the inferences of interest, without suffering from the curse of dimensionality.
\end{abstract}
% \keywords{(Imprecise) continuous-time Markov chain \and Lumping \and State space explosion.}

	\section{Introduction}
\label{sec:Introduction}
\emph{State space explosion}, or the exponential dependency of the size of a finite state space on a system's dimensions, is a frequently encountered inconvenience when constructing mathematical models of systems.
In the setting of continuous-time Markov chains (CTMCs), this exponentially increasing number of states has as a consequence that using the model to perform inferences---for the sake of brevity here limited to marginal and limit expectations---about large-scale systems becomes computationally intractable.
Fortunately, for many of the inferences we would like to make, a higher-level state description actually suffices, allowing for a reduced state space with considerably fewer states.
However, unfortunately, the low-level description and its corresponding larger state space are necessary in order to accurately model the system's dynamics.
Therefore, using the reduced state space to make inferences is generally impossible.

In this contribution, we address this problem using imprecise continuous-time Markov chains \cite{2017KrakDeBock,2016DeBock,2015Skulj}.
In particular, we outline an approach to determine guaranteed lower and upper bounds on marginal and limit expectations using the reduced state space.
We introduced a preliminary version of this approach in \cite{2017Rottondi,2018Erreygers}, but the current contribution is---to the best of our knowledge---its first fully general and theoretically justified exposition.
Compared to other approaches \cite{1994FraceschinisMuntz,2005Buchholz} that also determine lower and upper bounds on expectations, ours has the advantage that it is not restricted to limit expectations.
Furthermore, based on our preliminary experiments, our approach seems to produce tighter bounds.

% We argue how lumping naturally induces an imprecise CTMC in Sect.~\ref{sec:Lumping and the lumped stochastic processs}, and how we can use this to obtain lower and upper bounds on conditional expectations with respect to the lumped process.
% In Sect.~\ref{sec:Computing bounds on limit expectations}, we use the theory of imprecise CTMCs to provide guaranteed lower and upper bounds on limit expectations with respect to both the original as the lumped process.

	\section{Continuous-Time Markov Chains}
\label{sec:CTMCs}
We are interested in making inferences about a system, more specifically about the state of this system at some future time $t$, denoted by $X_{t}$.
The complication is that we are unable to predict the temporal evolution of the state with certainty.
Therefore, at all times $t \in \nnegreals{}$,\footnote{%
	We use $\nnegreals$ and $\posreals$ to denote the set of non-negative real numbers and positive real numbers, respectively.
	Furthermore, we use $\nats$ to denote the natural numbers and write $\natz$ when including zero.
} the state~$X_{t}$ of the system is a random variable that takes values---generically denoted by $x$, $y$ or $z$---in the state space $\stsp$.

	\subsection{Homogeneous Continuous-Time Markov Chains}
\label{ssec:Homogeneous CTMCs}
We assume that the stochastic process that models our beliefs about the system, denoted by $\gr{X_t}_{t \in \nnegreals}$, is a \emph{continuous-time Markov chain} (CTMC) that is \emph{homogeneous}.
For a thorough treatment of the terminology and notation concerning CTMCs, we refer to \cite{1997Norris,1991Anderson,2017KrakDeBock}.
Due to length constraints, we here limit ourselves to the bare necessities.

The stochastic process $\gr{X_t}_{t \in \nnegreals}$ is a CTMC if it satisfies the \emph{Markov property}, which says that for all $t_1, \dots, t_n, t, \Delta$ in $\nnegreals$ with $n\in\nats$ and $t_1 < \cdots < t_n < t$, and all $x_1, \dots, x_n, x, y$ in $\stsp$,
\begin{equation}
\label{eqn:HCTMC Markov}
	\probm\gr{X_{t+\Delta} = y \cond X_{t_1} = x_1 \dots, X_{t_n} = x_{n}, X_{t} = x}
	= \probm\gr{X_{t+\Delta} = y \cond X_{t} = x}.
\end{equation}
The CTMC $\gr{X_t}_{t \in \nnegreals{}}$ is \emph{homogeneous} if for all $t, \Delta$ in $\nnegreals$ and all $x, y$ in $\stsp$,
\begin{equation}
\label{eqn:HCTMC Homogeneity}
	\probm\gr{X_{t+\Delta} = y \cond X_t = x}
	= \probm\gr{X_{\Delta} = y \cond X_0 = x}.
\end{equation}

It is well-known that---both in the classical measure-theoretic framework \cite{1991Anderson} and the full conditional framework \cite{2017KrakDeBock}---a homogeneous continuous-time Markov chain is uniquely characterised by a triplet $(\stsp, \inidist, \trm)$, where $\stsp$ is a state space, $\inidist$ an initial distribution and $\trm$ a transition rate matrix.
% In the remainder, we will use the notation $\markov \coloneqq \gr{\stsp, \inidist{}, \trm}$ to denote the homogeneous continuous-time Markov chain characterised by this triplet.

The state space~$\stsp$ is taken to be a non-empty, finite and---without loss of generality---ordered set.
This way, any real-valued function~$f$ on $\stsp$ can be identified with a column vector, the $x$-component of which is $f(x)$.
The set containing all real-valued functions on $\stsp$ is denoted by $\setoffn\gr{\stsp}$.

The initial distribution~$\inidist$ is defined by
\begin{equation}
\label{eqn:HCTMC initial}
	\inidist\gr{x}
	\coloneqq \probm\gr{X_0 = x}
	\text{ for all $x$ in $\stsp$,}
\end{equation}
and hence is a probability mass function on $\stsp$.
We will (almost) exclusively be concerned with positive (initial) distributions, whom we collect in $\setofposdist\gr{\stsp}$ and will identify with row vectors.

The transition rate matrix~$\trm$ is a real-valued $\card{\stsp} \times \card{\stsp}$ matrix---or equivalently, a linear map from $\setoffn\gr{\stsp}$ to $\setoffn\gr{\stsp}$---with non-negative off-diagonal entries and rows that sum up to zero.
If for any $t$ in $\nnegreals$ we define the \emph{transition matrix over $t$} as
\begin{equation}
\label{eqn:Matrix exponential}
	\tm_t
	\coloneqq e^{t Q}
	= \lim_{n \to +\infty} \gr*{I + \frac{t}{n} Q}^{n},
	% = \sum_{n=0}^{+\infty} \frac{\gr{\Delta \trm}^{n}}{n!}
\end{equation}
% As is evident from the name and is well-known---see for instance \cite[Theorem~2.1.2]{1997Norris}---$\tm_{\Delta}$ is a transition matrix, in the sense that it is a real-valued $\card{\stsp} \times \card{\stsp}$ matrix with non-negative entries and rows that sum up to one.
then for all $t$ in $\nnegreals$ and all $x, y$ in $\stsp$,
\begin{equation}
\label{eqn:HCTMC transition}
	\probm\gr{X_t = y \cond X_0 = x}
	= \tm_t\gr{x, y}.
\end{equation}

Finally, we denote by $\prevm$ the expectation operator with respect to the homogeneous CTMC $\gr{X_t}_{t \in\nnegreals}$ in the usual sense.
It follows immediately from \eqref{eqn:HCTMC initial} and \eqref{eqn:HCTMC transition} that
$
	\prevm\gr{f\gr{X_t}}
	% \coloneqq{}& \sum_{y \in \stsp} \probm\gr{X_{t+\Delta} = y \cond X_u = x_u, X_t = x} f\gr{y} \\
	= \pi_0 \tm_t f
$ for any $f$ in $\setoffn\gr{\stsp}$ and any $t$ in $\nnegreals$.

	\subsection{Irreducibility}
\label{ssec:Irreducibility of homogeneous CTMCs}
In order not to be tangled up in edge cases, in the remainder we are only concerned with irreducible transition rate matrices.
Many equivalent necessary and sufficient conditions exist; see for instance \cite[Theorem~3.2.1]{1997Norris}.
For the sake of brevity, we here say that a transition rate matrix $\trm$ is \emph{irreducible} if, for all $t$ in $\posreals$ and $x, y$ in $\stsp$, $\tm_t \gr{x, y} > 0$.
% \begin{definition}
% \label{def:Irreducibility}
% 	The transition rate matrix $\trm$ is called \emph{irreducible} if, for all $t$ in $\posreals$ and $x, y$ in $\stsp$, $\tm_t \gr{x, y} > 0$.
% \end{definition}
% A generic transition rate matrix $\trm$ is called \emph{irreducible} if, for all two states $x, y$ in $\stsp$ and all positive real numbers $\Delta$, $\tm_{\Delta}\gr{x, y} > 0$.

Consider now a homogeneous CTMC that is characterised by $(\stsp, \inidist, \trm)$.
It is then well-known that for any $f$ in $\setoffn\gr{\stsp}$, the limit $\lim_{t \to +\infty} \prevm\gr{f\gr{X_t}}$ exists.
Even more, since we assume that $\trm$ is irreducible, this limit value is the same for all initial distributions $\inidist$ \cite[Theorem~3.6.2]{1997Norris}!
This common limit value, denoted by $\limprevm\gr{f}$, is called the \emph{limit expectation of $f$}.
Furthermore, the irreducibility of $\trm$ also implies that there is a unique stationary distribution $\limdist$ in $\setofposdist\gr{\stsp}$ that satisfies the \emph{equilibrium condition} $\limdist \trm = 0$.
This unique distribution is called the \emph{limit distribution}, as
$
	\limprevm\gr{f}
	= \limdist f
$.
% Solving the equilibrium condition is usually a

% It is obvious that in this form, checking irreducibility is not trivial as it requires the evaluation of \eqref{eqn:Matrix exponential} for all $\Delta$.
% % However, there are equivalent necessary and sufficient conditions for irreducibility that are easier to check.
% One of the more convenient equivalent conditions only requires us to check the sign of $\trm\gr{x, y}$ for certain combinations of distinct $x$ and $y$, see ** cite **.
In the remainder of this contribution, a \emph{positive and irreducible CTMC} is any homogeneous CTMC characterised by a positive initial distribution~$\inidist$ and an irreducible transition rate matrix~$\trm$.

	\section{Lumping and the Induced (Imprecise) Process}
\label{sec:Lumping and the lumped stochastic processs}
In many practical applications---see for instance \cite{2017Rottondi,2018Erreygers,2005Buchholz,1994FraceschinisMuntz}---we have a positive and irreducible CTMC that models our system and we want to use this chain to make inferences of the form $\prevm\gr{f\gr{X_t}} = \inidist \tm_t f$ or $\limprevm\gr{f}$.
As analytically evaluating the limit in \eqref{eqn:Matrix exponential} is often infeasible, we usually have to resort to one of the many available numerical methods---see for example \cite{2003MolerVanLoan}---that approximate $\tm_t$.
However, unfortunately these numerical methods turn out to be computationally intractable when the state space becomes large.
Similarly, determining the unique distribution $\limdist$ that satisfies the equilibrium condition also becomes intractable for large state spaces.

	% \subsection{The Reduced State Space}
Fortunately, as previously mentioned in Sect.~\ref{sec:Introduction}, the state space~$\stsp$ is often unnecessarily detailed.
Indeed, many interesting inferences can usually still be unambiguously defined using real-valued functions on a less detailed state space that corresponds to a higher-order description of the system, denoted by $\lustsp$.
However, this provides no immediate solution as the motive behind using the detailed state space~$\stsp$ in the first place is that this allows us to accurately model the (uncertain) dynamics of the system using a homogeneous CTMC; see \cite{2017Rottondi,2018Erreygers,2005Buchholz,1994FraceschinisMuntz,2014Ganguly} for practical examples.
In contrast, the dynamics of the induced stochastic process on the the reduced state space~$\lustsp$ are often unknown and/or intractable to obtain, which inhibits us from making exact inferences using the induced stochastic process.
We now set out to address this by allowing for imprecision.

	\subsection{Notation and Terminology Concerning Lumping}
We assume that the lumped state space~$\lustsp$ is obtained by \emph{lumping}---sometimes called grouping or aggregating, see \cite{1958Burke,1993BallYeo}---states in $\stsp$, such that $1 < \card{\lustsp} \leq \card{\stsp{}}$.
This lumping is formalised by the surjective \emph{lumping map} $\lumap \colon \stsp \to \lustsp{}$, which maps every state $x$ in $\stsp$ to a state $\lumap\gr{x} = \lu{x}$ in $\lustsp{}$.
In the remainder, we also use the inverse lumping map $\invlumap$, which maps every $\lu{x}$ in $\lustsp$ to a subset
\(
	\invlumap\gr{\lu{x}}
	\coloneqq \set*{ x \in \stsp \given \lumap\gr{x} = \lu{x} }
	% \text{ for all $\lu{x}$ in $\lustsp$}.
\) of \(\stsp\).
Given such a lumping map $\lumap$, a function $f$ in $\setoffn\gr{\stsp}$ is \emph{lumpable with respect to $\lumap$} if
%it is constant on the ``lumps'', in the sense that
there is an $\lu{f}$ in $\setoffn\gr{\lustsp}$ such that $f\gr{x} = \lu{f}\gr{\lumap\gr{x}}$ for all $x$ in $\stsp$.
We use $\setoffn_{\lumap}\gr{\stsp} \subseteq \setoffn\gr{\stsp}$ to denote the set of all real-valued functions on $\stsp$ that are lumpable with respect to $\lumap$.

As far as our results are concerned, it does not matter in which way the states are lumped.
For a given $f$ in $\setoffn\gr{\stsp}$---recall that we are interested in the (limit) expectation of $f(X_t)$---a naive choice is to lump together all states that have the same image under $f$.
However, this is not necessarily a good choice.
One reason is that the resulting lumped state space can become very small, for example when $f$ is an indicator, resulting in too much imprecision in the dynamics and/or the inference.
Lumping-based methods therefore often let $\lustsp$ correspond to a natural higher-level description of the state of the system; see for example \cite{2018Erreygers,2005Buchholz,1994FraceschinisMuntz} for some positive results.
An extra benefit of this approach is that the resulting model can be used to determine the (limit) expectation of multiple functions.
% For example, using such a lumping \num{8008} states are reduced to only \num{125} states for the largest system reported in \cite[Tab.~3]{1994FraceschinisMuntz}, while \num{1221759} states are reduced to \num{35301} lumps for the largest system reported in \cite[Tab.~1]{2018Erreygers}.

	\subsection{The Lumped Stochastic Process}
Let \smash{$\gr{X_t}_{t \in \nnegreals{}}$} be a positive and homogeneous continuous-time Markov chain.
% Then any lumping map $\lumap \colon \stsp \to \lustsp$ unequivocally induces the \emph{lumped stochastic process} that has $\lustsp$ as state space:
% \[
% 	\gr{\lu{X}_t}_{t \in \nnegreals{}}
% 	\coloneqq \gr{\lumap\gr{X_t}}_{t \in \nnegreals{}}.
% \]
Then any lumping map \smash{$\lumap \colon \stsp \to \lustsp$} unequivocally induces a \emph{lumped stochastic process}~$\gr{\lu{X}_t}_{t \in \nnegreals{}}$.
It has $\lustsp$ as state space and is defined by the relation
\begin{equation}
\label{eqn:Naive definition of lumped process}
	\gr{\lu{X}_t = \lu{x}}
	\Leftrightarrow \gr{X_t \in \invlumap\gr{\lu{x}}}
	\text{ for all $t$ in $\nnegreals$ and all $\lu{x}$ in $\lustsp$}.
\end{equation}

In some cases, this lumped stochastic process is a homogeneous CTMC, and the inference of interest can then be computed using this reduced CTMC.
See for example \cite[Theorem~2.3(i)]{1993BallYeo} for a necessary condition and \cite[Theorem~2.4]{1993BallYeo} or \cite[Theorem~3]{1958Burke} for a necessary and sufficient one.
However, these conditions are very stringent.
Indeed, in general, the lumped stochastic process is not homogeneous nor Markov.
For this general case, we are not aware of any previous work that characterises the dynamics of the lumped stochastic process efficiently---i.e., directly from $\lumap$, $\trm$ and $\inidist$ and without ever determining $\tm_t$.

	\subsection{The Induced Imprecise Continuous-Time Markov Chain}
\label{ssec:Induced imprecise CTMC}
Nevertheless, that is exactly what we now set out to do.
\iftoggle{arxiv}{
	We here only provide an intuitive explanation of our methodology; for a detailed exposition, we refer to Appendix~\ref{app:Induced iCTMC}.
}{
	Due to length constraints, we will here restrict ourselves to providing an intuitive explanation of our methodology, becoming formal only when stating our main results; see Theorems~\ref{the:Bounds on marginal expectation} and \ref{the:iterative approximation} further on.
	For a detailed exposition, we refer to the appendix of the extended preprint of this contribution \cite{extended}.
}

The essential point is that, while we cannot exactly determine the dynamics of the lumped stochastic process~$\gr{\lu{X}_t}_{t \in \nnegreals}$, we can consider a \emph{set of possible stochastic processes}, not necessarily homogeneous and/or Markovian but all with \smash{$\lustsp$} as state space, that definitely contains the lumped stochastic process~$\gr{\lu{X}_t}_{t \in \nnegreals}$.
In the remainder, we will denote this set by $\setofconsproc_{\inidist, \trm, \lumap}$.
As is indicated by our notation, $\setofconsproc_{\inidist, \trm, \lumap}$ is fully characterised by $\inidist$, $\trm$ and $\lumap$.

Crucially, it turns out that $\setofconsproc_{\inidist, \trm, \lumap}$ takes the form of a so-called \emph{imprecise continuous-time Markov chain}.
For a formal definition of general imprecise CTMCs, and an extensive study of their properties, we refer the reader to the work of Krak~et.~al.~\cite{2017KrakDeBock} and De Bock~\cite{2016DeBock}.
For our present purposes, it suffices to know that tight lower and upper bounds on the expectations that correspond to the set of stochastic processes of an imprecise CTMC are relatively easy to obtain.
In particular, they can be determined without having to explicitly optimise over this set of processes, thus mitigating the need to actually construct it.

There are many parallels between homogeneous CTMCs and imprecise CTMCs.
For instance, the counterpart of a transition rate matrix is a \emph{lower transition rate operator}.
For our imprecise CTMC~$\setofconsproc_{\inidist, \trm, \lumap}$, this lower transition rate operator is \smash{$\lultro \colon \setoffn\gr{\lustsp} \to \setoffn\gr{\lustsp} \colon g \mapsto \lultro g$} where, for every $g$ in \smash{$\setoffn\gr{\lustsp}$}, \smash{$\lultro g$} is defined by
\begin{equation}
\label{eqn:lumped ltro}
	[\lultro g]\gr{\lu{x}}
	\coloneqq \min \set*{ \sum_{\lu{y} \in \lustsp} g\gr{\lu{y}} \sum_{y \in \invlumap\gr{\lu{y}}} Q(x, y) \colon x \in \invlumap\gr{\lu{x}} }
	\text{ for all $\lu{x}$ in $\lustsp$}.
\end{equation}
Important to mention here is that in case the lumped state space corresponds to some higher-order state description, we often find that executing the optimisation in \eqref{eqn:lumped ltro} is fairly straightforward, as is for instance observed in \cite{2017Rottondi,2018Erreygers}.

The counterpart of the transition matrix over $t$ is now the \emph{lower transition operator over $t$}, denoted by $\lulto_t \colon \setoffn\gr{\lustsp} \to \setoffn\gr{\lustsp}$ and defined for all $g$ in $\setoffn\gr{\lustsp}$ by
\begin{equation}
\label{eqn:lumped lower transition operator}
	\lulto_t g
	\coloneqq \lim_{n \to +\infty} \gr*{I + \frac{t}{n} \lultro}^{n} g,
\end{equation}
where the $n$-th power should be interpreted as consecutively applying the operator $n$ times.
Note how strikingly \eqref{eqn:lumped lower transition operator} resembles \eqref{eqn:Matrix exponential}.
% Note that $\lulto_t \colon \setoffn\gr{\lustsp} \to \setoffn\gr{\lustsp}$ is a non-linear operator that is non-negatively homogeneous and that dominates the minimum.
Analogous to the precise case, one needs numerical methods---see for instance \cite{2017Erreygers} or \cite[Sect.~8.2]{2017KrakDeBock}---to approximate $\lulto_t g$ because analytically evaluating the limit in \eqref{eqn:lumped lower transition operator} is, at least in general, impossible.

	\section{Performing Inferences Using The Lumped Process}
\label{sec:Performing inferences using the lumped process}
\iftoggle{arxiv}{
	Everything is now set up to present our main results; see Appendix~\ref{app:Proofs of main results} for their proofs.
}{
	Everything is now set up to present our main results.
	Due to length constraints, we have relegated our proofs to the appendix of the extended arXiv version of this contribution \cite{extended}.
	% If we want to use conjugacy
	% Before stating our main results, the proofs of which we have relegated to the appendix of the extended arXiv version of this contribution \cite{extended} due to length constraints, we introduce two operators for notational convenience:
	% $\luutro \colon \setoffn\gr{\lustsp} \to \setoffn\gr{\lustsp}$ and $\luuto_t \colon \setoffn\gr{\lustsp} \to \setoffn\gr{\lustsp}$.
	% These are defined for all $t$ in $\nnegreals$ and $g$ in $\lustsp$ by the conjugacy relations $\luutro g \coloneqq - \lultro \gr{- g}$ and $\luuto_t g \coloneqq - \lulto_t \gr{- g}$.
}

	\subsection{Guaranteed Bounds On Marginal Expectations}
We first turn to marginal expectations.
Once we have $\setofconsproc_{\inidist, \trm, \lumap}$, the following result is a---not quite immediate---consequence of \cite[Corollary~8.3]{2017KrakDeBock}.
\begin{theorem}
\label{the:Bounds on marginal expectation}
	Consider a positive and irreducible CTMC characterised by $(\stsp, \inidist, \trm)$ and a lumping map $\lumap \colon \stsp \to \lustsp$.
	Let $f$ in $\setoffn\gr{\stsp}$ be lumpable with respect to $\lumap$ and let $\lu{f}$ be the corresponding element of $\setoffn\gr{\lustsp}$.
	Then for any $t$ in $\nnegreals$,
	\[
		\luinidist \lulto_t \lu{f}
		\leq \prev\gr{f\gr{X_t}}
		= \inidist \tm_t f
		\leq
		% \luinidist \luuto_t \lu{f} =
		- \luinidist \lulto_t \gr{- \lu{f}},
	\]
	where $\luinidist$ in $\setofposdist\gr{\lustsp}$ is defined by $\luinidist\gr{\lu{x}} \coloneqq \sum_{x \in \invlumap\gr{\lu{x}}} \inidist\gr{x}$ for all $\lu{x}$ in $\lustsp$.
\end{theorem}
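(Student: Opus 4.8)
The plan is to reduce the statement to a single lower bound, which I then obtain from the theory of imprecise CTMCs of \cite{2017KrakDeBock}. The central equality $\prev\gr{f\gr{X_t}} = \inidist \tm_t f$ is just the identity $\prevm\gr{f\gr{X_t}} = \inidist \tm_t f$ recalled at the end of Sect.~\ref{ssec:Homogeneous CTMCs}, so only the two outer inequalities need an argument. By conjugacy of the lower transition operator it in fact suffices to prove the lower bound $\luinidist \lulto_t \lu{f} \leq \prev\gr{f\gr{X_t}}$ for every $f$ in $\setoffn_{\lumap}\gr{\stsp}$: applying this to $-f$, which is again lumpable with corresponding function $-\lu{f}$ and satisfies $\prev\gr{\gr{-f}\gr{X_t}} = -\prev\gr{f\gr{X_t}}$, and rearranging yields the upper bound $\prev\gr{f\gr{X_t}} \leq - \luinidist \lulto_t \gr{-\lu{f}}$.

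For the lower bound I would first use lumpability to rewrite the marginal expectation in terms of the lumped stochastic process $\gr{\lu{X}_t}_{t \in \nnegreals}$. Since $f\gr{x} = \lu{f}\gr{\lumap\gr{x}}$ for all $x$ in $\stsp$, the defining relation \eqref{eqn:Naive definition of lumped process} gives $\lu{X}_t = \lumap\gr{X_t}$ and hence $f\gr{X_t} = \lu{f}\gr{\lu{X}_t}$, so that $\prev\gr{f\gr{X_t}}$ coincides with the expectation of $\lu{f}\gr{\lu{X}_t}$ with respect to the lumped process. A short computation combining \eqref{eqn:HCTMC initial} with \eqref{eqn:Naive definition of lumped process} also shows that the lumped process has $\luinidist$ as its initial distribution, in agreement with the formula in the statement.

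Next I would invoke the construction of $\setofconsproc_{\inidist, \trm, \lumap}$ carried out in the appendix: the lumped stochastic process is an element of $\setofconsproc_{\inidist, \trm, \lumap}$, and $\setofconsproc_{\inidist, \trm, \lumap}$ is an imprecise continuous-time Markov chain whose initial credal set is the singleton $\set{\luinidist}$ and whose lower transition rate operator is $\lultro$ as in \eqref{eqn:lumped ltro}. Because the lumped process belongs to $\setofconsproc_{\inidist, \trm, \lumap}$, its expectation of $\lu{f}\gr{\lu{X}_t}$ is bounded below by the lower expectation of $\lu{f}\gr{\lu{X}_t}$ associated with this imprecise CTMC, and \cite[Corollary~8.3]{2017KrakDeBock} identifies that lower expectation as $\luinidist \lulto_t \lu{f}$, with $\lulto_t$ given by \eqref{eqn:lumped lower transition operator}. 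Chaining these relations yields $\luinidist \lulto_t \lu{f} \leq \prev\gr{f\gr{X_t}}$, and the conjugacy argument of the first paragraph then completes the proof.

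The main obstacle---and the reason the result is only a ``not quite immediate'' consequence of \cite[Corollary~8.3]{2017KrakDeBock}---is the identification of $\setofconsproc_{\inidist, \trm, \lumap}$ as an imprecise CTMC of the type to which \cite[Corollary~8.3]{2017KrakDeBock} applies. Concretely, one has to check that the pointwise minimisation in \eqref{eqn:lumped ltro} really defines a lower transition rate operator on $\setoffn\gr{\lustsp}$: this holds because it is the lower envelope of the nonempty, finite---hence bounded---set of transition rate matrices on $\lustsp$ that, for every $\lu{x}$ in $\lustsp$, assign to their $\lu{x}$-th row one of the vectors $\lu{y} \mapsto \sum_{y \in \invlumap\gr{\lu{y}}} \trm\gr{x, y}$ with $x$ ranging over $\invlumap\gr{\lu{x}}$, each of which is a valid transition rate matrix row since $\trm$ has non-negative off-diagonal entries and rows summing to zero; consequently $\lultro$ inherits constant-additivity, non-negative homogeneity, super-additivity and maps the zero function to the zero function. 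One must also note that, since $\inidist$ is precise, the corresponding initial credal set is the singleton $\set{\luinidist}$, so the bound of \cite[Corollary~8.3]{2017KrakDeBock} takes the stated form with $\luinidist$ rather than a general initial distribution. Once these identifications are in place---this being exactly the content of the appendix---the rest of the argument is routine bookkeeping.
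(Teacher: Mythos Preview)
Your approach is essentially the one the paper takes: rewrite $\prev\gr{f\gr{X_t}}$ as the expectation of $\lu{f}\gr{\lu{X}_t}$ under the lumped process, use that $\problum \in \setofconsproc_{\inidist, \trm, \lumap}$ to bound this from below by the corresponding imprecise lower expectation, evaluate the latter via the machinery of \cite{2017KrakDeBock}, and obtain the upper bound by conjugacy. There is, however, one imprecision worth flagging.

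\cite[Corollary~8.3]{2017KrakDeBock} does not directly yield the unconditional quantity $\luinidist \lulto_t \lu{f}$; it gives the \emph{conditional} lower expectation $\underline{\prev}\gr{\lu{f}\gr{\lu{X}_t} \cond \lu{X}_0 = \lu{x}} = [\lulto_t \lu{f}]\gr{\lu{x}}$, and it requires the set of rate matrices to have separately specified rows. The paper therefore also invokes the imprecise law of iterated expectation \cite[Theorem~6.5]{2017KrakDeBock} (which needs convexity of the rate-matrix set) and then \cite[Proposition~9.3]{2017KrakDeBock} to pass from the conditional formula to $\luinidist \lulto_t \lu{f}$ using that the initial credal set is the singleton $\set{\luinidist}$. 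Your finite ``vertex'' set of rate matrices has the same lower envelope $\lultro$ and trivially has separately specified rows, but it is not convex; the paper instead works with the set $\setoflutrm = \set{\lutrm_{\dist} \colon \dist \in \setofposdist\gr{\stsp}}$ and verifies convexity and separately specified rows for it. Either way one needs both structural properties, and the two extra citations, to close the argument.
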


This result is highly useful in the setting that was outlined in Sect.~\ref{sec:Lumping and the lumped stochastic processs}.
Indeed, for large systems we can use Theorem~\ref{the:Bounds on marginal expectation} to compute guaranteed lower and upper bounds on marginal expectations that cannot be computed exactly.

	\subsection{Guaranteed Bounds on Limit Expectations}
\label{ssec:Iterative computation of bounds on lumped limit expectation}
Our second result provides guaranteed lower and upper bounds on limit expectations.
This is extremely useful because the limit expectation is (almost surely) equal to the long-term temporal average due to the ergodic theorem \cite[Theorem~3.8.1]{1997Norris}, and in practice---see for instance \cite{2018Erreygers}---the inference one is interested in is often a long-term temporal average.
\begin{theorem}
\label{the:iterative approximation}
	Consider an irreducible CTMC and a lumping map $\lumap \colon \stsp \to \lustsp$.
	Let $f$ in $\setoffn\gr{\stsp}$ be lumpable with respect to $\lumap$ and let $\lu{f}$ be the corresponding element of $\setoffn\gr{\lustsp}$.
	Then for all $n$ in $\natz$ and $\delta$ in $\posreals$ such that $\delta \max \set*{ \abs{\trm\gr{x, x}} \colon x \in \stsp } < 1$,
	\begin{equation*}
		\min \gr{I + \delta \lultro}^{n} \lu{f}
		\leq \limprevm\gr{f}
		\leq
		% \max \gr{I + \delta \lultro}^n \lu{f} =
		- \min \gr{I + \delta \lultro}^{n} \gr{-\lu{f}}.
	\end{equation*}
	Furthermore, for fixed $\delta$, the lower and upper bounds in this expression become monotonously tighter with increasing $n$, and each converges to a (possibly different) constant as $n$ approaches $+ \infty$.
\end{theorem}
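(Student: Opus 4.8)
The plan is to prove, in turn, that the displayed inequalities hold for every valid $\delta$ and every $n$, that the two bounds are monotone in $n$, and that each converges; essentially all the work lies in the first part. Abbreviate $\underline{S} \coloneqq I + \delta\lultro$, so that the claimed lower bound is $\min\underline{S}^{n}\lu{f}$, and for $g$ in $\setoffn\gr{\lustsp}$ write $g\circ\lumap$ for its lift to $\setoffn\gr{\stsp}$, defined by $\gr{g\circ\lumap}\gr{x}\coloneqq g\gr{\lumap\gr{x}}$. The first thing I would record is that \eqref{eqn:lumped ltro} can be rewritten as
\[
	[\underline{S} g]\gr{\lu{x}} = \min\set*{ [\gr{I+\delta\trm}\gr{g\circ\lumap}]\gr{x} \given x\in\invlumap\gr{\lu{x}} }
	\quad\text{for all $\lu{x}$ in $\lustsp$,}
\]
which follows by pulling the ($x$-independent) term $g\gr{\lu{x}}=\gr{g\circ\lumap}\gr{x}$ inside the minimum. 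Here $I+\delta\trm$ is an honest stochastic matrix: its off-diagonal entries $\delta\trm(x,y)$ are non-negative, and its diagonal entries $1+\delta\trm(x,x)=1-\delta\abs{\trm(x,x)}$ are positive because $\delta\max\set*{\abs{\trm(x,x)}\given x\in\stsp}<1$; in particular $I+\delta\trm$ is monotone and fixes constant functions.

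The crux is the pointwise domination $\gr{\underline{S}^{n}\lu{f}}\circ\lumap \leq \gr{I+\delta\trm}^{n} f$ on $\stsp$, for every $n$ in $\natz$, which I would establish by induction on $n$. The base case $n=0$ is lumpability, $\lu{f}\circ\lumap=f$. For the step, the rewritten \eqref{eqn:lumped ltro} applied to $g=\underline{S}^{n}\lu{f}$ gives, for any $x$ in $\stsp$, $[\underline{S}^{n+1}\lu{f}]\gr{\lumap\gr{x}}\leq[\gr{I+\delta\trm}\gr{\gr{\underline{S}^{n}\lu{f}}\circ\lumap}]\gr{x}$ — since the minimum over $\invlumap\gr{\lumap\gr{x}}$ is at most its value at $x$ — and then monotonicity of $I+\delta\trm$ together with the induction hypothesis bounds the right-hand side by $[\gr{I+\delta\trm}^{n+1}f]\gr{x}$. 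Granting this, validity of the lower bound is obtained by averaging against the unique stationary distribution $\limdist$: because $\limdist\trm=0$ we have $\limdist\gr{I+\delta\trm}^{n}=\limdist$, so
\[
	\min\underline{S}^{n}\lu{f} \leq \sum_{x\in\stsp}\limdist\gr{x}\,[\underline{S}^{n}\lu{f}]\gr{\lumap\gr{x}} \leq \sum_{x\in\stsp}\limdist\gr{x}\,[\gr{I+\delta\trm}^{n}f]\gr{x} = \limdist f = \limprevm\gr{f},
\]
where the first inequality holds because regrouping the middle sum over the sets $\invlumap\gr{\lu{x}}$ exhibits it as a convex combination of the values of $\underline{S}^{n}\lu{f}$. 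Applying this to $-f$ (lumpable, with lumped version $-\lu{f}$) yields $\limprevm\gr{f}\leq-\min\underline{S}^{n}\gr{-\lu{f}}$, the upper bound.

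For monotonicity it suffices to show $\min\underline{S} g\geq\min g$ for every $g$ in $\setoffn\gr{\lustsp}$, since then $\min\underline{S}^{n+1}\lu{f}=\min\underline{S}\gr{\underline{S}^{n}\lu{f}}\geq\min\underline{S}^{n}\lu{f}$, and the same applied to $-\lu{f}$ makes $-\min\underline{S}^{n}\gr{-\lu{f}}$ non-increasing. Writing $c\coloneqq\min g$ and $g'\coloneqq g-c\indic\geq0$, and using $\trm\indic=0$ to replace $g$ by $g'$ inside $\lultro$, the rewritten identity gives $[\underline{S} g]\gr{\lu{x}}-c=g'\gr{\lu{x}}+\delta\min\set*{[\trm\gr{g'\circ\lumap}]\gr{x}\given x\in\invlumap\gr{\lu{x}}}$; since $g'\circ\lumap\geq0$ and the off-diagonal entries of $\trm$ are non-negative, $[\trm\gr{g'\circ\lumap}]\gr{x}\geq\trm(x,x)\,g'\gr{\lu{x}}=-\abs{\trm(x,x)}\,g'\gr{\lu{x}}$ for $x\in\invlumap\gr{\lu{x}}$, whence $[\underline{S} g]\gr{\lu{x}}-c\geq g'\gr{\lu{x}}\bigl(1-\delta\max\set*{\abs{\trm(x,x)}\given x\in\stsp}\bigr)\geq0$ by the hypothesis on $\delta$. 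Convergence is then immediate: $\min\underline{S}^{n}\lu{f}$ is non-decreasing in $n$ and, by validity, bounded above by $\limprevm\gr{f}$, hence converges; symmetrically $-\min\underline{S}^{n}\gr{-\lu{f}}$ is non-increasing and bounded below by $\limprevm\gr{f}$, hence converges, possibly to a different limit.

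I expect the main obstacle to be getting the inductive domination step exactly right — matching up the lift $g\circ\lumap$, the minimisation in \eqref{eqn:lumped ltro}, and the monotonicity of $I+\delta\trm$ — after which both validity (via $\limdist$) and the monotonicity and convergence claims are short. It is worth noting that this route never invokes ergodicity or uniqueness properties of the lumped imprecise chain, nor Theorem~\ref{the:Bounds on marginal expectation}: we only ever use that $\limprevm\gr{f}=\limdist f$ with $\limdist\trm=0$, and we make no claim that the two limiting constants coincide with $\limprevm\gr{f}$ (in general they need not).
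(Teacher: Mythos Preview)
Your proof is correct and takes a genuinely different route from the paper's. The paper works inside the imprecise-CTMC framework: it constructs the set $\setoflutrm$ of lumped transition rate matrices, singles out the particular element $\lutrm_\infty\coloneqq\lutrm_{\limdist}$ induced by the stationary distribution, verifies that the lumped distribution $\lulimdist$ is stationary for $\lutrm_\infty$, and then sandwiches $\gr{I+\delta\lultro}^n\lu{f}\leq\gr{I+\delta\lutrm_\infty}^n\lu{f}$ using that $\lultro$ is the lower envelope of $\setoflutrm$; monotonicity and convergence are obtained by invoking irreducibility and ergodicity of $\lultro$ (via results from \cite{2016DeBock,2017Erreygers}). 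Your argument, by contrast, never leaves the original chain: the key observation that $[\underline{S}g]\gr{\lu{x}}=\min\set{[\gr{I+\delta\trm}\gr{g\circ\lumap}]\gr{x}\colon x\in\invlumap\gr{\lu{x}}}$ lets you prove the pointwise domination $\gr{\underline{S}^n\lu{f}}\circ\lumap\leq\gr{I+\delta\trm}^n f$ by a clean induction, after which averaging against $\limdist$ yields the bound directly; monotonicity is proved by an elementary estimate, and convergence then follows from bounded monotonicity rather than from ergodicity of $\lultro$. What the paper's approach buys is that it slots into a more general statement (their Proposition~\ref{prop:More general bounds on limit expectation}) covering non-lumpable $f$ via $\lu{f}_L$ and $\lu{f}_U$, and it makes explicit the connection to the induced imprecise CTMC that motivates the whole paper. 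What your approach buys is self-containment: it needs none of the machinery of Appendix~\ref{app:iCTMCs} or Appendix~\ref{app:Induced iCTMC}---no $\setoflutrm$, no lower-envelope theory, no ergodicity results for lower transition rate operators---and would be accessible to a reader who knows only the precise theory.
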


This result can be used to devise an approximation method similar to \cite[Algorithm~1]{2018Erreygers}: we fix some value for $\delta$, set $g_0 = \lu{f}$ (or $g_0 = - \lu{f}$) and then repeatedly compute $g_{i} \coloneqq \gr{I + \delta \lultro} g_{i-1} = g_{i-1} + \delta \ltro g_{i-1}$ until we empirically observe convergence of $\min g_i$ (or $- \min g_i$).
In general, the lower and upper bounds obtained in this way are dependent on the choice of $\delta$ and this choice can therefore influence the tightness of the obtained bounds.
Empirically, we have seen that smaller $\delta$ tend to yield tighter bounds, at the expense of requiring more iterations---that is, larger $n$---before empirical convergence.

\subsection{Some Preliminary Numerical Results}
Due to length constraints, we leave the numerical assessment of Theorem~\ref{the:Bounds on marginal expectation} for future work.
For an extensive numerical assessment of---the method implied by---Theorem~\ref{the:iterative approximation}, we refer the reader to \cite{2018Erreygers}.
% , where we used an algorithm that is very similar to that outlined in Sect.~\ref{ssec:Iterative computation of bounds on lumped limit expectation}.
We believe that in this contribution, it is more fitting to compare our method to the only existing method---at least the only one that we are aware of---that also uses lumping to provide guaranteed lower and upper bounds on limit expectations.
This method was first outlined by Franceschinis and Muntz~\cite{1994FraceschinisMuntz}, and then later improved by Buchholz~\cite{2005Buchholz}.
In order to display the benefit of their methods, they use them to determine bounds on several performance measures for a closed queueing network that consists of a single server in series with multiple parallel servers.
We use the method outlined in Sect.~\ref{ssec:Iterative computation of bounds on lumped limit expectation} to also compute bounds on these performance measures, as reported in Table~\ref{tab:Comparison of limits}.
Note that our bounds are tighter than those of \cite{1994FraceschinisMuntz}.
We would very much like to compare our method with the improved method of \cite{2005Buchholz} as well.
Unfortunately, the system parameters Buchholz uses do not---as far as we can tell---correspond to the number of states and the values for the performance measures he reports in \cite[Fig.~3]{2005Buchholz}, thus preventing us from comparing our results.

\begin{table}[th]
	\centering
	\caption{%
		Comparison of the bounds obtained by using Theorem~\ref{the:iterative approximation} with those obtained by the method presented in \cite[Sect.~3.2]{1994FraceschinisMuntz} for the closed queueing network of \cite{1994FraceschinisMuntz}.
	}
	\label{tab:Comparison of limits}
	\begin{tabular}{rS[table-format=2.5]S[table-format=2.5]S[table-format=2.5]S[table-format=2.5]S[table-format=2.5]}
		\toprule
			& & \multicolumn{2}{c}{\cite[Tab.~1]{1994FraceschinisMuntz}} & \multicolumn{2}{c}{Theorem~\ref{the:iterative approximation}} \\
		\cmidrule(lr){3-4}\cmidrule(l){5-6}
			 & {Exact} & {Lower} & {Upper} & {Lower} & {Upper} \\
		\cmidrule(lr){2-2}\cmidrule(lr){3-3}\cmidrule(lr){4-4}\cmidrule(lr){5-5}\cmidrule(l){6-6}
			Mean queue length & 1.2734 & 1.2507 & 1.3859 & 1.2664 & 1.2802 \\
			Throughput & 0.9828 & 0.9676 & 0.9835 & 0.9826 & 0.9831  \\
		\bottomrule
	\end{tabular}
\end{table}

	\section{Conclusion}
Broadly speaking, the conclusion of this contribution is that imprecise CTMCs are not only a robust uncertainty model---as they were originally intended to be---but also a useful computational tool for determining bounds on inferences for large-scale CTMCs.
More concretely, the first important observation of this contribution is that lumping states in a homogeneous CTMC inevitably introduces imprecision, in the sense that we cannot exactly determine the parameters that describe the dynamics of the lumped stochastic process without also explicitly determining the original process.
The second is that we can easily characterise a set of processes that definitely contains the lumped process, in the form of an imprecise CTMC.
Using this imprecise CTMC, we can then determine guaranteed lower and upper bounds on marginal and limit expectations with respect to the original chain.
From a practical point of view, these results are helpful in cases where state space explosion occurs: they allow us to determine guaranteed lower and upper bounds on inferences that we otherwise could not determine at all.

Regarding future work, we envision the following.
For starters, a more thorough numerical assessment of the methods outlined in Sect.~\ref{sec:Performing inferences using the lumped process} is necessary.
Furthermore, it would be of theoretical as well as practical interest to determine  bounds on the \emph{conditional} expectation of a lumpable function, or to consider functions that depend on the state at \emph{multiple} time points.
Finally, we are developing a method to determine lower and upper bounds on limit expectations that only requires the solution of a simple linear program.
% ; it would be interesting to compare its performance to the method of Sect.~\ref{ssec:Iterative computation of bounds on lumped limit expectation}.

\vspace{-.25em}
	\section*{Acknowledgements}
Jasper De Bock's research was partially funded by H2020-MSCA-ITN-2016 UTOPIAE, grant agreement 722734.
Furthermore, the authors are grateful to the reviewers for their constructive feedback and useful suggestions.
% Jasper De Bock's research was partially funded by the European Commission's H2020 programme, through the UTOPIAE Marie Curie Innovative Training Network, H2020-MSCA-ITN-2016, Grant Agreement number 722734.

\iftoggle{arxiv}{
	\renewcommand{\doi}[1]{doi:\href{https://doi.org/#1}{#1}}
	\providecommand{\arxid}[2]{arXiv:\href{https:arxiv.org/abs/#1}{#1} [#2]}
	\bibliography{biblio.bib}
}{
	% \vspace{-.25em}
	\providecommand{\url}[1]{\texttt{#1}}
	\providecommand{\urlprefix}{URL }
	\providecommand{\doi}[1]{https://doi.org/#1}
	\providecommand{\arxid}[2]{arXiv:\href{https:arxiv.org/abs/#1}{#1}}
	% \bibliography{biblio.bib}
	
}

\iftoggle{arxiv}{
\appendix

% 	\section*{Appendix}
% Coming soon

	\section{Some Notation}
We often rely on results from Krak et~al.~\cite{2017KrakDeBock} when proving our results.
In order to facilitate the use of these results, we try to adhere to the notation used in \cite{2017KrakDeBock} as much as possible.
For instance, throughout this appendix we will denote the original CTMC by $\prob$ instead of by $\gr{X_t}_{t \in \nnegreals{}}$, which we previously used in the main text.

	\subsection{Sequences of Time Points}
\label{sapp:Sequences of time points}
A finite number of time points $t_{1}, \dots, t_{n}$ in $\nnegreals$ is always taken to be increasing, in the sense that $t_{1} < \cdots < t_{n}$.
Following Krak et~al.~\cite{2017KrakDeBock}, we collect all such sequences---including the empty sequence $\emptytseq$---in the set $\setoftseq$, and denote a generic element of this set by $u$.
Furthermore, we denote the set of all time sequences without the empty sequence by $\setoftseqne$, and for all $t$ in $\nnegreals$ use $\setoftseql{t}$ (or $\setoftseqnel{t}$) to denote the set of all (non-empty) time sequences of which the last time point strictly precedes $t$.
Moreover, for any sequence $u = t_{1}, \dots, t_{n}$ in $\setoftseq$, we define $\stsp_u \coloneqq \prod_{i=1}^{n} \stsp$ and we use $x_u$ to elegantly denote a generic $n$-tuple $(x_{t_1}, \dots, x_{t_n})$ in $\stsp_u$.
For the empty sequence $\emptytseq$, we have that $x_{\emptytseq}$ is equal to the empty tuple~$\emptytup$ and that $\stsp_{\emptytseq}=\{ \emptytup \}$.

We will sometimes need to concatenate two increasing sequences of finite time points, for instance $u$ and $v$ in $\setoftseq{}$.
Since $u$ and $v$ can be identified with sets, we let $u \cup v$ denote their concatenation, taken to be their ordered union.
Finally, for any sequence $u = t_0, \dots, t_n$ in $\setoftseq{}$, we let $\max u \coloneqq \max \set{ t_i \colon i \in \set{1, \dots, n} }$, which, due to our convention that $u$ is increasing, is equal to $t_n$.
If $u$ is the empty sequence, then statements like ``$\max u < \cdot$'' are taken to be vacuously true.

	\subsection{Indicators, Operators and Norms}
Consider any non-empty finite set $S$, and collect all real-valued functions on $S$ in $\setoffn\gr{S}$.
An often-used type of function in $\setoffn\gr{S}$ is the \emph{indicator} of some subset $A \subseteq S$, denoted by $\indica{A}$ and defined by $\indica{A}\gr{x} \coloneqq 1$ if $x$ is an element of $S$ and $\indica{A}\gr{x} \coloneqq 0$ otherwise.
In order not to obfuscate the notation too much, for all $x$ in $S$, we write $\indica{x}$ instead of $\indica{\set{x}}$.

We now turn to operators on $\setoffn\gr{S}$.
Let $M$ be an operator with domain $\setoffn\gr{S}$ and range $\setoffn\gr{S}$.
Then $M$ is \emph{non-negatively homogeneous} if, for all $f$ in $\setoffn\gr{S}$ and all $\lambda$ in $\nnegreals{}$, $M \gr{\lambda f} = \lambda M f$.
Such operators play an important role in (imprecise) CTMCs.
Examples of non-negatively homogeneous operators that we have seen so far are $I$, $\trm$, $\tm_t$, $\lultro$ and $\lulto_t$.
If furthermore $M \gr{f + g} = M f + M g$ for all $f, g$ in $\setoffn\gr{S}$, then $M$ is \emph{linear}.
It is well-known that linear operators can be represented by matrices; previously encountered examples are $I$, $\trm$ and $\tm_t$.
% We collect all transition rate matrices that map $\setoffn\gr{S}$ to $\setoffn\gr{S}$ in $\setofalltrm\gr{S}$.

We bestow $\setoffn\gr{S}$ with the maximum norm:
\[
	\norm{f}
	\coloneqq \max \set{\abs{f\gr{x}} \colon x \in S}
	\text{ for all $f$ in $\setoffn\gr{S}$.}
\]
The maximum norm on $\setoffn\gr{S}$ induces an operator norm for non-negatively homogeneous operators:
\[
	\norm{M}
	\coloneqq \sup \set{ \norm{M f} \colon f \in \setoffn\gr{S}, \norm{f} = 1 }.
\]
% It is well-known that if $A$ is a linear operator, then
% \[
% 	\norm{A}
% 	= \max \set*{ \sum_{y \in \stsp} \abs{A\gr{x,y}} \colon x \in \stsp }.
% \]
Finally, we turn to transition rate matrices, i.e., matrices with non-negative off-diagonal elements and rows that sum to zero.
We use $\setofalltrm\gr{S}$ to denote the set of all transition rate matrices that map $\setoffn\gr{S}$ to $\setoffn\gr{S}$.
It is well-known that, for all $\trm$ in $\setofalltrm\gr{S}$,
\begin{equation}
\label{eqn:Norm of trm}
	\norm{\trm}
	= 2 \max \set*{ \abs{\trm\gr{s, s}} \colon s \in S }
	= 2 \max \set*{ -[\trm \indica{s}]\gr{s} \colon s \in S }.
\end{equation}

	\section{Extra Material for Sect.~\ref{sec:CTMCs}}
Recall from Sect.~\ref{ssec:Homogeneous CTMCs} that we can consider stochastic processes in both the classical measure-theoretic framework and the full conditional framework.
For the former, we refer to \cite[Sect.~1.1]{1991Anderson} and references therein.
Since the latter is the approach that is introduced and followed by Krak et.~al.~\cite{2017KrakDeBock}, it will be the approach that we will follow here.
Therefore, we here briefly recall the notation, terminology and results from \cite[Sects.~4 and 5]{2017KrakDeBock} that we need in the remainder: we discuss coherent conditional probabilities in Sect.~\ref{sapp:Coherent conditional probabilities}, explain how stochastic processes are coherent conditional probabilities with a specific domain in Sect.~\ref{sapp:Stochastic processes} and treat the special case of homogeneous CTMCs in Sect.\ref{sapp:Precise CTMCs}.

	\subsection{Coherent Conditional Probabilities}
\label{sapp:Coherent conditional probabilities}
Fix some non-empty set $S$ called the \emph{outcome space}.
For this outcome space $S$, we let $\ccpevents{}\gr{S}$ denote the set of all subsets of $S$, and furthermore let $\ccpevents{}\gr{S}_{\emptyset} \coloneqq \elementaryevents{}\gr{S} \setminus \emptyset$.
The following definition is one of the most elementary and essential ones that we will need throughout the remainder.
\begin{definition}
\label{def:Coherent conditional probability}
	Let $S$ be a non-empty set and $\prob$ a real-valued map from $\ccpdomain \subseteq \ccpevents\gr{S} \times \ccpevents\gr{S}_{\emptyset}$ to $\reals$.
	Then $\probm$ is a \emph{coherent conditional probability} if, for all $n$ in $\nats$, $\gr{A_1, C_1}$, $\dots$, $\gr{A_n, C_n}$ in $\ccpdomain$ and $\lambda_1, \dots, \lambda_n$ in $\reals$,
	\[
		\max \set*{ \sum_{i=1}^{n}\lambda_i \indica{C_i}\gr{s} \gr*{\prob\gr{A_i \cond C_i} - \indica{A_i}\gr{s}} \colon s \in \cup_{i=1}^{n} C_i}
		\geq 0.
	\]
\end{definition}
\begin{lemma}[Theorem~4 in \cite{1985Regazzini}]
\label{lem:Coherent contional probability can be extended}
	Let $S$ be a non-empty set.
	If $\prob$ is a coherent conditional probability on $\ccpdomain \subseteq \ccpevents\gr{S} \times \ccpevents\gr{S}_{\emptyset}$, then for any $\ccpdomain^{\star}$ such that $\ccpdomain{} \subseteq \ccpdomain^{\star} \subseteq \ccpevents\gr{S} \times \ccpevents\gr{S}_{\emptyset}$, $\prob$ can be extended to a coherent conditional probability $\prob^{\star}$ on $\ccpdomain^{\star}$, in the sense that $\prob^{*}\gr{A \cond C} = \prob\gr{A \cond C}$ for all $\gr{A, C} \in \ccpdomain{}$.
\end{lemma}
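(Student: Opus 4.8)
Since this is Theorem~4 of \cite{1985Regazzini}, we only sketch a proof. The plan is to obtain the full extension from a single \emph{one-step} extension via a standard Zorn's-lemma argument, and then to establish that one-step extension directly from the defining inequality of Definition~\ref{def:Coherent conditional probability}. For the first part, consider the collection of all pairs $(\ccpdomain', \prob')$ with $\ccpdomain \subseteq \ccpdomain' \subseteq \ccpdomain^{\star}$ and $\prob'$ a coherent conditional probability on $\ccpdomain'$ that agrees with $\prob$ on $\ccpdomain$, partially ordered by declaring $(\ccpdomain_1, \prob_1) \preceq (\ccpdomain_2, \prob_2)$ whenever $\ccpdomain_1 \subseteq \ccpdomain_2$ and $\prob_2$ agrees with $\prob_1$ on $\ccpdomain_1$. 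This collection is nonempty, since it contains $(\ccpdomain, \prob)$, and every chain in it has an upper bound, obtained by taking the union of the domains together with the common extension of the maps along the chain: this upper bound is coherent because the inequality in Definition~\ref{def:Coherent conditional probability} only ever involves finitely many pairs $(A_i, C_i)$, and any such finite subcollection already lies in a single member of the chain. Zorn's lemma then yields a maximal element $(\ccpdomain^{\star\star}, \prob^{\star})$; it remains to argue that $\ccpdomain^{\star\star} = \ccpdomain^{\star}$, for if not we may pick some $(A_0, C_0)$ in $\ccpdomain^{\star} \setminus \ccpdomain^{\star\star}$, and coherently extending $\prob^{\star}$ to $\ccpdomain^{\star\star} \cup \set{(A_0, C_0)}$ would contradict maximality.

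Thus everything reduces to the one-step extension: given a coherent $\prob'$ on $\ccpdomain'$ and some $(A_0, C_0)$ not in $\ccpdomain'$, we must produce a $p$ in $\reals$ such that setting $\prob''(A_0 \cond C_0) \coloneqq p$ keeps the map coherent. To this end we negate the inequality of Definition~\ref{def:Coherent conditional probability} for $\prob''$: the value $p$ fails to yield a coherent $\prob''$ exactly when there are $n$ in $\nats$, pairs $(A_1, C_1), \dots, (A_n, C_n)$ in $\ccpdomain'$ and reals $\lambda_0, \lambda_1, \dots, \lambda_n$ such that
\[
	\lambda_0 \indica{C_0}(s)\, \bigl(p - \indica{A_0}(s)\bigr) + \sum_{i=1}^{n} \lambda_i \indica{C_i}(s)\, \bigl(\prob'\gr{A_i \cond C_i} - \indica{A_i}(s)\bigr) < 0
\]
for every $s$ in $C_0 \cup \bigcup_{i=1}^{n} C_i$. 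If $\lambda_0 = 0$ this contradicts the coherence of $\prob'$, so $\lambda_0 \neq 0$, and after rescaling we may take $\lambda_0 \in \set{1, -1}$. The case $\lambda_0 = 1$ forces $p \geq \underline{p}$ for a quantity $\underline{p}$ obtained by optimising over the ``gamble'' $\sum_{i=1}^{n} \lambda_i \indica{C_i}(\cdot)\bigl(\prob'\gr{A_i \cond C_i} - \indica{A_i}(\cdot)\bigr)$ and over $s$ in $C_0$, and symmetrically the case $\lambda_0 = -1$ forces $p \leq \overline{p}$. Hence the set of admissible values of $p$ is exactly the interval $[\underline{p}, \overline{p}]$, and choosing the trivial (empty) gamble in these definitions gives $0 \leq \underline{p}$ and $\overline{p} \leq 1$; any $p$ in $[\underline{p}, \overline{p}]$ then works, provided this interval is nonempty.

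The genuine content here---as opposed to the bookkeeping above---is the inequality $\underline{p} \leq \overline{p}$, and I expect this to be the main obstacle. To prove it, suppose $\underline{p} > \overline{p}$; then there are a gamble $g$ witnessing the bound $p \geq \underline{p}$ (from the $\lambda_0 = 1$ case) and a gamble $g'$ witnessing $p \leq \overline{p}$ (from the $\lambda_0 = -1$ case) such that $g(s) + g'(s) < 0$ for all $s$ in $C_0$, while $g$ and $g'$ are each strictly negative on the parts of their conditioning events lying outside $C_0$. Adding $g$ and $g'$, the $(A_0, C_0)$-contributions cancel, and one is left with a single combination of assessments from $\ccpdomain'$ that is strictly negative on the union of all conditioning events involved, contradicting the coherence of $\prob'$ on $\ccpdomain'$. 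The delicate points in carrying this out are keeping track of which $s$ lie in which conditioning events (the maximum in Definition~\ref{def:Coherent conditional probability} ranges over a union, and on the complement of $C_0$ the $\lambda_0$-terms vanish, so strict negativity must be checked there as well) and verifying that the optima defining $\underline{p}$ and $\overline{p}$ are attained---which is immediate, since for any fixed finite subcollection the relevant quantities are continuous in finitely many real parameters and, after normalisation, the optimisation may be restricted to a compact set.
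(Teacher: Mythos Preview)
The paper does not actually prove this lemma; it is simply cited as Theorem~4 of \cite{1985Regazzini} with no argument supplied. So there is no ``paper's own proof'' to compare against, and your sketch goes well beyond what the paper does.

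Your approach---reduce to a one-step extension via Zorn's lemma, then show the interval of admissible values for the new assessment is nonempty by exploiting coherence of the original map---is the standard route to this kind of extension theorem (it is essentially de~Finetti's fundamental theorem of prevision adapted to the conditional setting, in the spirit of Regazzini's original argument). The structure is sound. Two minor points: first, your bookkeeping around which sign of $\lambda_0$ produces $\underline{p}$ versus $\overline{p}$ is a bit tangled, though the idea is clear; second, the claim that the optima defining $\underline{p}$ and $\overline{p}$ are attained is not obviously true (you are optimising over all finite subcollections of $\ccpdomain'$ and all real coefficients, which is not compact), but fortunately it is also not needed---for the contradiction argument establishing $\underline{p} \leq \overline{p}$, near-optimal witnesses suffice, and the admissible interval being nonempty only requires $\underline{p} \leq \overline{p}$, not that either endpoint is achieved.
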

\begin{lemma}[Corollary~4.3 in \cite{2017KrakDeBock}]
\label{lem:Coherent conditional probability if and only if it can be extended}
	Let $S$ be a non-empty set.
	Then $\prob$ is a coherent conditional probability on $\ccpdomain \subseteq \ccpevents\gr{S} \times \ccpevents\gr{S}_{\emptyset}$ if and only if it can be extended to a coherent conditional probability on $\ccpevents\gr{S} \times \ccpevents\gr{S}_{\emptyset}$.
\end{lemma}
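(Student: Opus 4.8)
The plan is to prove the two implications of the equivalence separately; essentially all of the real work has already been isolated in the preceding lemma.

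For the ``only if'' direction, I would assume that $\probm$ is a coherent conditional probability on $\ccpdomain$ and simply invoke Lemma~\ref{lem:Coherent contional probability can be extended} with the choice $\ccpdomain^{\star} = \ccpevents\gr{S} \times \ccpevents\gr{S}_{\emptyset}$. Since trivially $\ccpdomain \subseteq \ccpevents\gr{S} \times \ccpevents\gr{S}_{\emptyset}$, this is a legitimate instantiation, and it directly produces a coherent conditional probability $\probm^{\star}$ on $\ccpevents\gr{S} \times \ccpevents\gr{S}_{\emptyset}$ with $\probm^{\star}\gr{A \cond C} = \probm\gr{A \cond C}$ for all $\gr{A, C}$ in $\ccpdomain$. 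So this half is nothing more than a special case of the cited extension theorem of Regazzini~\cite{1985Regazzini}.

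For the ``if'' direction, I would assume that $\probm$ extends to a coherent conditional probability $\probm^{\star}$ on $\ccpevents\gr{S} \times \ccpevents\gr{S}_{\emptyset}$ and verify Definition~\ref{def:Coherent conditional probability} for $\probm$ on $\ccpdomain$ by hand. Fix any $n$ in $\nats$, any $\gr{A_1, C_1}, \dots, \gr{A_n, C_n}$ in $\ccpdomain$ and any $\lambda_1, \dots, \lambda_n$ in $\reals$. Because $\ccpdomain \subseteq \ccpevents\gr{S} \times \ccpevents\gr{S}_{\emptyset}$, each pair $\gr{A_i, C_i}$ lies in the domain of $\probm^{\star}$, and since $\probm^{\star}$ agrees with $\probm$ there we may replace every $\probm\gr{A_i \cond C_i}$ by $\probm^{\star}\gr{A_i \cond C_i}$. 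The inequality that Definition~\ref{def:Coherent conditional probability} demands of $\probm$ then becomes literally the inequality that $\probm^{\star}$ satisfies, applied to this same $n$, the same pairs, the same coefficients, and the same index set $\cup_{i=1}^{n} C_i$ in the maximum. Hence the inequality holds and $\probm$ is coherent on $\ccpdomain$.

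In short, the ``if'' direction is just the (routine) remark that restricting a coherent conditional probability to a sub-domain leaves it coherent, and the ``only if'' direction is the genuinely non-trivial extension statement, which I would not reprove but borrow via Lemma~\ref{lem:Coherent contional probability can be extended}. I do not anticipate any real obstacle; the only thing to be careful about is the bookkeeping in the ``if'' direction --- making sure the outcome space $S$ and the index set $\cup_{i=1}^{n} C_i$ appearing in the coherence conditions for $\probm$ and for $\probm^{\star}$ are literally identical, which they are because exactly the same finite list of pairs is used on both sides.
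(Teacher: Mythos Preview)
Your argument is correct. The paper does not supply its own proof of this lemma; it is stated with a citation to \cite[Corollary~4.3]{2017KrakDeBock} and used as a black box. Your two-direction argument---invoking Lemma~\ref{lem:Coherent contional probability can be extended} (Regazzini's extension theorem) for the ``only if'' half and checking that coherence is inherited under restriction for the ``if'' half---is exactly the standard route and matches how the result is obtained in the cited reference.
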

Definition~\ref{def:Coherent conditional probability} might seem rather abstract on first encounter, but our motivation for using it is the following result.
\begin{lemma}[(5)--(8) in \cite{1985Regazzini}]
\label{lem:Coherent conditional probability satisfies the laws of probability}
	Let $S$ be a non-empty set.
	If $\probm$ is a coherent conditional probability on $\ccpdomain \subseteq \ccpevents\gr{S} \times \ccpevents\gr{S}_{\emptyset}$, then for all $(A, C)$, $\gr{B, C}$ and $\gr{D, C}$ in $\ccpdomain{}$ such that $\gr{A, D \cap C}$ is in $\ccpdomain$,
	\begin{enumerate}[label=\upshape{(\roman*)}, leftmargin=*]
		\item $\probm\gr{A \cond C} \geq 0$;
		\item $\probm\gr{A \cond C} = 1$ if $C \subseteq A$;
		\item $\probm\gr{A \cup B \cond C} = \probm\gr{A \cond C} + \probm\gr{B \cond C}$ if $A \cap B = \emptyset$;
		\item \label{LOP:Bayes rule} $\probm\gr{A \cap D \cond C} = \probm\gr{A \cond D \cap C} \probm\gr{D \cond C}$.
	\end{enumerate}
\end{lemma}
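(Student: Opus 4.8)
The plan is to derive all four laws straight from the single inequality in Definition~\ref{def:Coherent conditional probability} by substituting well-chosen data $\gr{n, (A_i, C_i), \lambda_i}$ into it. Since that inequality is one-sided, each of the equalities (iii) and (iv) will be obtained by running the argument once with a multiplier vector $\lambda$ and once with $-\lambda$. Before starting, I would invoke Lemma~\ref{lem:Coherent conditional probability if and only if it can be extended} to replace $\probm$ by a coherent extension to all of $\ccpevents\gr{S} \times \ccpevents\gr{S}_{\emptyset}$; this leaves every value on the original domain unchanged but makes $\probm\gr{A \cup B \cond C}$ and $\probm\gr{A \cap D \cond C}$ well-defined, which is implicitly needed for (iii) and (iv). Throughout, I will use that when all conditioning events among the chosen pairs lie inside a fixed event $C$, the maximum in Definition~\ref{def:Coherent conditional probability} ranges over $C$ and the prefactor $\indica{C}\gr{s}$ equals $1$ there.

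For (i) and (ii) I would take $n = 1$ with the single pair $\gr{A, C}$. With $\lambda_1 = 1$ the inequality becomes $\max \set{\probm\gr{A \cond C} - \indica{A}\gr{s} \colon s \in C} \geq 0$, so some $s \in C$ satisfies $\probm\gr{A \cond C} \geq \indica{A}\gr{s} \geq 0$, giving (i); if moreover $C \subseteq A$, then $\indica{A}\gr{s} = 1$ for all $s \in C$, so this yields $\probm\gr{A \cond C} \geq 1$, while $\lambda_1 = -1$ yields $\probm\gr{A \cond C} \leq 1$, giving (ii). For (iii) I would take $n = 3$ with the pairs $\gr{A \cup B, C}$, $\gr{A, C}$, $\gr{B, C}$ and multipliers $\gr{1, -1, -1}$: because $A \cap B = \emptyset$ we have $\indica{A \cup B} = \indica{A} + \indica{B}$, so for every $s \in C$ the indicator contributions cancel and the summand reduces to the constant $\probm\gr{A \cup B \cond C} - \probm\gr{A \cond C} - \probm\gr{B \cond C}$; the inequality says this is $\geq 0$, and the multipliers $\gr{-1, 1, 1}$ give the reverse.

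The real work is (iv). I would take $n = 3$ with the pairs $\gr{A \cap D, C}$, $\gr{A, D \cap C}$, $\gr{D, C}$; their conditioning events are $C$, $D \cap C$ and $C$, whose union is $C$, and on $C$ one has $\indica{D \cap C}\gr{s} = \indica{D}\gr{s}$ and $\indica{A \cap D}\gr{s} = \indica{A}\gr{s}\indica{D}\gr{s}$. Writing the summand at $s \in C$ as a function of $\indica{A}\gr{s}, \indica{D}\gr{s} \in \set{0, 1}$, it carries a bilinear term $\indica{A}\gr{s}\indica{D}\gr{s}$ coming from $\indica{A \cap D}$, and the only way to make it constant in $s$ is to choose the multipliers in terms of the probabilities themselves: $\lambda = \gr*{\mu,\, -\mu,\, -\mu\,\probm\gr{A \cond D \cap C}}$ makes the coefficients of $\indica{D}\gr{s}$ and of $\indica{A}\gr{s}\indica{D}\gr{s}$ both vanish, leaving the $s$-independent value $\mu \bigl( \probm\gr{A \cap D \cond C} - \probm\gr{A \cond D \cap C}\,\probm\gr{D \cond C} \bigr)$. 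Taking $\mu = 1$ and $\mu = -1$ then forces this quantity to $0$, which is exactly \ref{LOP:Bayes rule}. I expect this single observation---finding the probability-dependent linear combination that annihilates the bilinear indicator term---to be the only genuine obstacle; (i)--(iii) are then routine instances of the same scheme.
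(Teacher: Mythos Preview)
Your derivation is correct. The paper does not give its own proof of this lemma; it simply quotes the result from Regazzini~\cite{1985Regazzini} (items (5)--(8) there), so there is no in-paper argument to compare your approach against.

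Your route---plugging carefully chosen data into the coherence inequality of Definition~\ref{def:Coherent conditional probability} and, for the equalities, running it twice with opposite signs---is exactly the standard way these laws are extracted from de~Finetti--style coherence, and it goes through as you describe. The only substantive step is indeed (iv): the choice $\lambda = \gr*{\mu,\,-\mu,\,-\mu\,\probm\gr{A \cond D \cap C}}$ kills both the $\indica{D}$ and the $\indica{A}\indica{D}$ coefficients, leaving the constant $\mu\bigl(\probm\gr{A \cap D \cond C} - \probm\gr{A \cond D \cap C}\,\probm\gr{D \cond C}\bigr)$, and the two sign choices pin it to zero. Your preliminary use of Lemma~\ref{lem:Coherent conditional probability if and only if it can be extended} is also the right move, since the statements of (iii) and (iv) tacitly require $\gr{A \cup B, C}$ and $\gr{A \cap D, C}$ to be in the domain; working on the full extension and then restricting back handles this cleanly. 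Note finally that the hypothesis $\gr{A, D \cap C} \in \ccpdomain$ guarantees $D \cap C \neq \emptyset$, so the second pair in your triple for (iv) is admissible.
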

Lemma~\ref{lem:Coherent conditional probability satisfies the laws of probability} states that a coherent conditional probability satisfies the standard laws of (conditional) probability on its domain: properties (i)--(iii) state that $\prob\gr{\cdot \cond C}$ is a probability measure, while (iv) is Bayes' rule.

	\subsection{Stochastic Processes}
\label{sapp:Stochastic processes}
Fix some finite state space $\stsp$.
We are then uncertain about what the actual path $\pad \colon \nnegreals \to \stsp$ of the system will be.
We therefore consider a set of paths $\paths$, which contains all ``feasible'' paths.
The only thing that is required of $\paths$ is that
\begin{equation}
\label{eqn:Set of feasible paths}
	\gr{\forall u \in \setoftseqne}
	\gr{\forall x_u \in \stsp_u}
	\gr{\exists \omega \in \paths}
	~\omega\gr{t}
	= x_t
	~\text{for all $t$ in $u$.}
\end{equation}
% for all $u=t_1, \dots, t_n$ in $\setoftseqne$ and $x_u = \gr{x_1, \dots, x_n}$ in $\stsp_u$, there is at least one $\omega$ in $\paths$ such that, for all $i$ in $\set{1, \dots, n}$, $\omega\gr{t_i} = x_i$.
For all $t$ in $\nnegreals$ and $x$ in $\stsp$, we then define the basic event
\[
	\gr{X_t = x}
	\coloneqq \set{\pad \in \paths \colon \pad\gr{t} = x}.
\]
Similarly, for all $u$ in $\setoftseq$ and $x_u$ in $\stsp_u$, we let
\[
	\gr{X_u = x_u}
	\coloneqq \bigcap_{t \in u} \gr{X_t = x_t}.
\]
We follow the convention that an empty intersection in expressions similar to the one above correspond to $\paths$; hence $\gr{X_u = x_u} = \paths$ if $u$ is the empty sequence $\emptytseq$.

For all $u$ in $\setoftseq$, the set of elementary events
\[
	\elementaryevents_u
	\coloneqq \set{\gr{X_t = x} \colon t \in u \cup [\max u, +\infty), x \in \stsp}
\]
induces an algebra $\eventalgebra_u \coloneqq \langle \elementaryevents_u \rangle$.
We use these algebras to define the domain
\[
	\procdomain
	\coloneqq \set{\gr{A_u, X_u = x_u} \colon u \in \setoftseq, x_u \in \stsp_u, A_u \in \eventalgebra_u},
\]
and consider maps of the form
\[
	\prob \colon \procdomain \to \reals \colon (A_u, X_u = x_u) \mapsto \prob\gr{A_u \cond X_u = x_u},
\]
where---in order to not to unnecessarily clutter our notation---we leave out the conditioning event if it is $\gr{X_\emptytseq = x_\emptytseq} = \paths$:
\[
	\probm\gr{A}
	\coloneqq \probm\gr{A \cond X_{\emptyset} = x_{\emptyset}}
	= \probm\gr{A \cond \paths}
	\quad\text{for any $A$ in $\eventalgebra_\emptytseq$.}
\]
\begin{definition}[Definition~4.3 in \cite{2017KrakDeBock}]
\label{def:Stochastic process}
	A real-valued map $\prob$ on $\procdomain$ is a \emph{stochastic process} if it is a coherent conditional probability on $\procdomain$.
\end{definition}
It immediately follows from Lemma~\ref{lem:Coherent conditional probability satisfies the laws of probability} that a stochastic process $\prob$ satisfies the laws of (conditional) probability.
Because these laws are so well-known, we will frequently use them without explicitly referring to Lemma~\ref{lem:Coherent conditional probability satisfies the laws of probability}.

	\subsection{Precise (Homogeneous) Continuous-Time Markov Chains As Special Cases}
\label{sapp:Precise CTMCs}
The following is a more formal definition of the terms introduced in Sect.~\ref{ssec:Homogeneous CTMCs}.
\begin{definition}
\label{def:HCTMC as stochastic process}
	A stochastic process~$\prob \colon \procdomain \to \reals$ is a \emph{continuous-time Markov chain} (CTMC) if, for all $t, \Delta$ in $\nnegreals$, $u$ in $\setoftseql{t}$, $x, y$ in $\stsp$ and $x_u$ in $\stsp_u$,
	\[
		\probm\gr{ X_{t+\Delta}=y \cond X_u = x_u, X_t = x }
		= \probm\gr{ X_{t+\Delta}=y \cond X_t = x }.
	\]
	This CTMC~$\probm$ is \emph{homogeneous} if furthermore
	\[
		\probm\gr{ X_{t+\Delta}=y \cond X_t = x }
		= \probm\gr{ X_{\Delta}=y \cond X_0 = x }
	\]
	for all $t, \Delta$ in $\nnegreals$ and $x, y$ in $\stsp$.
\end{definition}

Our statement in Sect.~\ref{ssec:Homogeneous CTMCs} that a homogeneous CTMC is uniquely characterised by the triplet $\gr{\stsp, \inidist, \trm}$ is justified due to the following result.
\begin{proposition}[Theorem~5.2 in \cite{2017KrakDeBock}]
\label{prop:Unique homogeneous CTMC}
	Let $\stsp$ be a state space, $\inidist$ a distribution on $\stsp$ and $\trm$ a transition rate matrix.
	Then there is a unique homogeneous CTCM~$P$ such that (i) $\probm\gr{X_0 = x} = \inidist\gr{x}$ for all $x$ in $\stsp$ and (ii)~$\probm\gr{ X_{t+\Delta} = y \cond X_t = x} = T_{\Delta}\gr{x, y}$ for all $x, y$ in $\stsp$ and $t, \Delta$ in $\nnegreals$.
\end{proposition}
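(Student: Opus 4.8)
The plan is to establish existence and uniqueness separately, working throughout inside the full conditional framework of Section~\ref{sapp:Stochastic processes}, so that both parts reduce to manipulations with the transition matrices $\tm_t = e^{t\trm}$ and the laws of probability from Lemma~\ref{lem:Coherent conditional probability satisfies the laws of probability}. For uniqueness, suppose $\probm$ and $\probm'$ are both homogeneous CTMCs on $\procdomain$ satisfying (i) and (ii). For any $u = t_1, \dots, t_n$ in $\setoftseqne$ and $x_u$ in $\stsp_u$, iterating Bayes' rule (Lemma~\ref{lem:Coherent conditional probability satisfies the laws of probability}\ref{LOP:Bayes rule}) yields the product expansion
\[
	\probm\gr{X_u = x_u}
	= \probm\gr{X_{t_1} = x_{t_1}} \prod_{i=1}^{n-1} \probm\gr{X_{t_{i+1}} = x_{t_{i+1}} \cond X_{t_1} = x_{t_1}, \dots, X_{t_i} = x_{t_i}}.
\]
The Markov property collapses the $i$-th conditional factor to $\probm\gr{X_{t_{i+1}} = x_{t_{i+1}} \cond X_{t_i} = x_{t_i}}$, which equals $\tm_{t_{i+1}-t_i}\gr{x_{t_i}, x_{t_{i+1}}}$ by (ii), while the law of total probability over $\set{\gr{X_0 = x} \given x \in \stsp}$ together with (i) and (ii) gives $\probm\gr{X_{t_1} = x_{t_1}} = \gr{\inidist \tm_{t_1}}\gr{x_{t_1}}$. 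Hence the finite-dimensional distributions of $\probm$ and $\probm'$ coincide. Repeating this computation with a fixed conditioning event $\gr{X_u = x_u}$ present throughout shows more generally that $\probm\gr{A_u \cond X_u = x_u}$ is forced for every $A_u$ in $\eventalgebra_u$: by additivity it suffices to determine $\probm\gr{X_v = x_v \cond X_u = x_u}$ for every finite increasing sequence $v$ of time points drawn from $u \cup [\max u, +\infty)$, and there the basic events at time points in $u$ are already determined by $x_u$ whereas those at later time points are again pinned down by the Markov property and (ii). Therefore $\probm = \probm'$ on $\procdomain$.

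For existence I would write down the obvious candidate and check that it works. Fix a path set $\paths$ as in \eqref{eqn:Set of feasible paths}. For $u = t_1, \dots, t_n$ in $\setoftseqne$ and $x_u$ in $\stsp_u$ set $\probm\gr{X_u = x_u} \coloneqq \gr{\inidist \tm_{t_1}}\gr{x_{t_1}} \prod_{i=1}^{n-1} \tm_{t_{i+1}-t_i}\gr{x_{t_i}, x_{t_{i+1}}}$ and extend additively to $\eventalgebra_\emptytseq$; more generally, conditionally on $\gr{X_u = x_u}$, give each basic event $\gr{X_t = x}$ with $t$ in $u$ the value $\indica{x}\gr{x_t}$ and let the restriction to the time points at or beyond $\max u$ follow the homogeneous Markov chain started in $x_{\max u}$ with transition matrices $\tm_t$. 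This specifies a real-valued map $\probm$ on $\procdomain$. Two facts make it consistent: the semigroup identity $\tm_s \tm_t = \tm_{s+t}$, immediate from the matrix-exponential form in \eqref{eqn:Matrix exponential} since $e^{s\trm} e^{t\trm} = e^{(s+t)\trm}$; and the fact that each $\tm_t$ is a stochastic matrix, i.e.\ it has non-negative entries and satisfies $\tm_t \indic = \indic$, a standard consequence of $\trm$ being a transition rate matrix. Together these imply that the product formula is insensitive to padding $u$ with extra time points and that, restricted to any finite sub-algebra, $\probm$ is a genuine finitely additive conditional probability. That $\probm$ is then a homogeneous CTMC satisfying (i) and (ii) can be read off the formula directly: each factor $\tm_{t_{i+1}-t_i}$ depends only on the immediately preceding state and on the time increment, which is precisely the Markov property together with homogeneity, and (i) and (ii) are its one- and two-time-point instances.

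The step I expect to be the main obstacle is verifying that this candidate is in fact a coherent conditional probability, i.e.\ that the defining inequality of Definition~\ref{def:Coherent conditional probability} holds for every finite linear combination of pairs from $\procdomain$. The route I would take is to observe that any such finite combination mentions only finitely many time points, so it lives inside a finite family of mutually consistent finitely additive conditional probability measures; coherence of such a family is essentially classical, and in the present setting it is cleanest to obtain it by exhibiting a single global extension to $\ccpevents\gr{\paths} \times \ccpevents\gr{\paths}_{\emptyset}$ and invoking Lemma~\ref{lem:Coherent conditional probability if and only if it can be extended}, with Lemma~\ref{lem:Coherent contional probability can be extended} handling the bookkeeping of enlarging the domain. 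Some care is needed because many conditioning events $\gr{X_u = x_u}$ have probability zero, but this is exactly where the full conditional framework pays off: the conditional measures are specified directly and consistently, so probability-zero conditioning causes no difficulty. Modulo this verification, both halves of the argument are routine.
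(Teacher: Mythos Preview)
The paper does not actually prove this proposition: it is quoted verbatim as Theorem~5.2 of \cite{2017KrakDeBock}, with no argument given here. So there is nothing in the present paper to compare your attempt against.

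That said, your sketch is a reasonable outline of how such a result is established in the full conditional framework of \cite{2017KrakDeBock}. The uniqueness half is essentially complete: the product expansion via Bayes' rule and the Markov property does pin down $\probm$ on all of $\procdomain$, and you are right that the conditioning events of probability zero cause no trouble precisely because the conditional probabilities are primitive data. For existence, your candidate is the correct one and the Chapman--Kolmogorov consistency via $\tm_s\tm_t=\tm_{s+t}$ is the right ingredient. You also correctly identify the genuine work: verifying coherence in the sense of Definition~\ref{def:Coherent conditional probability}. Your proposed route---reduce to finitely many time points, exhibit a consistent family of finitely additive conditional probabilities, then invoke Lemmas~\ref{lem:Coherent contional probability can be extended} and~\ref{lem:Coherent conditional probability if and only if it can be extended}---is exactly the strategy used in \cite{2017KrakDeBock}, though carrying it out carefully (in particular handling the interplay between the different conditioning events $\gr{X_u=x_u}$ when some have zero unconditional probability) takes several pages there and is not something one can wave through as ``routine.''
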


In this appendix, a \emph{positive and irreducible CTMC} is any stochastic process~$P$ that is a homogeneous CTMC and that is (uniquely) characterised by a positive initial distribution~$\inidist$ and an irreducible transition rate matrix~$\trm$.

	\subsection{Irreducibility}
\label{sapp:Irreducibility}
% For completeness' sake, we here repeat the definition of irreducibility given in Section~\ref{ssec:Irreducibility of homogeneous CTMCs}.
% \begin{definition}
% \label{def:Irreducibility}
% 	The transition rate matrix $\trm$ is called \emph{irreducible} if, for all $t$ in $\posreals$ and $x, y$ in $\stsp$, $\tm_t \gr{x, y} > 0$.
% \end{definition}
An easy to check necessary and sufficient condition for irreducibility is based on the accessibility relation $\cdot \acces \cdot$ \cite{1997Norris}.
We say that a state $x$ is \emph{accessible} from a state $y$ (or that $y$ \emph{leads} to $x$) if there is a sequence $y = x_{0}, x_{1} \dots, x_{n} = x$ in $\stsp$ such that $\trm\gr{x_{i-1}, x_{i}} > 0$ for all $i$ in $\set{1, \dots, n}$.
\begin{proposition}[Theorem~3.2.1 in \cite{1997Norris}]
\label{prop:Precise irreducibility}
	The transition rate matrix $\trm$ is irreducible if and only if every state is accessible from any other state.
	More formally, this condition reads
	\[
		\stsp_{\mathrm{top}}
		\coloneqq \set*{ x \in \stsp \colon (\forall y \in \stsp) y \acces x }
		= \stsp.
	\]
\end{proposition}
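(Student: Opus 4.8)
The plan is to prove the equivalence between the two characterisations of irreducibility: the one adopted in Sect.~\ref{ssec:Irreducibility of homogeneous CTMCs}, that $\tm_t\gr{x,y} > 0$ for all $t$ in $\posreals$ and all $x, y$ in $\stsp$, and the accessibility condition $\stsp_{\mathrm{top}} = \stsp$, i.e., $x \acces y$ for every pair $x, y$ in $\stsp$. It suffices to prove, for a fixed pair $x, y$, that the statement $\gr{\exists t \in \posreals}\, \tm_t\gr{x,y} > 0$ is equivalent both to $\gr{\forall t \in \posreals}\, \tm_t\gr{x,y} > 0$ and to $x \acces y$; quantifying over all pairs then yields the theorem.

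First I would uniformise. Fix $\lambda$ in $\posreals$ with $\lambda > \max\set{\abs{\trm\gr{z,z}} \colon z \in \stsp}$ and put $P \coloneqq I + \lambda^{-1}\trm$. Then $P$ is a stochastic matrix with strictly positive diagonal---its rows are non-negative since the off-diagonal entries of $\trm$ are non-negative and $P\gr{z,z} = 1 - \abs{\trm\gr{z,z}}/\lambda > 0$, and they sum to $1$ since those of $\trm$ sum to $0$---and, crucially, $P\gr{x,y} > 0$ if and only if $\trm\gr{x,y} > 0$ whenever $x \neq y$. Since $-\lambda I$ and $\lambda P$ commute and $\trm = \lambda\gr{P - I}$,
\[
	\tm_t = e^{t\trm} = e^{-\lambda t}\sum_{n=0}^{+\infty}\frac{\gr{\lambda t}^{n}}{n!}\,P^{n}
	\quad\text{for all $t$ in $\nnegreals$,}
\]
a series of non-negative matrices with strictly positive scalar coefficients for every $t$ in $\posreals$. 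Hence, for each $t$ in $\posreals$, $\tm_t\gr{x,y} > 0$ if and only if $P^{n}\gr{x,y} > 0$ for some $n$ in $\natz$; note that the right-hand side does not involve $t$, which is exactly why the existential and universal quantifications over $t$ coincide.

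It then remains to show that $P^{n}\gr{x,y} > 0$ for some $n$ in $\natz$ if and only if $x \acces y$. For the direct implication, a positive entry $P^{n}\gr{x,y}$ furnishes a sequence $x = x_{0}, x_{1}, \dots, x_{n} = y$ with $P\gr{x_{i-1},x_{i}} > 0$ for every $i$; deleting the indices at which $x_{i-1} = x_{i}$ and relinking leaves a sequence of pairwise distinct consecutive states along which $P$, and therefore also $\trm$, is strictly positive, so $x \acces y$. Conversely, a sequence witnessing $x \acces y$ has $\trm\gr{x_{i-1},x_{i}} > 0$---hence $P\gr{x_{i-1},x_{i}} > 0$---along its (necessarily distinct) consecutive states, so the product of these $P$-entries is a strictly positive lower bound for the corresponding power $P^{m}\gr{x,y}$; the empty sequence handles the case $x = y$ via $P^{0} = I$. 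Chaining the three equivalences over all pairs $x, y$ proves the proposition.

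The one genuine obstacle is the step that uniformisation is designed to overcome: one cannot read off positivity of $\tm_t\gr{x,y}$ from the power series $e^{t\trm} = \sum_{n} t^{n}\trm^{n}/n!$ directly, because $\trm$ has negative diagonal entries and so the matrices $\trm^{n}$ have entries of mixed sign, making term-by-term positivity arguments invalid. Rewriting the exponential via the stochastic matrix $P$ turns it into an honestly non-negative object, after which everything reduces to the familiar bookkeeping that a power of a non-negative matrix with positive diagonal has a positive $\gr{x,y}$-entry precisely when there is a walk from $x$ to $y$ in its support graph.
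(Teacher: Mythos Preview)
Your argument is correct. The uniformisation trick---passing to the stochastic matrix $P = I + \lambda^{-1}\trm$ with strictly positive diagonal and rewriting $\tm_t$ as a Poisson-weighted sum of the non-negative matrices $P^n$---is exactly what is needed to read off positivity of $\tm_t\gr{x,y}$ term by term, and your subsequent bookkeeping linking $P^n\gr{x,y} > 0$ to the accessibility relation is sound. One small clarification: after deleting indices with $x_{i-1} = x_i$ you obtain a sequence in which \emph{consecutive} states differ (so that $P$-positivity transfers to $\trm$-positivity), not one in which all states are pairwise distinct; your phrasing ``pairwise distinct consecutive states'' is a little ambiguous but the intended reading is fine.

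As for the comparison: the paper does not actually prove this proposition. It is stated with attribution to \cite[Theorem~3.2.1]{1997Norris} and used as an imported result, so there is no in-paper argument to match against. Your uniformisation proof is in the same spirit as Norris's treatment and would serve perfectly well as a self-contained justification had the authors chosen to include one.
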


The following lemma is our main reason for assuming that the CTMC has a positive initial distribution and an irreducible transition rate matrix.
\begin{lemma}
\label{lem:Irreducible CTMC:Any path has pos prob}
	If $\markov$ is a positive and irreducible CTMC, then for any $u = t_{1}, \dots, t_{n}$ in $\setoftseqne$ and $x_{u} = (x_{1}, \dots, x_{n})$ in $\stsp_{u}$,
	\[
		\probm\gr{X_{u} = x_{u}}
		= \probm\gr{X_{t_{1}} = x_{1}, \dots, X_{t_{n}} = x_{n}}
		> 0.
	\]
\end{lemma}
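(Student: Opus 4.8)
The plan is to prove the claim by induction on $n$, the length of the time sequence $u$, exploiting the factorisation of $\probm\gr{X_u = x_u}$ via Bayes' rule (Lemma~\ref{lem:Coherent conditional probability satisfies the laws of probability}\ref{LOP:Bayes rule}) into a product of an initial probability and successive transition probabilities, and then showing that each factor is strictly positive using the positivity of $\inidist$ and the irreducibility of $\trm$.

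First I would treat the base case $n = 1$: here $\probm\gr{X_{t_1} = x_1}$ is, by the law of total probability over $X_0$ and Bayes' rule, equal to $\sum_{x \in \stsp} \inidist\gr{x} \probm\gr{X_{t_1} = x_1 \cond X_0 = x} = \sum_{x \in \stsp} \inidist\gr{x} \tm_{t_1}\gr{x, x_1}$ when $t_1 > 0$, using \eqref{eqn:HCTMC transition}; since $\inidist$ is positive and $\tm_{t_1}\gr{x, x_1} > 0$ by irreducibility (recall our definition of irreducibility requires $\tm_t\gr{x,y} > 0$ for all $t$ in $\posreals$), this sum is strictly positive. The edge case $t_1 = 0$ reduces directly to $\inidist\gr{x_1} > 0$. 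For the induction step, I would write, using Bayes' rule,
\[
	\probm\gr{X_u = x_u}
	= \probm\gr{X_{t_n} = x_n \cond X_{t_1} = x_1, \dots, X_{t_{n-1}} = x_{n-1}} \, \probm\gr{X_{t_1} = x_1, \dots, X_{t_{n-1}} = x_{n-1}},
\]
where the second factor is strictly positive by the induction hypothesis (this requires the truncated sequence $t_1, \dots, t_{n-1}$ to again be in $\setoftseqne$, which it is). For the first factor, I would invoke the Markov property (Definition~\ref{def:HCTMC as stochastic process}) to reduce the conditioning event to $X_{t_{n-1}} = x_{n-1}$, then apply homogeneity and \eqref{eqn:HCTMC transition} to identify it with $\tm_{t_n - t_{n-1}}\gr{x_{n-1}, x_n}$, which is strictly positive since $t_n - t_{n-1} \in \posreals$ and $\trm$ is irreducible. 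Multiplying two strictly positive numbers closes the induction.

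The main obstacle I anticipate is bookkeeping in the full conditional framework rather than anything conceptually deep: one must be careful that every conditioning event appearing in the application of Bayes' rule and the Markov property actually has strictly positive probability (so that the conditional probabilities are the ``right'' ones and the factorisation is legitimate), and that all the pairs involved genuinely lie in the domain $\procdomain$ so that Lemma~\ref{lem:Coherent conditional probability satisfies the laws of probability} applies. This is exactly why the induction must carry the positivity statement itself as the hypothesis — it is what licenses conditioning on $X_{t_1} = x_1, \dots, X_{t_{n-1}} = x_{n-1}$ at the next stage. A secondary minor point is the handling of $t_1 = 0$ versus $t_1 > 0$, and more generally ensuring the argument does not implicitly assume distinct time points coincide with $0$; since the $t_i$ are strictly increasing, at most one of them equals $0$, so only the first factor in the factorisation ever needs the initial-distribution case, and all subsequent gaps $t_i - t_{i-1}$ are strictly positive.
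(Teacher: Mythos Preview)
Your proposal is correct and follows essentially the same approach as the paper: both arguments factor $\probm\gr{X_u = x_u}$ via Bayes' rule and the Markov property into an initial probability times a product of transition probabilities $\tm_{t_i - t_{i-1}}\gr{x_{i-1}, x_i}$, and then invoke positivity of $\inidist$ and irreducibility of $\trm$ (with the same $t_1 = 0$ versus $t_1 > 0$ case split). The only difference is cosmetic---you structure it as an explicit induction on $n$, whereas the paper unrolls the factorisation in one line---and your worry about needing positive conditioning events is slightly over-cautious in the coherent conditional probability framework, where Lemma~\ref{lem:Coherent conditional probability satisfies the laws of probability}\ref{LOP:Bayes rule} requires only that the relevant pairs lie in $\procdomain$, not that the conditioning events have positive mass.
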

\begin{proof}
	Repeated application of Bayes' rule and the Markov property \eqref{eqn:HCTMC Markov} yields
	\begin{multline*}
		\probm\gr{ X_{u} = x_{u} }
		= \probm\gr{X_{t_{1}} = x_{1}, \dots, X_{t_{n}} = x_{n}} \\
		= \sum_{x_{0} \in \stsp}
		\probm\gr{X_{0} = x_{0}}
		\probm\gr{X_{t_1} = x_{1} \cond X_{0} = x_{0}} \prod_{i=2}^{n} \probm\gr{X_{t_i} = x_{i} \cond X_{t_{i-1}} = x_{i-1}}.
	\end{multline*}
	We now use \eqref{eqn:HCTMC initial} and \eqref{eqn:HCTMC transition} to obtain
	\begin{equation}
	\label{eqn:Probability of state assignment as sum of products}
		\probm\gr{ X_{u} = x_{u} }
		= \sum_{x_{0} \in \stsp}
		\inidist\gr{x_{0}}
		\tm_{t_{1}}\gr{x_0, x_1} \prod_{i=2}^{n} \tm_{\gr{t_i - t_{i-1}}}\gr{x_{i-1}, x_{i}}.
	\end{equation}
	% Recall from Definition~\ref{def:Irreducibility}
	As $\trm$ is irreducible, $\tm_{t}\gr{x, y}$ is positive for all $t$ in $\posreals$ and all $x, y$ in $\stsp$.
	Hence, all terms in the product on the right hand side of \eqref{eqn:Probability of state assignment as sum of products} are positive.
	We now distinguish two cases: $t_{1} > 0$ and $t_{1} = 0$.
	In the first case, it again follows from the irreducibility that all $\tm_{t_{1}}\gr{x_0, x_1}$ are positive.
	In the second case, $\tm_{t_{1}}\gr{x_0, x_1}$ is zero if $x_0 \neq x_1$ and $1$ if $x_0 = x_1$.
	Furthermore, $\inidist\gr{x_0} > 0$ for all $x_0$ in $\stsp$ by assumption.
	The stated now follows by observing that at least one of the terms in the sum is a product of positive real numbers and therefore positive itself, and that the other terms are non-negative.
	\qed
\end{proof}

We will also need the following properties.
The first one is essentially well-known, but we could not immediately find a good reference for it.
\begin{lemma}
\label{lem:I + delta trm is tm}
	Let $\trm$ be a transition rate matrix and $\delta$ in $\posreals$ such that $\delta \norm{\trm} < 2$.
	Then $\gr{I + \delta \trm}$ is a transition matrix.
	If $\trm$ is furthermore irreducible, then $\gr{I + \delta \trm}$ is aperiodic and irreducible in the sense of \cite{1997Norris}.
\end{lemma}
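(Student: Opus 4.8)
The plan is to verify the defining properties of a transition matrix directly from the defining properties of a transition rate matrix, and then to read off aperiodicity and irreducibility from the structure of $\gr{I + \delta \trm}$. Write $P \coloneqq I + \delta \trm$. For the first claim I would check that $P$ is stochastic: its rows must be non-negative and sum to one. The row sums are immediate, since each row of $\trm$ sums to zero, so each row of $P$ sums to $1 + \delta \cdot 0 = 1$. For non-negativity, the off-diagonal entries $P\gr{x, y} = \delta \trm\gr{x, y}$ with $x \neq y$ are non-negative because $\trm$ has non-negative off-diagonal entries and $\delta > 0$; the diagonal entries are $P\gr{x, x} = 1 + \delta \trm\gr{x, x} = 1 - \delta \abs{\trm\gr{x, x}}$, which is non-negative precisely when $\delta \abs{\trm\gr{x, x}} \leq 1$ for all $x$. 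Here the hypothesis $\delta \norm{\trm} < 2$ does the work: by \eqref{eqn:Norm of trm}, $\norm{\trm} = 2 \max \set*{ \abs{\trm\gr{x, x}} \colon x \in \stsp }$, so $\delta \norm{\trm} < 2$ gives $\delta \abs{\trm\gr{x, x}} < 1$ for every $x$, and in fact strictly, which is what we will want for aperiodicity.

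For the second claim, suppose $\trm$ is irreducible. I would first establish irreducibility of $P$ via the accessibility characterisation in Proposition~\ref{prop:Precise irreducibility}: for $x \neq y$, $P\gr{x, y} = \delta \trm\gr{x, y} > 0$ exactly when $\trm\gr{x, y} > 0$, so $\trm$ and $P$ induce the same one-step accessibility, hence the same accessibility relation $\acces$ on $\stsp$; since every state is accessible from every other under $\trm$, the same holds under $P$, so $P$ is irreducible. For aperiodicity it suffices to exhibit one state with a self-loop: from the strict inequality above, $P\gr{x, x} = 1 - \delta \abs{\trm\gr{x, x}} > 0$ for every $x$ (in particular for at least one $x$, which is all that is needed for an irreducible chain), so the period of that state is $1$, and by irreducibility all states have period $1$. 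Thus $P$ is aperiodic and irreducible in the sense of \cite{1997Norris}.

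I do not expect any serious obstacle here; the only point that requires a moment's care is the bookkeeping between the operator-norm condition $\delta \norm{\trm} < 2$ and the entrywise condition $\delta \abs{\trm\gr{x,x}} < 1$, which is precisely bridged by the identity \eqref{eqn:Norm of trm}. A secondary subtlety worth stating explicitly is that aperiodicity and irreducibility for the discrete-time chain $P$ are genuinely about the zero pattern of $P$, so one should be careful to note that multiplying off-diagonal entries by the positive constant $\delta$ does not change which entries are zero; this is what makes the accessibility relations of $\trm$ and $P$ coincide. Everything else is a direct verification.
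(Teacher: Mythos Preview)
Your proposal is correct and follows essentially the same approach as the paper. The only difference is that the paper defers to external references (Stewart~2009) for the verification that $I+\delta\trm$ is a transition matrix and for aperiodicity, whereas you spell out these direct elementary arguments explicitly; the irreducibility argument via Proposition~\ref{prop:Precise irreducibility} is identical.
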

\begin{proof}
	Fix any $\delta$ in $\posreals$ such that $\delta \norm{\trm} < 2$.
	It can then be immediately verified---see for instance \cite[p.~289]{2009Stewart}---that $\tm$ is a transition matrix.

	That $\tm$ is irreducible follows from the irreducibility of $\trm$.
	Recall from Proposition~\ref{prop:Precise irreducibility} that the irreducibility of $\trm$ implies that for any $x, y$ in $\stsp$ such that $x \neq y$, there is a sequence $y=x_0, \dots, x_n$ in $\stsp$ such that $\trm\gr{x_{i-1}, x_i} > 0$ for all $i$ in $\set{1, \dots, n}$.
	Clearly, this implies that, for all $i$ in $\set{1, \dots, n}$,
	\[
		\tm\gr{x_{i-1}, x_i} = \gr{I + \delta \trm}\gr{x_{i-1}, x_i} = I\gr{x_{i-1}, x_i} + \delta \trm\gr{x_{i-1}, x_i} > 0.
	\]
	From \cite[Theorem~1.2.1]{1997Norris}, it follows that $y$ leads to $x$ with respect to $\tm$.
	Since any two distinct states are communicating with respect to $\tm$, we conclude that $\tm$ is irreducible.

	That $\tm$ is aperiodic follows from \cite[p.~304]{2009Stewart}.
	\qed
\end{proof}
\begin{lemma}
\label{lem:Lower and upper bound for limit expectation}
	If $\trm$ is an irreducible transition rate matrix, then for all $f$ in $\setoffn\gr{\stsp}$, $\delta$ in $\posreals$ such that $\delta \norm{\trm} < 2$ and $n$ in $\natz{}$,
	\[
		\min \gr{I + \delta \trm}^n f
		\leq \limdist{} f,
		% \leq \max \gr{I + \delta \trm}^n f,
	\]
	where $\limdist{}$ is the stationary distribution of $\trm$.
\end{lemma}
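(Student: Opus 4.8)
The plan is to reduce the statement to a near-trivial observation about invariant distributions of the discrete-time transition matrix $\tm \coloneqq I + \delta \trm$. First I would invoke Lemma~\ref{lem:I + delta trm is tm} to note that, since $\delta \norm{\trm} < 2$, $\tm$ is a transition matrix; moreover, since $\trm$ is irreducible, there is (by the discussion in Sect.~\ref{ssec:Irreducibility of homogeneous CTMCs}) a unique stationary distribution $\limdist$ in $\setofposdist\gr{\stsp}$, characterised by $\limdist \trm = 0$.

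The key step is to observe that $\limdist$ is also invariant for $\tm$: indeed $\limdist \tm = \limdist\gr{I + \delta \trm} = \limdist + \delta \limdist \trm = \limdist$, and hence, by an immediate induction on $n$, $\limdist \tm^{n} = \limdist$ for all $n$ in $\natz$. Consequently $\limdist f = \gr{\limdist \tm^{n}} f = \limdist\gr{\tm^{n} f}$, where in the last equality we simply read the product as the row vector $\limdist$ acting on the column vector $\tm^{n} f = \gr{I + \delta \trm}^{n} f$.

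Finally, since $\limdist$ is a probability mass function on $\stsp$---that is, $\limdist\gr{x} \geq 0$ for all $x$ in $\stsp$ and $\sum_{x \in \stsp} \limdist\gr{x} = 1$---the quantity $\limdist g = \sum_{x \in \stsp} \limdist\gr{x} g\gr{x}$ is a convex combination of the components of $g$, so that $\min g \leq \limdist g$ for every $g$ in $\setoffn\gr{\stsp}$. Applying this with $g = \tm^{n} f = \gr{I + \delta \trm}^{n} f$ and combining with the identity $\limdist f = \limdist\gr{\tm^{n} f}$ obtained above yields $\min\gr{I + \delta \trm}^{n} f \leq \limdist\gr{\tm^{n} f} = \limdist f$, as desired.

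There is no genuine obstacle here; the only points requiring a modicum of care are the bookkeeping that $\limdist$ is truly $\tm$-invariant (which hinges on $\limdist \trm = 0$) and the observation that acting on the left with a probability vector can only increase the minimum. Note that the full strength of Lemma~\ref{lem:I + delta trm is tm} (aperiodicity and irreducibility of $\tm$) is not needed for this one-sided bound: irreducibility of $\trm$ enters only to guarantee that a stationary distribution $\limdist$ exists and is well defined.
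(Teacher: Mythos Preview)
Your argument is correct, and in fact more direct than the paper's. The paper also sets $\tm \coloneqq I + \delta\trm$ and checks that $\limdist$ is $\tm$-invariant, but then proceeds differently: it invokes the full force of Lemma~\ref{lem:I + delta trm is tm} (that $\tm$ is aperiodic and irreducible) together with \cite[Theorem~1.8.3]{1997Norris} to conclude that $\tm^{n} f \to \limdist f$ pointwise, and separately shows that $\set{\min \tm^{n} f}_{n\in\natz}$ is non-decreasing via the inequality $\min g \leq \min \tm g$; the bound then follows from monotone convergence. Your route bypasses both the convergence theorem and the monotonicity argument by noting the one-line identity $\limdist f = \limdist\gr{\tm^{n} f}$ and then using $\min g \leq \limdist g$. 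As you observe, this needs only that $\limdist$ exists and is $\tm$-invariant, not that $\tm$ is aperiodic or irreducible. The paper's longer detour does yield the extra information that $\min \tm^{n} f$ increases monotonically to $\limdist f$, but that is not part of the statement of the lemma and is established separately (in the imprecise setting) in Corollary~\ref{cor:lto skeleton of irreducible ltro converges} anyway.
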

\begin{proof}
	Fix any $\delta$ in $\posreals$ such that $\delta \norm{\trm} < 2$, and let $\tm \coloneqq I + \delta \trm$.
	We recall from Lemma~\ref{lem:I + delta trm is tm} that $\tm$ is an aperiodic and irreducible transition matrix.
	Furthermore, from the equilibrium condition $\limdist \trm = 0$ it follows that
	\[
		\limdist \tm
		= \limdist \gr{I + \delta \trm}
		= \limdist + \delta \limdist \trm = \limdist.
	\]
	Since $\limdist$ is an invariant distribution for the aperiodic and irreducible transition matrix $\tm$, it follows from \cite[Theorem~1.8.3]{1997Norris} that $\lim_{n \to +\infty} [\tm^n f](x) = \limdist f$ for all $f$ in $\setoffn\gr{\stsp}$ and $x$ in $\stsp$.

	Fix now any $f$ in $\setoffn\gr{\stsp}$ and consider the sequence
	\[
		\{ \min \gr{I + \delta \trm}^n f  \}_{n \in \natz}
		= \{ \min \tm^n f  \}_{n \in \natz}.
	\]
	From the previous, we know that this sequence converges to $\limdist f$ in the limit for $n \to +\infty$.
	Since $\tm$ is a transition matrix (a matrix with non-negative elements and rows that sum to $1$), it clearly holds that $\min g \leq \min \tm g$ for all $g$ in $\setoffn\gr{\stsp}$.
	It now follows from repeated application of this inequality that the sequence $\{ \min \tm^n f  \}_{n \in \natz}$ is non-decreasing, which proves the stated.
	\qed
\end{proof}

	\section{Imprecise Continuous-Time Markov Chains: A Brief Summary}
\label{app:iCTMCs}
In this supplementary section, we briefly introduce the notation, terminology and results concerning imprecise CTMCs \cite{2017KrakDeBock,2016DeBock,2015Skulj} that we will need in the remainder.

	\subsection{Sets of Consistent Processes and Lower Expectations}
\label{sapp:Sets of consistent processes and lower expectations}
In general, the main idea behind imprecise CTMCs is to consider a set of stochastic process instead of a single stochastic process.
In particular, Krak et.~al.~\cite{2017KrakDeBock} focus on three nested sets of processes, all characterised by a non-empty set of initial distributions $\credset$ and a non-empty bounded set of transition rate matrices~$\setoftrm{} \subseteq \setofalltrm\gr{\stsp}$.
More specifically, they collect in $\setofconsproc_{\setoftrm, \credset}^{\mathrm{W}}$ all stochastic processes that are: (i) well-behaved, a technical condition \cite[Definition~4.4]{2017KrakDeBock}; (ii) consistent with $\setoftrm$, in the sense that at all times the ``instantaneous transition rate matrix'' is contained in $\setoftrm$ \cite[Definition~6.1]{2017KrakDeBock}; and (iii) consistent with $\credset$, in the sense that $\credset$ contains the initial distribution \cite[Definition~6.2]{2017KrakDeBock}.
Similarly, $\setofconsproc_{\setoftrm, \credset}^{\mathrm{WM}}$ ($\setofconsproc_{\setoftrm, \credset}^{\mathrm{WHM}}$) contains all well-behaved (homogeneous) Markov processes that are consistent with $\setoftrm$ and $\credset$.
These sets are clearly nested, in the sense that
\begin{equation}
\label{eqn:Nested sets of consistent processes}
	\setofconsproc_{\setoftrm, \credset}^{\mathrm{WHM}}
	\subseteq \setofconsproc_{\setoftrm, \credset}^{\mathrm{WM}}
	\subseteq \setofconsproc_{\setoftrm, \credset}^{\mathrm{W}}.
\end{equation}

Using these sets of consistent  stochastic processes, Krak et.~al.~\cite{2017KrakDeBock} construct lower (and conjugate upper) expectations as follows.
For any non-empty set of initial distributions $\credset$ and non-empty bounded set of transition rate matrices~$\setoftrm$, they let
\[
	\underline{\prev}_{\setoftrm, \credset}^{\mathrm{W}}\gr{ \cdot \cond \cdot}
	\coloneqq \inf \set{ \prev_\prob\gr{ \cdot \cond \cdot} \colon \prob \in \setofconsproc_{\setoftrm, \credset}^{\mathrm{W}} },
\]
where $\prev_\prob$ denotes the expectation with respect to the process $\prob$, and similarly for $\underline{\prev}_{\setoftrm, \credset}^{\mathrm{WM}}$ and $\underline{\prev}_{\setoftrm, \credset}^{\mathrm{WHM}}$.
It is now intuitively clear from \eqref{eqn:Nested sets of consistent processes} that
\[
	\underline{\prev}_{\setoftrm, \credset}^{\mathrm{W}}\gr{ \cdot \cond \cdot}
	\leq \underline{\prev}_{\setoftrm, \credset}^{\mathrm{WM}}\gr{ \cdot \cond \cdot}
	\leq \underline{\prev}_{\setoftrm, \credset}^{\mathrm{WHM}}\gr{ \cdot \cond \cdot}.
\]

	\subsection{Lower Transition (Rate) Operators}
\label{sapp:LTROs}
With any non-empty bounded set of transition rate matrices $\setoftrm{}$, we associate the operator $\ltro \colon \setoffn\gr{\stsp} \to \setoffn\gr{\stsp} \colon f \mapsto \ltro f$ where, for all $f$ in $\setoffn\gr{\stsp}$,
\begin{equation}
\label{eqn:ltro}
	[\ltro f]\gr{x}
	\coloneqq \inf \set*{ [\trm f]\gr{x} \colon \trm \in \setoftrm }
	\text{ for all $x$ in $\stsp$.}
\end{equation}
This operator~$\ltro$ is called the \emph{lower envelope} of $\setoftrm$.
By \cite[Proposition~7.5]{2017KrakDeBock}, we know that it is a \emph{lower transition rate operator} \cite[Definition~7.2]{2017KrakDeBock}, a specific type of non-homogeneous operator that is a non-linear generalisation of the concept of a transition rate matrix.
Hence, it should not come as a surprise that there is an equivalent of \eqref{eqn:Matrix exponential}.
Indeed, for any $t$ in $\nnegreals$, one defines the \emph{lower transition operator over $t$} as
\begin{equation}
\label{eqn:lto}
	\lto_t
	\coloneqq \lim_{n \to +\infty} \gr*{I + \frac{t}{n} \ltro}^n,
\end{equation}
where the $n$-th power should be interpreted as $n$ consecutive applications.

Almost everything has now been set up to state Proposition~\ref{prop:Imprecise Markov property}, which is the main result from imprecise CTMCs that we will need; we just have to introduce the following definition.
\begin{definition}[Definition~7.3 in \cite{2017KrakDeBock}]
\label{def:separately specified rows}
	A non-empty set of transition rate matrices $\setoftrm \subseteq \setofalltrm\gr{\stsp}$ has \emph{separately specified rows} if for any $\card{\stsp}$-tuple $\gr{\trm_x}_{x \in \stsp}$ with entries that are all elements of $\setoftrm$, there is a $\trm^\star$ in $\setoftrm$ such that
	\[
		\trm^\star\gr{x, y}
		= \trm_x\gr{x, y}
		\text{ for all } x, y \in \stsp.
	\]
	% \[
	% 	\setoftrm{}
	% 	= \set*{ \trm \in \setofalltrm\gr{\stsp} \colon \gr{\forall x \in \stsp} \trm\gr{x, \cdot} \in \setoftrm_x },
	% \]
	% where, for all $x$ in $\stsp$,
	% \[
	% 	\setoftrm_x
	% 	\coloneqq \set*{ \trm'\gr{x, \cdot} \colon \trm' \in \setoftrm }
	% \]
	% is the set of rows from which the $x$-row $\trm\gr{x, \cdot}$ of $\trm$ is selected, independent of the other rows.
\end{definition}
\begin{proposition}[Corollary~8.3 in \cite{2017KrakDeBock}]
\label{prop:Imprecise Markov property}
	Let $\credset$ be a non-empty set of initial distributions and $\setoftrm$ a non-empty and bounded set of transition rate matrices that has separately specified rows.
	If $\ltro$ is the corresponding lower transition rate operator \eqref{eqn:ltro}, then for any $t, \Delta$ in $\nnegreals$, $u$ in $\setoftseql{t}$, $x$ in $\stsp$, $x_u$ in $\stsp_u$ and $f$ in $\setoffn\gr{\stsp}$,
	\begin{multline}
	\label{eqn:Imprecise Markov property}
		\underline{\prev}_{\setoftrm, \credset}^{\mathrm{W}}\gr{ f\gr{X_{t+\Delta}} \cond X_u = x_u, X_t = x}
		= \underline{\prev}_{\setoftrm, \credset}^{\mathrm{WM}}\gr{ f\gr{X_{t+\Delta}} \cond X_u = x_u, X_t = x} \\
		= [\lto_{\Delta} f](x).
	\end{multline}
	% where $\lto_{\Delta}$ is the corresponding lower transition operator over $\Delta$ \eqref{eqn:lto}.
\end{proposition}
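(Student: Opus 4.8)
The plan is to recognise that this statement is, up to the present specialisation, exactly Corollary~8.3 of Krak et~al.~\cite{2017KrakDeBock}, so the most direct proof is to check that our hypotheses match theirs and then invoke their result. Concretely, one verifies that $\setoftrm$ is non-empty and bounded and has separately specified rows (Definition~\ref{def:separately specified rows}), that $\credset$ is a non-empty set of initial distributions, and that $\ltro$ and $\lto_\Delta$ are the operators defined in \eqref{eqn:ltro} and \eqref{eqn:lto}; these are precisely the objects in their statement, so \eqref{eqn:Imprecise Markov property} follows. For completeness, I would additionally sketch the argument underlying that corollary, as it makes transparent why separately specified rows is the decisive hypothesis.

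The underlying argument has three ingredients. First, the nesting \eqref{eqn:Nested sets of consistent processes} immediately yields $\underline{\prev}_{\setoftrm, \credset}^{\mathrm{W}} \leq \underline{\prev}_{\setoftrm, \credset}^{\mathrm{WM}} \leq \underline{\prev}_{\setoftrm, \credset}^{\mathrm{WHM}}$ when applied to $f\gr{X_{t+\Delta}}$ conditional on $X_u = x_u, X_t = x$, since restricting to a smaller set of processes can only raise the infimum. Second, I would establish the lower bound $\underline{\prev}_{\setoftrm, \credset}^{\mathrm{W}}\gr{f\gr{X_{t+\Delta}} \cond X_u = x_u, X_t = x} \geq [\lto_\Delta f](x)$ uniformly over all well-behaved consistent processes: for any such process, the map $\Delta \mapsto \prev_\prob\gr{f\gr{X_{t+\Delta}} \cond X_u = x_u, X_t = x}$ obeys a differential inequality governed by $\ltro$, and a Gr\"onwall-type comparison against the semigroup $\lto_\Delta$ from \eqref{eqn:lto} delivers the bound. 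Third, I would prove the matching upper bound $\underline{\prev}_{\setoftrm, \credset}^{\mathrm{WHM}}\gr{f\gr{X_{t+\Delta}} \cond X_u = x_u, X_t = x} \leq [\lto_\Delta f](x)$ by exhibiting a homogeneous Markov process in $\setofconsproc_{\setoftrm, \credset}^{\mathrm{WHM}}$ whose conditional expectation of $f\gr{X_{t+\Delta}}$ equals---or approximates arbitrarily well---$[\lto_\Delta f](x)$. Combining these three ingredients collapses the chain of inequalities to the asserted equalities.

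The main obstacle is the upper bound, i.e.\ attainability, and this is precisely where separately specified rows enters. To construct the optimal homogeneous Markov process, one approximates $\lto_\Delta$ by the iterates $\gr*{I + \tfrac{\Delta}{n} \ltro}^n$ and, at each step and for every state $x$, selects a transition rate matrix in $\setoftrm$ whose $x$-th row attains the infimum in \eqref{eqn:ltro}; Definition~\ref{def:separately specified rows} guarantees that these per-state optimal rows can be assembled into a single $\trmst$ in $\setoftrm$, so that the induced chain is genuinely consistent with $\setoftrm$ and lies in $\setofconsproc_{\setoftrm, \credset}^{\mathrm{WHM}}$. A secondary subtlety is the imprecise Markov property itself---that the conditioning event $X_u = x_u$ drops out---which holds because consistency with $\setoftrm$ is a constraint imposed pointwise in time, so that restricting any consistent process to the future given $X_t = x$ again yields a consistent process whose conditional lower expectation depends on $x$ alone; the boundedness of $\setoftrm$ and the well-behavedness condition ensure that the infima and the limit in \eqref{eqn:lto} are well defined throughout.
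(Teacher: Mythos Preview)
Your primary move---recognising that the proposition is a verbatim citation of Corollary~8.3 in \cite{2017KrakDeBock} and that the present paper offers no independent proof---is exactly what the paper does: it states the result with attribution and invokes it later. So as a proof for this paper, your first paragraph is complete and correct.

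Your supplementary sketch, however, contains a substantive error in the third ingredient. The proposition asserts equality only for the $\mathrm{W}$ and $\mathrm{WM}$ lower expectations, not for $\mathrm{WHM}$, and this omission is deliberate: in general $\underline{\prev}_{\setoftrm, \credset}^{\mathrm{WHM}}\gr{f\gr{X_{t+\Delta}} \cond X_t = x}$ equals $\inf_{\trm \in \setoftrm} [e^{\Delta \trm} f](x)$, which can be strictly larger than $[\lto_\Delta f](x)$. The construction you describe---at each step of the discretisation $\gr{I + \tfrac{\Delta}{n}\ltro}^n$, assembling via separately specified rows a single $\trmst \in \setoftrm$ whose rows realise the infima in \eqref{eqn:ltro}---does yield a matrix in $\setoftrm$, but a \emph{different} one at each step, because the function being acted on changes from $g_k$ to $\gr{I + \tfrac{\Delta}{n}\ltro} g_k$. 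The resulting process is a piecewise-homogeneous Markov chain lying in $\setofconsproc_{\setoftrm, \credset}^{\mathrm{WM}}$ but not in $\setofconsproc_{\setoftrm, \credset}^{\mathrm{WHM}}$. If you reroute the upper bound through $\mathrm{WM}$ instead of $\mathrm{WHM}$, the sandwich $[\lto_\Delta f](x) \leq \underline{\prev}^{\mathrm{W}} \leq \underline{\prev}^{\mathrm{WM}} \leq [\lto_\Delta f](x)$ closes correctly and matches the actual argument in \cite{2017KrakDeBock}.
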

This result justifies calling $\setofconsproc_{\setoftrm, \credset}^{\mathrm{W}}$ (and $\setofconsproc_{\setoftrm, \credset}^{\mathrm{WM}}$) an imprecise CTMC, as \eqref{eqn:Imprecise Markov property} is an imprecise version of the Markov property \eqref{eqn:HCTMC Markov}.
Even more, the imprecise CTMC~$\setofconsproc_{\setoftrm, \credset}^{\mathrm{W}}$ also satisfies an imprecise version of the law of iterated expectation.
\begin{proposition}[Theorem~6.5 in \cite{2017KrakDeBock}]
\label{prop:Imprecise law of iterated expectation}
	If $\credset$ is a non-empty set of initial distributions and $\setoftrm$ a non-empty, bounded and convex set of transition rate matrices, then for any $u, v, w$ in $\setoftseq$ with $\max u < \min v$ and $\max v < \min w$, $x_u$ in $\stsp_u$ and $f$ in $\setoffn\gr{\stsp_{u \cup v \cup w}}$,
	\begin{multline*}
		\underline{\prev}_{\setoftrm, \credset}^{\mathrm{W}}\gr{f\gr{X_u, X_v, X_w} \cond X_u = x_u} \\
		= \underline{\prev}_{\setoftrm, \credset}^{\mathrm{W}}\gr{ \underline{\prev}_{\setoftrm, \credset}^{\mathrm{W}}\gr{f\gr{X_u, X_v, X_w} \cond X_u = x_u, X_v} \cond X_u = x_u}.
	\end{multline*}
\end{proposition}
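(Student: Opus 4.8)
\begin{proofof}{Proposition~\ref{prop:Imprecise law of iterated expectation}}
	The plan is to prove the two inequalities separately. Abbreviate $\underline{\prev} \coloneqq \underline{\prev}_{\setoftrm, \credset}^{\mathrm{W}}$, write $C \coloneqq \gr{X_u = x_u}$ and, for each $x_v$ in $\stsp_v$, $C_{x_v} \coloneqq C \cap \gr{X_v = x_v}$, and---abbreviating $f \coloneqq f\gr{X_u, X_v, X_w}$---set $g\gr{x_v} \coloneqq \underline{\prev}\gr{f \cond C_{x_v}}$. Since every $\prob$ in $\setofconsproc_{\setoftrm, \credset}^{\mathrm{W}}$ is a coherent conditional probability (Definition~\ref{def:Stochastic process}) and $C_{x_v}$ is non-empty, each $\prev_\prob\gr{f \cond C_{x_v}}$ is well-defined, so $g$ is a genuine element of $\setoffn\gr{\stsp_v}$ and the right-hand side of the asserted identity equals $\underline{\prev}\gr{g\gr{X_v} \cond C}$. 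Both directions rest on the \emph{precise} tower rule: repeated use of Bayes' rule and finite additivity (Lemma~\ref{lem:Coherent conditional probability satisfies the laws of probability}) over the partition of $\paths$ induced by $X_v$ shows that every single stochastic process $\prob$ satisfies $\prev_\prob\gr{h \cond C} = \sum_{x_v \in \stsp_v} \prob\gr{X_v = x_v \cond C}\, \prev_\prob\gr{h \cond C_{x_v}}$ for every $h$ in $\setoffn\gr{\stsp_{u \cup v \cup w}}$; the relevant pairs lie in $\procdomain$ because $\max u < \min v$ and $\max v < \min w$.

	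For the inequality ``$\geq$'', I would fix an arbitrary $\prob$ in $\setofconsproc_{\setoftrm, \credset}^{\mathrm{W}}$, apply the tower rule to $h = f$, bound $\prev_\prob\gr{f \cond C_{x_v}} \geq g\gr{x_v}$ by the defining infimum, and use that the weights $\prob\gr{X_v = x_v \cond C}$ are non-negative; this gives $\prev_\prob\gr{f \cond C} \geq \prev_\prob\gr{g\gr{X_v} \cond C} \geq \underline{\prev}\gr{g\gr{X_v} \cond C}$, the last step being the definition of the lower expectation. Taking the infimum over $\prob$ then yields $\underline{\prev}\gr{f \cond C} \geq \underline{\prev}\gr{g\gr{X_v} \cond C}$.

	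The reverse inequality is where the real work lies. Fix $\varepsilon > 0$. I would pick $\prob^{\star}$ in $\setofconsproc_{\setoftrm, \credset}^{\mathrm{W}}$ with $\prev_{\prob^{\star}}\gr{g\gr{X_v} \cond C} \leq \underline{\prev}\gr{g\gr{X_v} \cond C} + \varepsilon$ and, for each of the finitely many $x_v$ in $\stsp_v$, a process $\prob^{x_v}$ in $\setofconsproc_{\setoftrm, \credset}^{\mathrm{W}}$ with $\prev_{\prob^{x_v}}\gr{f \cond C_{x_v}} \leq g\gr{x_v} + \varepsilon$. These are then spliced into a single map $\prob'$ that agrees with $\prob^{\star}$ on every event determined by the states up to time $\max v$ and that, conditional on $C_{x_v}$, agrees with $\prob^{x_v}$ on every event determined by the states from time $\max v$ onwards. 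Since these prescriptions do not conflict, Lemmas~\ref{lem:Coherent contional probability can be extended} and \ref{lem:Coherent conditional probability if and only if it can be extended} guarantee that $\prob'$ extends to a coherent conditional probability on $\procdomain$, i.e.\ a stochastic process.

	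The crux---and the step I expect to be the main obstacle---is to verify that this $\prob'$ in fact belongs to $\setofconsproc_{\setoftrm, \credset}^{\mathrm{W}}$: that it is well-behaved (inherited from $\prob^{\star}$ and the processes $\prob^{x_v}$, the delicate point being the splice at time $\max v$); that its initial distribution equals that of $\prob^{\star}$ and hence lies in $\credset$; and, hardest, that it is consistent with $\setoftrm$, its instantaneous transition rate matrix agreeing with that of $\prob^{\star}$ for $t < \max v$ and with that of the relevant $\prob^{x_v}$ for $t > \max v$, both of which lie in $\setoftrm$. The behaviour exactly at time $\max v$ and the treatment of conditioning events that straddle $\max v$ are where the full conditional framework and the convexity hypothesis on $\setoftrm$ do their work. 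Granting this, the tower rule for $\prob'$, the defining properties of the splice, and the choices above give $\prev_{\prob'}\gr{f \cond C} = \sum_{x_v} \prob^{\star}\gr{X_v = x_v \cond C}\, \prev_{\prob^{x_v}}\gr{f \cond C_{x_v}} \leq \prev_{\prob^{\star}}\gr{g\gr{X_v} \cond C} + \varepsilon \leq \underline{\prev}\gr{g\gr{X_v} \cond C} + 2\varepsilon$, while $\underline{\prev}\gr{f \cond C} \leq \prev_{\prob'}\gr{f \cond C}$ because $\prob'$ belongs to $\setofconsproc_{\setoftrm, \credset}^{\mathrm{W}}$. Letting $\varepsilon \downarrow 0$ gives $\underline{\prev}\gr{f \cond C} \leq \underline{\prev}\gr{g\gr{X_v} \cond C}$, which with the previous paragraph yields the claimed identity. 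Apart from the splice, the argument only uses the tower rule and the characterisation of the lower expectation as an infimum; the measure-theoretic bookkeeping of the gluing step is the only genuinely technical ingredient.
\end{proofof}
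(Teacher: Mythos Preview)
The paper does not prove this proposition at all: it is stated with the attribution ``Theorem~6.5 in \cite{2017KrakDeBock}'' and is used as a black box. There is therefore no in-paper proof to compare your attempt against.

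That said, your strategy is the right one and is essentially how the result is proved in \cite{2017KrakDeBock}: the ``$\geq$'' direction follows from the precise tower rule together with the defining infimum, and the ``$\leq$'' direction is obtained by a splicing construction that glues a near-optimal process on $[0,\max v]$ to near-optimal continuations after $\max v$, one for each $x_v$. You have also correctly located the genuine difficulty: showing that the spliced object is again a well-behaved process consistent with $\setoftrm$ and $\credset$. One point deserves more care than you give it. The sentence ``Since these prescriptions do not conflict, Lemmas~\ref{lem:Coherent contional probability can be extended} and \ref{lem:Coherent conditional probability if and only if it can be extended} guarantee that $\prob'$ extends to a coherent conditional probability on $\procdomain$'' is too quick: those lemmas extend a \emph{coherent} partial assignment, but you still have to verify that your partial assignment is coherent, which is not automatic when you stitch together values coming from different processes. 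In \cite{2017KrakDeBock} this is handled by an explicit concatenation construction at the level of families of transition matrices (their Lemma~F.1 and surrounding machinery), and the convexity of $\setoftrm$ is used precisely to ensure that the ``seam'' at $\max v$ still produces rate matrices in $\setoftrm$. Your outline is morally correct, but the gluing step cannot be discharged by a bare appeal to the extension theorem.
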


We conclude this section with a strengthened version of \cite[LR5]{2017KrakDeBock}.
\begin{lemma}
\label{lem:Norm of ltro}
	Let $\setoftrm$ be a non-empty and bounded set of transition rate matrices with associated lower transition rate operator $\ltro$.
	Then $\norm{\ltro} = \sup \set*{\norm{\trm} \colon \trm \in \setoftrm}$, such that $\norm{\trm} \leq \norm{\ltro}$ for all $\trm$ in $\setoftrm$.
\end{lemma}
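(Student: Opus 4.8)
The plan is to prove the equality $\norm{\ltro} = B$, where $B \coloneqq \sup\set*{\norm{\trm} \colon \trm \in \setoftrm}$ (which is finite because $\setoftrm$ is bounded), by establishing the two inequalities separately. The last assertion of the lemma is then immediate, since $\norm{\trm} \leq B = \norm{\ltro}$ for every $\trm$ in $\setoftrm$.

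For the inequality $\norm{\ltro} \leq B$ — which is, in essence, \cite[LR5]{2017KrakDeBock} — I would argue pointwise. Fix $f$ in $\setoffn\gr{\stsp}$ with $\norm{f} = 1$ and $x$ in $\stsp$. Choosing any $\trm_0$ in $\setoftrm$ (possible because $\setoftrm$ is non-empty), definition \eqref{eqn:ltro} gives $[\ltro f](x) \leq [\trm_0 f](x) \leq \norm{\trm_0 f} \leq \norm{\trm_0} \leq B$. For the matching lower bound, observe that $[\trm f](x) \geq -\norm{\trm f} \geq -\norm{\trm} \geq -B$ for every $\trm$ in $\setoftrm$, so taking the infimum over $\trm$ yields $[\ltro f](x) \geq -B$. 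Hence $\abs{[\ltro f](x)} \leq B$ for all $x$, i.e.\ $\norm{\ltro f} \leq B$; letting $f$ range over all unit vectors gives $\norm{\ltro} \leq B$.

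The real work — and where I expect the main obstacle — is the reverse inequality $\norm{\ltro} \geq \norm{\trm}$ for each $\trm$ in $\setoftrm$: one has to produce a single unit test function that simultaneously lower-bounds the operator norm of $\ltro$ and connects to the diagonal-entry formula \eqref{eqn:Norm of trm} for $\norm{\trm}$. The choice that does the job is, for each $x_0$ in $\stsp$, the function $f_{x_0} \coloneqq 2\indica{x_0} - \indica{\stsp}$, which clearly satisfies $\norm{f_{x_0}} = 1$. Because the off-diagonal entries in a row of a transition rate matrix sum to minus its diagonal entry, we get $[\trm f_{x_0}](x_0) = 2\trm\gr{x_0, x_0}$, which is non-positive since $\trm\gr{x_0,x_0} \leq 0$. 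Definition \eqref{eqn:ltro} then gives $[\ltro f_{x_0}](x_0) \leq [\trm f_{x_0}](x_0) = 2\trm\gr{x_0,x_0} \leq 0$, whence $\abs{[\ltro f_{x_0}](x_0)} \geq 2\abs{\trm\gr{x_0,x_0}}$ and therefore $\norm{\ltro} \geq \norm{\ltro f_{x_0}} \geq \abs{[\ltro f_{x_0}](x_0)} \geq 2\abs{\trm\gr{x_0,x_0}}$. Taking the maximum over $x_0$ in $\stsp$ and invoking \eqref{eqn:Norm of trm} yields $\norm{\ltro} \geq 2\max\set*{\abs{\trm\gr{x_0,x_0}} \colon x_0 \in \stsp} = \norm{\trm}$; since $\trm$ in $\setoftrm$ was arbitrary, $\norm{\ltro} \geq B$. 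Combining the two inequalities finishes the proof.
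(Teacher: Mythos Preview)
Your proof is correct. Both inequalities go through exactly as you describe, and the choice of test function $f_{x_0} = 2\indica{x_0} - \indica{\stsp}$ is the key step: it has unit norm and yields $[\trm f_{x_0}](x_0) = 2\trm(x_0,x_0)$, recovering the factor $2$ in \eqref{eqn:Norm of trm} directly (the naive choice $\indica{x_0}$ would only give $\norm{\ltro} \geq \tfrac{1}{2}\norm{\trm}$).

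The paper's argument is different in organisation. It does not prove the two inequalities separately from the definition of the operator norm; instead, it invokes the closed-form expression $\norm{\ltro} = 2\max\set{-[\ltro\indica{x}](x) \colon x \in \stsp}$ from \cite[Proposition~4]{2017Erreygers}, substitutes the definition \eqref{eqn:ltro} of $\ltro$, and then simply swaps the order of the $\max$ over $x$ and the $\sup$ over $\trm$ to arrive at $\sup\set{\norm{\trm} \colon \trm \in \setoftrm}$. This is shorter, but it outsources the real content (the norm formula for lower transition rate operators) to the external reference. Your approach, by contrast, is self-contained: it re-derives what is needed from that formula via the explicit test function, at the cost of a slightly longer argument. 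Both ultimately hinge on the same diagonal-entry identification, so the difference is one of packaging rather than of idea.
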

\begin{proof}
	Recall from \eqref{eqn:Norm of trm} that, for all $\trm$ in $\setoftrm$,
	\[
		\norm{\trm}
		= 2 \max \set*{ \abs{\trm\gr{x, x}} \colon x \in \stsp }
		= 2 \max \set*{ -[\trm \indica{x}]\gr{x} \colon x \in \stsp }.
	\]
	Moreover, by \cite[Proposition~4]{2017Erreygers},
	\begin{align*}
		\norm{\ltro}
		% = 2 \max \set*{ \abs{[\ltro \indica{x}]\gr{x}} \colon x \in \stsp }
		&= 2 \max \set*{ -[\ltro \indica{x}]\gr{x} \colon x \in \stsp }.
	\intertext{
	Using \eqref{eqn:ltro} and executing some straightforward manipulations yields
	}
		\norm{\ltro}
		&= 2 \max \set*{ - \inf \set*{ [\trm \indica{x}]\gr{x} \colon \trm \in \setoftrm } \colon x \in \stsp } \\
		&= 2 \max \set*{ \sup \set*{ - [\trm \indica{x}]\gr{x} \colon \trm \in \setoftrm } \colon x \in \stsp } \\
		&= \sup \set*{ 2 \max \set*{ - [\trm \indica{x}]\gr{x} \colon x \in \stsp } \colon \trm \in \setoftrm } = \sup \set*{ \norm{\trm} \colon \trm \in \setoftrm}.
	\end{align*}
	The stated now follows immediately from the final equality.
	\qed
\end{proof}

	\subsection{Irreducibility}
Just like precise CTMCs, their imprecise counterparts also have some nice ergodic properties.
For a detailed exposition of these properties, we refer the interested reader to our previous work \cite{2016DeBock,2017Erreygers}.
We here only mention the definitions and results that we will need in the remainder.

Let $\setoftrm$ be a non-empty and bounded set of lower transition rate operators.
As previously mentioned in Appendix~\ref{sapp:LTROs}, the corresponding lower envelope $\ltro$ is a lower transition rate operator.
For such a lower transition rate operator, the imprecise counterpart of the accessibility relation $\cdot \acces \cdot$ is the upper reachability relation \cite[Definition~7]{2016DeBock}.
We say that a state $x$ is \emph{upper reachable} from the state $y$, denoted by $y \upacces x$, if there is a sequence $y = x_0, \dots, x_n = x$ in $\stsp$ such that $- [\ltro \gr{ - \indica{x_i}}]\gr{x_{i-1}} > 0$ for all $i$ in $\set{1, \dots, n}$.
\begin{definition}
\label{def:Imprecise irreducibility}
	Let $\setoftrm$ be an non-empty bounded set of transition rate matrices.
	The corresponding lower transition rate operator $\ltro$ is \emph{irreducible} if any state is upper reachable from any other state, that is, if
	\[
		\stsp_{\mathrm{top}}
		\coloneqq \set*{x \in \stsp \colon \gr{\forall y \in \stsp} y \upacces x}
		= \stsp.
	\]
\end{definition}

It now follows from \cite[Theorem~19]{2016DeBock} that if $\ltro$ is irreducible, then $\ltro$ is \emph{ergodic}, meaning that, for all $f$ in $\setoffn\gr{\stsp}$, $\lto_t f$ converges to a constant function in the limit for $t \to +\infty$ \cite[Definition~6]{2016DeBock}.
For all $t$ in $\posreals$ and $x,y$ in $\stsp$, this also implies that $- [\lto_t \gr{- \indica{x}}]\gr{y} > 0$ \cite[Proposition~17]{2016DeBock}, which is similar to
% Definition~\ref{def:Irreducibility}
the definition of irreducibility in the precise case.
Note also the similarity between Proposition~\ref{prop:Precise irreducibility} and Definition~\ref{def:Imprecise irreducibility}, which justifies the use of the term irreducible.
Furthermore, the following property holds.
\begin{corollary}
\label{cor:lto skeleton of irreducible ltro converges}
	Let $\setoftrm$ be a non-empty bounded set of transition rate matrices.
	If the corresponding lower envelope $\ltro$ is irreducible, then for any $f$ in $\setoffn\gr{\stsp}$ and $\delta$ in $\posreals$ such that $\delta \norm{\ltro} < 2$, $\gr{I + \delta \ltro}^n f$ converges to a constant function in the limit for $n \to +\infty$: there is some $f_{\delta}$ in $\reals$ such that \smash{$\lim_{n \to +\infty} \gr{I + \delta \ltro}^n f = f_{\delta} \indica{\stsp}$}.
	Moreover, $\set{\min \gr{I + \delta \ltro}^n f}_{n \in \nats}$ is a non-decreasing sequence that converges to $f_{\delta}$.
\end{corollary}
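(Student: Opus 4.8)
The plan is to run, in the imprecise setting, the very argument used for the precise case in the proof of Lemma~\ref{lem:Lower and upper bound for limit expectation}, with the transition matrix $I+\delta\trm$ replaced by the operator $\lto_{\delta}\coloneqq I+\delta\ltro$ and the classical Perron--Frobenius convergence theorem replaced by its imprecise counterpart. The first step is to observe that $\lto_{\delta}$ is a lower transition operator. Indeed, since $\delta\norm{\trm}\leq\delta\norm{\ltro}<2$ for every $\trm$ in $\setoftrm$ by Lemma~\ref{lem:Norm of ltro}, Lemma~\ref{lem:I + delta trm is tm} makes each $I+\delta\trm$ a transition matrix, and
\[
	[\lto_{\delta}f]\gr{x}
	= f\gr{x} + \delta\inf\set{[\trm f]\gr{x} \given \trm\in\setoftrm}
	= \inf\set{[\gr{I+\delta\trm}f]\gr{x} \given \trm\in\setoftrm}
	\quad\text{for all $x$ in $\stsp$,}
\]
so $\lto_{\delta}$ is the lower envelope of the non-empty bounded set $\set{I+\delta\trm \given \trm\in\setoftrm}$ of transition matrices, hence a lower transition operator. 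In particular, $\lto_{\delta}$ is monotone, non-negatively homogeneous, constant-additive, and satisfies $\lto_{\delta}\indica{\stsp}=\indica{\stsp}$ and $\lto_{\delta}0=0$.

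The second step is to show that the discrete-time imprecise Markov chain with one-step operator $\lto_{\delta}$ inherits an irreducibility-and-aperiodicity property from the irreducibility of $\ltro$. For all $x,y$ in $\stsp$ and all $\delta$ in $\posreals$,
\[
	-[\lto_{\delta}\gr{-\indica{x}}]\gr{y}
	= \indica{x}\gr{y} + \delta\gr*{-[\ltro\gr{-\indica{x}}]\gr{y}}
	\geq \delta\gr*{-[\ltro\gr{-\indica{x}}]\gr{y}},
\]
so every edge of the upper-reachability graph underlying Definition~\ref{def:Imprecise irreducibility} is an edge of the one-step upper-accessibility graph of $\lto_{\delta}$; irreducibility of $\ltro$ therefore makes the latter graph strongly connected. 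Taking $y=x$ gives $-[\lto_{\delta}\gr{-\indica{x}}]\gr{x} = 1 + \delta\sup\set{\trm\gr{x,x} \given \trm\in\setoftrm}$, which is strictly positive because $\sup\set{\trm\gr{x,x}\given\trm\in\setoftrm}\leq 0$ while $\abs{\trm\gr{x,x}}\leq\tfrac{1}{2}\norm{\trm}\leq\tfrac{1}{2}\norm{\ltro}$ for every $\trm$ in $\setoftrm$ by \eqref{eqn:Norm of trm} and Lemma~\ref{lem:Norm of ltro}, whence $1+\delta\sup\set{\trm\gr{x,x}\given\trm\in\setoftrm}\geq 1-\tfrac{1}{2}\delta\norm{\ltro}>0$. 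Thus every state carries an upper self-loop, i.e. $\lto_{\delta}$ is aperiodic as well.

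The third step is to invoke an ergodicity theorem for discrete-time imprecise Markov chains --- the discrete-time counterpart of \cite[Theorem~19]{2016DeBock}, in the spirit of the coefficient-of-ergodicity computations of \cite{2015Skulj,2017Erreygers} --- to conclude that irreducibility together with aperiodicity of $\lto_{\delta}$ forces some power $\lto_{\delta}^{k}$ to be a strict contraction for the oscillation seminorm $g\mapsto\max g-\min g$, so that $\gr{I+\delta\ltro}^{n}f=\lto_{\delta}^{n}f$ converges uniformly to a constant function $f_{\delta}\indica{\stsp}$ as $n\to+\infty$. If no ready-made reference is available, this step can be carried out directly, by bounding the oscillation coefficient of $\lto_{\delta}^{k}$ away from $1$ using the strongly connected, aperiodic upper-accessibility graph constructed in the second step. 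I expect this to be the \emph{main obstacle}: the precise proof relied on a standard Perron--Frobenius convergence result, whereas here one needs its imprecise analogue, which either has to be located in the literature or re-derived from the accessibility structure.

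Finally, the monotonicity is immediate from $\lto_{\delta}$ being a lower transition operator. For any $g$ in $\setoffn\gr{\stsp}$ we have $g\geq\gr{\min g}\indica{\stsp}$, so by constant additivity, monotonicity and $\lto_{\delta}0=0$ it follows that $\lto_{\delta}g\geq\gr{\min g}\indica{\stsp}$, i.e. $\min\lto_{\delta}g\geq\min g$. Iterating shows that $\set{\min\gr{I+\delta\ltro}^{n}f}_{n\in\nats}$ is non-decreasing, and combining this with the convergence of $\lto_{\delta}^{n}f$ to $f_{\delta}\indica{\stsp}$ yields that it converges to $\min\gr{f_{\delta}\indica{\stsp}}=f_{\delta}$. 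This mirrors exactly the closing argument of the proof of Lemma~\ref{lem:Lower and upper bound for limit expectation}.
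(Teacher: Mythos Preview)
Your proposal is correct and follows essentially the same skeleton as the paper: show that $\lto_{\delta}=I+\delta\ltro$ is a lower transition operator, argue that it is ergodic in the discrete-time sense, and then read off the monotonicity of $\min\lto_{\delta}^{n}f$ from the lower-transition-operator axioms. The final monotonicity argument is identical to the paper's (which appeals to \cite[Definition~7.1(LT1)]{2017KrakDeBock}).

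The one genuine difference is in how ergodicity of $\lto_{\delta}$ is obtained. The paper does not re-establish the discrete upper-accessibility structure of $\lto_{\delta}$ by hand; instead it uses that irreducibility of $\ltro$ implies \emph{continuous-time} ergodicity of $\ltro$ via \cite[Theorem~19]{2016DeBock}, and then transfers this to discrete-time ergodicity of $I+\delta\ltro$ through \cite[Theorem~8]{2017Erreygers} together with \cite[Proposition~7]{2012Hermans} or \cite[Theorem~21]{2013Skulj}. Your route---computing $-[\lto_{\delta}(-\indica{x})](y)$ directly to exhibit a strongly connected, aperiodic upper-accessibility graph for $\lto_{\delta}$, and then invoking a discrete-time imprecise ergodicity theorem---is more elementary and self-contained, and the step you flag as the ``main obstacle'' is precisely covered by the same references the paper uses (Hermans~\cite{2012Hermans} and \v{S}kulj~\cite{2013Skulj}). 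So there is no actual gap; you have simply bypassed the detour through continuous-time ergodicity that the paper takes.
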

\begin{proof}
	Fix any $\delta$ in $\posreals$ such that $\delta \norm{\ltro} < 2$ and let $\lto \coloneqq \gr{I + \delta \ltro}$.
	Then by \cite[Proposition~3]{2017Erreygers}, $\lto$ is a \emph{lower transition operator} (see \cite[Definition~7.1]{2017KrakDeBock} or \cite[Definition~1]{2017Erreygers}).
	Furthermore, since $\ltro$ is irreducible and hence ergodic, it follows from \cite[Theorem~8]{2017Erreygers} and either \cite[Proposition~7]{2012Hermans} or \cite[Theorem~21]{2013Skulj} that the lower transition operator $\lto$ is also \emph{ergodic}, meaning that, for all $f$ in $\setoffn\gr{\stsp}$, $\lim_{n \to +\infty} \lto^n f = \lim_{n \to +\infty} \gr{I + \delta \ltro}^n f$ exists and is a constant function, here denoted by $f_{\delta} \indica{\stsp}$.
	Finally, the non-decreasing character of the sequence in the statement can be verified by repeatedly applying \cite[Definition~7.1(LT1)]{2017KrakDeBock}; that the sequence converges to $f_{\delta}$ follows immediately from the previous.
	\qed
\end{proof}

	\section{The Lumped Stochastic Process}
Before diving in head first, we first extend the inverse lumping map $\invlumap$ to tuples of state assignments.
For any $u$ in $\setoftseq$, similar to what we did in Section~\ref{sapp:Sequences of time points}, we let $\lu{x}_u$ denote an element of $\smash{\lustsp_u \coloneqq \prod_{t \in u} \lustsp}$.
The domain of the inverse lumping map $\invlumap$ can then be trivially extended to $\lustsp_u$ as follows: we let
$
	\invlumap\gr{\lu{x}_\emptytseq}
	\coloneqq x_{\emptytseq}
$
and for all $u$ in $\setoftseqne{}$ and all $\lu{x}_u$ in $\lustsp_u$, we let
\[
	\invlumap\gr{\lu{x}_{u}}
	\coloneqq \set*{ x_{u} \in \stsp_{u} \colon (\forall t \in u) \lumap\gr{x_t} = \lu{x}_t }.
\]

	\subsection{A Formal Definition of the Lumped Stochastic Process}
\label{sapp:Formal definition of the lumped proc}
In order to define the lumped process rigorously, we need a more formal construction than that given in the main text \eqref{eqn:Naive definition of lumped process}.
To that end, we now consider a positive and irreducible CTMC $\prob$ that is characterised by $(\stsp, \inidist, \trm)$ and a lumping map $\lumap \colon \stsp \to \lustsp$.

For starters, we first need to specify the set of ``feasible'' paths for the lumped process.
A natural way is to map $\paths$, the set of ``feasible'' paths on $\stsp$, to a set of paths on $\lustsp$ using $\lumap$:
\begin{equation}
\label{eqn:Set of lumped paths}
	\lu\paths
	\coloneqq \set*{\lumap \circ \pad \colon \pad \in \paths},
\end{equation}
where $\lumap \circ \pad$ denotes the function composition of $\pad \colon \nnegreals \to \stsp$ and $\lumap \colon \stsp \to \lustsp$, given by $\lumap \circ \pad \colon \nnegreals{} \to \lustsp \colon t \mapsto \gr{\lumap \circ \pad} \gr{t} \coloneqq \lumap\gr{\pad\gr{t}}$.
Note that because $\paths$ satisfies \eqref{eqn:Set of feasible paths}, $\lu\paths$ clearly satisfies a lumped version of \eqref{eqn:Set of feasible paths}:
\[
	\gr{\forall u \in \setoftseqne}
	\gr{\forall \lu{x}_u \in \lustsp_u}
	\gr{\exists \lu\omega \in \lu\paths}
	~\lu\omega\gr{t}
	= \lu{x}_t
	~\text{for all $t$ in $u$.}
\]

For any $t$ in $\nnegreals$ and any $\lu{x}$ in $\lustsp$, we can now consider the elementary event
\[
	\gr{\lu{X}_t = \lu{x}}
	\coloneqq \set{\lu\pad \in \lu\paths \colon \lu\pad\gr{t} = \lu{x}}.
\]
% In light of \eqref{eqn:Naive definition of lumped process} and \eqref{eqn:Set of lumped paths}, $\gr{\lu{X}_t = \lu{x}}$ in $\lu\paths$ corresponds to an event in $\paths$:
% \begin{align*}
% 	\set*{\pad \in \paths \colon \lumap\gr{\pad\gr{t}} = \lu{x}}
% 	&= \set*{\pad \in \paths \colon \pad\gr{t} \in \invlumap\gr{\lu{x}}}
% 	= \bigcup_{x \in \invlumap\gr{\lu{x}}} \set*{\pad \in \paths \colon \pad\gr{t} = x} \\
% 	&= \bigcup_{x \in \invlumap\gr{\lu{x}}} \gr{X_t = x}
% 	\eqqcolon \gr{X_t \in \invlumap\gr{\lu{x}}}.
% \end{align*}
As before, for any $u$ in $\setoftseq{}$ and $\lu{x}_u$ in $\lustsp_u$, we also let
\[
	\gr{\lu{X}_u = \lu{x}_u}
	\coloneqq \bigcap_{t \in u} \gr{\lu{X}_t = \lu{x}_t},
\]
where $\gr{\lu{X}_\emptytseq = \lu{x}_\emptytseq} = \lu\paths$.
% Naturally, this also corresponds to an event in $\paths$:
% \[
% 	\gr{X_u \in \invlumap\gr{\lu{x}_u}}
% 	\coloneqq \bigcup_{x_u \in \invlumap\gr{x_u}} \gr{X_u = x_u}.
% \]
For any $u$ in $\setoftseq$, the set of elementary elements
\begin{align*}
	\lu\elementaryevents_u
	\coloneqq{}& \set{\gr{\lu{X}_t = \lu{x}} \colon t \in u \cup [\max u, +\infty), \lu{x} \in \lustsp}
\end{align*}
induces the algebra $\lu\eventalgebra_u \coloneqq \langle \lu\elementaryevents_u \rangle$.
The domain of the lumped stochastic process~$\problum$ should hence be
\[
	\luprocdomain
	\coloneqq \set{\gr{\lu{A}_u, \lu{X}_u = \lu{x}_u} \colon u \in \setoftseq, \lu{x}_u \in \lustsp_u, \lu{A}_u \in \lu\eventalgebra_u}.
\]

We have now introduced almost all concepts to formally define the lumped stochastic process~$\problum$.
The sole remaining concept that we need is another inverse derived from $\lumap$, this time from $\lu\paths$ to $\paths$.
To that end, we consider the map $\invlumap_\paths$ that maps any subset $\lu{A}$ of $\lu\paths$ to
\begin{equation}
\label{eqn:invlumap_paths}
	\invlumap_\paths\gr{\lu{A}}
	\coloneqq \set{\pad \in \paths \colon \lumap \circ \pad \in \lu{A}},
\end{equation}
which is a subset of $\paths$.
Note that $\invlumap_\paths$ is indeed an inverse, as clearly
\begin{equation}
\label{eqn:invlumap_paths is inverse}
	\set{\lumap \circ \pad \colon \pad \in \invlumap_\paths\gr{\lu{A}} }
	= \lu{A}.
\end{equation}
Fix any $u$ in $\setoftseq$ and $\lu{x}_u$ in $\lustsp_u$.
Then some straightforward manipulations---similar to those used in the proof of Lemma~\ref{lem:invlumap_paths is element of eventalgebra}---yield
\begin{equation}
\label{eqn:invlumap_paths:state instantiation}
	\invlumap_\paths\gr{\lu{X}_u = \lu{x}_u}
	= \bigcup_{x_u \in \invlumap\gr{\lu{x}_u}} \gr{X_u = x_u}
	\eqqcolon \gr{X_{u} \in \invlumap\gr{\lu{x}_u}},
\end{equation}
More generally, we find the following.
\begin{lemma}
\label{lem:invlumap_paths is element of eventalgebra}
	If $\lumap \colon \stsp \to \lustsp$ is a lumping map, then for all $u$ in $\setoftseq$ and $\lu{A}_u$ in $\lu\eventalgebra_u$, $\invlumap_\paths\gr{\lu{A}_u}$ is an element of $\eventalgebra_u$.
\end{lemma}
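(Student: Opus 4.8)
The plan is to exploit that $\invlumap_\paths$ is simply the preimage map of the function $\paths \to \lu\paths \colon \pad \mapsto \lumap \circ \pad$, and that preimage maps commute with all Boolean set operations. Combined with an explicit description of the image under $\invlumap_\paths$ of each generating elementary event, a standard ``good sets'' argument then yields the claim. I do not expect to need any of the earlier results beyond the definitions \eqref{eqn:Set of lumped paths}, \eqref{eqn:invlumap_paths} and \eqref{eqn:invlumap_paths:state instantiation}; indeed, the lemma is exactly the promised generalisation of \eqref{eqn:invlumap_paths:state instantiation}.

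First, I would record that $\invlumap_\paths$ commutes with complementation, unions and intersections. It is immediate from \eqref{eqn:invlumap_paths} — and from the fact that $\lumap \circ \pad$ always lies in $\lu\paths$ by \eqref{eqn:Set of lumped paths} — that $\invlumap_\paths\gr{\lu\paths \setminus \lu{A}} = \paths \setminus \invlumap_\paths\gr{\lu{A}}$ and that $\invlumap_\paths\gr*{\bigcup_{i} \lu{A}_i} = \bigcup_{i} \invlumap_\paths\gr{\lu{A}_i}$ (and likewise for intersections) for arbitrary subsets $\lu{A}, \lu{A}_i$ of $\lu\paths$; in particular $\invlumap_\paths$ sends finite Boolean combinations to finite Boolean combinations.

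Next, I would compute $\invlumap_\paths$ on the generating set $\lu\elementaryevents_u$. For any $t$ in $u \cup [\max u, +\infty)$ and any $\lu{x}$ in $\lustsp$, unpacking the definitions of $\gr{\lu{X}_t = \lu{x}}$, of $\invlumap_\paths$ and of $\invlumap$ — exactly as in the derivation of \eqref{eqn:invlumap_paths:state instantiation}, specialised to the single time point $t$ — gives
\[
	\invlumap_\paths\gr{\lu{X}_t = \lu{x}}
	= \bigcup_{x \in \invlumap\gr{\lu{x}}} \gr{X_t = x}.
\]
Since $\stsp$ is finite, the right-hand side is a finite union of elements of $\elementaryevents_u$ — note that $t$ belongs to the very same time-index set $u \cup [\max u, +\infty)$ used to define $\elementaryevents_u$ — and hence is an element of $\eventalgebra_u$.

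Finally, I would set $\mathcal{D} \coloneqq \set{\lu{A} \subseteq \lu\paths \given \invlumap_\paths\gr{\lu{A}} \in \eventalgebra_u}$. Because $\eventalgebra_u$ is an algebra and $\invlumap_\paths$ commutes with complements and finite unions, $\mathcal{D}$ is an algebra of subsets of $\lu\paths$ (it contains $\lu\paths$ since $\invlumap_\paths\gr{\lu\paths} = \paths \in \eventalgebra_u$); by the previous step $\mathcal{D} \supseteq \lu\elementaryevents_u$, and therefore $\mathcal{D}$ contains the generated algebra $\lu\eventalgebra_u = \langle \lu\elementaryevents_u \rangle$, which is precisely the assertion of the lemma. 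I do not anticipate a real obstacle: the argument is essentially bookkeeping, and the only point that needs a little care is chasing through the several layers of definitions to obtain the displayed description of $\invlumap_\paths\gr{\lu{X}_t = \lu{x}}$ and to make sure the time-index sets underlying $\lu\elementaryevents_u$ and $\elementaryevents_u$ coincide, including the degenerate case $u = \emptytseq$ under the convention that ``$\max u$'' is treated vacuously.
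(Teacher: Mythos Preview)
Your proposal is correct. It differs from the paper's proof in how it passes from the generators to the whole algebra: you use a ``good sets'' argument (the collection $\mathcal{D}$ is an algebra containing $\lu\elementaryevents_u$, hence contains $\lu\eventalgebra_u$), whereas the paper instead invokes an explicit atomic decomposition of any nonempty $\lu{A}_u \in \lu\eventalgebra_u$ as a finite union $\bigcup_{\lu{z}_{w} \in \lu{S}} (\lu{X}_{w} = \lu{z}_{w})$ for some $w = u \cup v$ with $\max u < \min v$ and $\lu{S} \subseteq \lustsp_{w}$, and then computes $\invlumap_\paths$ on this union directly. Both approaches reduce to the same core computation on elementary events; yours is slightly more abstract and avoids having to quote or justify the structural description of elements of a finitely generated algebra, while the paper's version has the minor by-product of exhibiting the explicit cylinder-set form of $\invlumap_\paths(\lu{A}_u)$ (though this explicit form is not used elsewhere).
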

\begin{proof}
	First, we observe that from \eqref{eqn:Set of lumped paths} and \eqref{eqn:invlumap_paths} it follows that $\invlumap_\paths\gr{\lu{A}} = \emptyset$ if and only if $\lu{A} = \emptyset$.
	Next, we fix some $u$ in $\setoftseq$ and some $\lu{A}_u$ in $\lu\eventalgebra_u$ such that $\lu{A}_u \neq \emptyset$.
	Because $\lu\eventalgebra_u$ is an algebra generated by the elementary events in $\lu\elementaryevents_u$---see for instance also \cite[Proof of Lemma~C.3]{2017KrakDeBock}---there is some time sequence $v$ in $\setoftseq$ and a non-empty set of tuples $\lu{S} \subseteq \lustsp_{u \cup v}$ such that $\max u < \min v$ and
	\[
		\lu{A}_u
		= \bigcup_{\lu{z}_{u \cup v} \in \lu{S}} \gr{\lu{X}_v = \lu{z}_v}.
	\]
	If we let $w \coloneqq u \cup v$, then
	\begin{align*}
		\invlumap_\paths\gr{\lu{A}_u}
		&= \set*{\pad \in \paths \colon \lumap \circ \pad \in \bigcup_{\lu{z}_w \in \lu{S}} \gr{\lu{X}_w = \lu{z}_w}} \\
		&= \bigcup_{\lu{z}_w \in \lu{S}} \set{\pad \in \paths \colon \lumap \circ \pad \in \gr{\lu{X}_w = \lu{z}_w}}.
	\end{align*}
	Using the definition of $\gr{\lu{X}_w = \lu{z}_w}$ and \eqref{eqn:Set of lumped paths}, we write this as
	\begin{align*}
		\invlumap_\paths\gr{\lu{A}_u}
		&= \bigcup_{\lu{z}_w \in \lu{S}} \set{\pad \in \paths \colon \gr{\forall t \in w} \lumap\gr{\pad\gr{t}} = \lu{z}_{t}} \\
		&= \bigcup_{\lu{z}_w \in \lu{S}} \gr*{ \bigcap_{t \in w} \set{\pad \in \paths \colon \lumap\gr{\pad\gr{t}} = \lu{z}_t} } \\
		&= \bigcup_{\lu{z}_w \in \lu{S}} \gr*{ \bigcap_{t \in w} \left[ \bigcup_{z_t \in \invlumap\gr{\lu{z}_t}} \gr{X_t = z_t} \right] }.
	\end{align*}
	It is now immediately clear that $\invlumap_\paths\gr{\lu{A}_u}$ is an element of $\eventalgebra_u$.
	\qed
\end{proof}

The inverse $\invlumap_\paths$ naturally suggests a sensible formal definition of the lumped stochastic process~$\problum \colon \luprocdomain{} \to \reals$ where, for all $\gr{\lu{A}_u, \lu{X}_u = \lu{x}_u}$ in $\luprocdomain$,
\begin{equation}
\label{eqn:New definition of lumped stochastic process}
	\problum\gr{\lu{A}_u \cond \lu{X}_u = \lu{x}_u}
	\coloneqq \frac{ \sum_{x_u \in \invlumap\gr{\lu{x}_u}} \prob\gr{ \invlumap_\paths\gr{\lu{A}_u} \cond X_u = x_u} \prob\gr{X_u = x_u} }{\sum_{z_u \in \invlumap\gr{\lu{x}_u}} \probm\gr{X_u = z_u}}.
\end{equation}
That this is a proper definition follows from Lemmas~\ref{lem:Irreducible CTMC:Any path has pos prob} and \ref{lem:invlumap_paths is element of eventalgebra}: we know that $\smash{\sum_{z_u \in \invlumap\gr{\lu{x}_u}} \probm\gr{X_u = x_u} > 0}$ by the former and that \smash{$\invlumap_\paths\gr{\lu{A}_u}$} is in $\eventalgebra_u$ by the latter, which in turn implies that $\gr{\invlumap_\paths\gr{\lu{A}_u}, X_u = x_u}$ is in $\procdomain{}$.
Note that if the conditioning event is $\gr{\lu{X}_\emptytseq{} = \lu{x}_\emptytseq}$, then \eqref{eqn:New definition of lumped stochastic process} reduces to
\begin{equation}
\label{eqn:New defintion of lumped stochastic process:Marginal}
	\problum\gr{\lu{A}_\emptytseq}
	\coloneqq \problum\gr{\lu{A}_\emptytseq \cond \lu{X}_\emptytseq = \lu{x}_\emptytseq}
	% = \frac{ \probm\gr{ \invlumap_\paths\gr{\lu{A}_\emptytseq} \cond X_\emptytseq = x_\emptytseq} \probm\gr{X_\emptytseq = x_\emptytseq} }{\probm\gr{X_\emptytseq = z_\emptytseq}}
	= \probm\gr{\invlumap_\paths\gr{\lu{A}_\emptytseq}}.
\end{equation}
One intuitively expects that this definition yields a stochastic process, and this intuition is verified by the following result.
\begin{theorem}
\label{the:Lumped process is a stochastic process}
	If $\probm$ is a positive and irreducible CTMC and $\lumap \colon \stsp \to \lustsp$ a lumping map, then $\problum \colon \luprocdomain{} \to \reals$, as defined by \eqref{eqn:New definition of lumped stochastic process}, is a stochastic process.
\end{theorem}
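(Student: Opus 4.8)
The plan is to prove that $\problum$ is a \emph{coherent conditional probability} on $\luprocdomain$; by Definition~\ref{def:Stochastic process} this is exactly what it means for $\problum$ to be a stochastic process. Rather than checking Definition~\ref{def:Coherent conditional probability} for $\problum$ directly on its rather intricate domain, I would first extend $\problum$ to the full domain $\ccpevents\gr{\lu\paths} \times \ccpevents\gr{\lu\paths}_{\emptyset}$ and then invoke Lemma~\ref{lem:Coherent conditional probability if and only if it can be extended}: $\problum$ is a coherent conditional probability on $\luprocdomain$ as soon as it can be extended to a coherent conditional probability on all of $\ccpevents\gr{\lu\paths} \times \ccpevents\gr{\lu\paths}_{\emptyset}$.

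To construct such an extension I would \emph{pull back} along $\invlumap_{\paths}$. Since $\prob$ is, by definition, a stochastic process, it is a coherent conditional probability on $\procdomain$, so by Lemma~\ref{lem:Coherent conditional probability if and only if it can be extended} it extends to a coherent conditional probability $\prob^{\star}$ on $\ccpevents\gr{\paths} \times \ccpevents\gr{\paths}_{\emptyset}$. I would then define
\[
	\problum^{\star}\gr{\lu{A} \cond \lu{C}}
	\coloneqq \prob^{\star}\gr{\invlumap_{\paths}\gr{\lu{A}} \cond \invlumap_{\paths}\gr{\lu{C}}}
\]
for all $\lu{A}$ in $\ccpevents\gr{\lu\paths}$ and all non-empty $\lu{C}$ in $\ccpevents\gr{\lu\paths}_{\emptyset}$. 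This is well defined because, by \eqref{eqn:Set of lumped paths} and \eqref{eqn:invlumap_paths}, $\invlumap_{\paths}\gr{\lu{C}} = \emptyset$ only when $\lu{C} = \emptyset$, so the conditioning event on the right lies in $\ccpevents\gr{\paths}_{\emptyset}$.

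Next I would verify that $\problum^{\star}$ is coherent by checking Definition~\ref{def:Coherent conditional probability}. The crucial elementary fact is that for every path $\pad$ in $\paths$ and every $\lu{A} \subseteq \lu\paths$, $\indica{\invlumap_{\paths}\gr{\lu{A}}}\gr{\pad} = \indica{\lu{A}}\gr{\lumap \circ \pad}$, which is immediate from \eqref{eqn:invlumap_paths}. Given $\gr{\lu{A}_1, \lu{C}_1}, \dots, \gr{\lu{A}_n, \lu{C}_n}$ and $\lambda_1, \dots, \lambda_n$, this identity shows that the quantity $\sum_{i=1}^{n} \lambda_i \indica{\lu{C}_i}\gr{\lu\pad} \gr*{\problum^{\star}\gr{\lu{A}_i \cond \lu{C}_i} - \indica{\lu{A}_i}\gr{\lu\pad}}$ evaluated at $\lu\pad = \lumap \circ \pad$ equals the corresponding quantity for $\prob^{\star}$ and the events $\gr{\invlumap_{\paths}\gr{\lu{A}_i}, \invlumap_{\paths}\gr{\lu{C}_i}}$ evaluated at $\pad$. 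Since $\invlumap_{\paths}$ commutes with unions and sends non-empty sets to non-empty sets, $\bigcup_i \invlumap_{\paths}\gr{\lu{C}_i}$ is non-empty and every $\pad$ in it satisfies $\lumap \circ \pad \in \bigcup_i \lu{C}_i$; hence the maximum over $\lu\pad$ in $\bigcup_i \lu{C}_i$ is at least the maximum over $\pad$ in $\bigcup_i \invlumap_{\paths}\gr{\lu{C}_i}$, and the latter is non-negative by the coherence of $\prob^{\star}$. Thus $\problum^{\star}$ satisfies Definition~\ref{def:Coherent conditional probability}.

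Finally I would check that $\problum^{\star}$ agrees with $\problum$ on $\luprocdomain$. Fixing $\gr{\lu{A}_u, \lu{X}_u = \lu{x}_u}$ in $\luprocdomain$, Lemma~\ref{lem:invlumap_paths is element of eventalgebra} gives $\invlumap_{\paths}\gr{\lu{A}_u} \in \eventalgebra_u$, and \eqref{eqn:invlumap_paths:state instantiation} expresses $\invlumap_{\paths}\gr{\lu{X}_u = \lu{x}_u}$ as the \emph{disjoint} union $\bigcup_{x_u \in \invlumap\gr{\lu{x}_u}} \gr{X_u = x_u}$, whose $\prob$-probability is $\sum_{z_u \in \invlumap\gr{\lu{x}_u}} \prob\gr{X_u = z_u} > 0$ by Lemma~\ref{lem:Irreducible CTMC:Any path has pos prob}. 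Applying Bayes' rule to $\prob^{\star}$, then finite additivity over this disjoint union, then Bayes' rule once more on each summand — where $\prob^{\star}$ may be replaced by $\prob$ because all events now lie in $\procdomain$ — rewrites $\problum^{\star}\gr{\lu{A}_u \cond \lu{X}_u = \lu{x}_u}$ as precisely the right-hand side of \eqref{eqn:New definition of lumped stochastic process}, i.e.\ as $\problum\gr{\lu{A}_u \cond \lu{X}_u = \lu{x}_u}$. Hence $\problum$ extends to the coherent conditional probability $\problum^{\star}$, so by Lemma~\ref{lem:Coherent conditional probability if and only if it can be extended} it is itself a coherent conditional probability, i.e.\ a stochastic process. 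I expect the main obstacle to be this last reduction: one must track the conditioning events carefully so that Bayes' rule applies with all relevant pairs in the domain, use the disjointness in \eqref{eqn:invlumap_paths:state instantiation} for additivity, and lean on the positivity and irreducibility of $\prob$ (through Lemma~\ref{lem:Irreducible CTMC:Any path has pos prob}) to ensure the denominator is strictly positive.
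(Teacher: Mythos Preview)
Your proposal is correct and follows essentially the same route as the paper: extend $\prob$ to a coherent $\prob^{\star}$ on all of $\ccpevents\gr{\paths}\times\ccpevents\gr{\paths}_{\emptyset}$, pull back via $\invlumap_{\paths}$ to define $\problum^{\star}$, check coherence of $\problum^{\star}$ using the indicator identity $\indica{\lu{A}}\gr{\lumap\circ\pad}=\indica{\invlumap_{\paths}\gr{\lu{A}}}\gr{\pad}$, and then verify that $\problum^{\star}$ restricts to $\problum$ on $\luprocdomain$ via Bayes' rule, additivity over the disjoint union in \eqref{eqn:invlumap_paths:state instantiation}, and Lemma~\ref{lem:Irreducible CTMC:Any path has pos prob}. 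The only cosmetic difference is that the paper shows the two ``$S$''-sets in the coherence check are literally equal (using \eqref{eqn:invlumap_paths is inverse}), whereas you argue the needed inequality; your direction suffices.
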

\begin{proof}
	To prove the stated, we take a little detour.
	First, we combine Definition~\ref{def:HCTMC as stochastic process}, Definition~\ref{def:Stochastic process} and Lemma~\ref{lem:Coherent contional probability can be extended} to see that $\probm$ can be extended to a coherent conditional probability $\probm^{\star}$ on $\ccpevents\gr{\paths} \times \ccpevents\gr{\paths}_{\emptyset}$.
	We take any such coherent extension $\probm^{\star}$, and use it to construct the real-valued map $\problum^{\star}$ on $\ccpevents\gr{\lu\paths} \times \ccpevents\gr{\lu\paths}_{\emptyset}$, defined by
	\begin{equation}
	\label{eqn:definition of problumstar}
		\problum^{\star}\gr{\lu{A} \cond \lu{C}}
		\coloneqq \probm^{\star}\gr{\invlumap_\paths\gr{\lu{A}} \cond \invlumap_\paths\gr{\lu{C}}}
		\quad\text{for all $\gr{\lu{A}, \lu{C}} \in \ccpevents\gr{\lu\paths} \times \ccpevents\gr{\lu\paths}_{\emptyset}$.}
	\end{equation}
	We will show that $\problum$ is a stochastic process by verifying that $\problum^{\star}$ is its (coherent) extension to $\ccpevents\gr{\lu\paths} \times \ccpevents\gr{\lu\paths}_{\emptyset}$, after which we can simply invoke Lemma~\ref{lem:Coherent conditional probability if and only if it can be extended}.

	To that end, we first verify that $\problum^{\star}$ is a coherent conditional probability.
	Therefore, we fix any $n$ in $\nats$, $\gr{\lu{A}_1, \lu{C}_1 }$, $\dots$, $\gr{\lu{A}_n, \lu{C}_n}$ in $\ccpevents\gr{\lu\paths} \times \ccpevents\gr{\lu\paths}_{\emptyset}$ and $\lambda_1$, $\dots$, $\lambda_n$ in $\reals$ and show that $\max S \geq 0$, where
	\[
		S
		\coloneqq \set*{ \sum_{i=1}^{n}\lambda_i \indica{\lu{C}_i}\gr{\lu\pad} \gr*{\problum^{\star}\gr{\lu{A}_i \cond \lu{C}_i} - \indica{\lu{A}_i}\gr{\lu\pad}} \colon \lu\pad \in \bigcup_{i=1}^{n} \lu{C}_i }.
	\]
	Substituting \eqref{eqn:definition of problumstar} yields
	\[
		S
		= \set*{ \sum_{i=1}^{n}\lambda_i \indica{\lu{C}_i}\gr{\lu\pad} \gr*{\probm^{\star}\gr{\invlumap_\paths\gr{\lu{A}_i} \cond \invlumap_\paths\gr{\lu{C}_i}} - \indica{\lu{A}_i}\gr{\lu\pad}} \colon \lu\pad \in \bigcup_{i=1}^{n} \lu{C}_i }.
	\]
	Furthermore, using \eqref{eqn:invlumap_paths} and \eqref{eqn:invlumap_paths is inverse} yields
	\[
		S
		= \set*{ \sum_{i=1}^{n}\lambda_i \indica{\lu{C}_i}\gr{\lumap \circ \pad} \gr*{\probm^{\star}\gr{\invlumap_\paths\gr{\lu{A}_i} \cond \invlumap_\paths\gr{\lu{C}_i}} - \indica{\lu{A}_i}\gr{\lumap \circ \pad}} \colon \pad \in \bigcup_{i=1}^{n} \invlumap_\paths\gr{\lu{C}_i} }.
	\]
	Observe that for all $\pad$ in $\paths$ and $\lu{A} \subseteq \lu\paths$,
	\begin{equation}
	\label{eqn:equality of indicators}
		\indica{\lu{A}}\gr{\lumap\circ\pad}
		= \begin{cases}
			1 &\text{if } \lumap \circ \pad \in \lu{A} \\
			0 &\text{otherwise}
		\end{cases}
		= \begin{cases}
			1 &\text{if } \pad \in \invlumap_\paths\gr{\lu{A}} \\
			0 &\text{otherwise}
		\end{cases}
		= \indica{\invlumap_\paths\gr{\lu{A}}}\gr{\pad},
	\end{equation}
	where the second equality follows immediately from \eqref{eqn:invlumap_paths}.
	We substitute \eqref{eqn:equality of indicators} in our expression for $S$, to yield
	\[
		S
		= \set*{ \sum_{i=1}^{n}\lambda_i \indica{C_i}\gr{\pad} \gr*{\probm^{\star}\gr{A_i \cond C_i} - \indica{A_i}\gr{\pad}} \colon \pad \in \bigcup_{i=1}^{n} C_i },
	\]
	where, for all $i$ in $\set{1, \dots, n}$, we let $A_i \coloneqq \invlumap_\paths\gr{\lu{A}_i}$ and $C_i \coloneqq \invlumap_\paths\gr{\lu{C}_i}$.
	Because $\probm^{\star}$ is a coherent conditional probability on $\ccpevents\gr{\paths} \times \ccpevents\gr{\paths}_{\emptyset}$, it follows from Definition~\ref{def:Coherent conditional probability} that $\max S \geq 0$.

	Next, we verify that $\problum^\star$ coincides with $\problum$ on $\luprocdomain{}$.
	To that end, we fix any $\gr{\lu{A}_u, \lu{X}_u = \lu{x}_u}$ in $\luprocdomain{}$.
	Then by \eqref{eqn:definition of problumstar},
	\[
		\problum^\star\gr{\lu{A}_u \cond \lu{X}_u = \lu{x}_u}
		= \probm^\star\gr{\invlumap_\paths\gr{\lu{A}_u} \cond \invlumap_\paths\gr{\lu{X}_u = \lu{x}_u}}.
	\]
	As $\probm^\star$ is a coherent conditional probability on $\ccpevents\gr{\stsp} \times \ccpevents\gr{\stsp}_{\emptyset}$, it follows from Lemma~\ref{lem:Coherent conditional probability satisfies the laws of probability}\ref{LOP:Bayes rule} that
	\begin{multline}
	\label{eqn:Bayes rule in proof of coherence}
		\probm^\star\gr{\invlumap_\paths\gr{\lu{A}_u} \cond \invlumap_\paths\gr{\lu{X}_u = \lu{x}_u} } \probm^\star \gr{ \invlumap_\paths\gr{\lu{X}_u = \lu{x}_u} } \\
		= \probm^\star\gr{\invlumap_\paths\gr{\lu{A}_u} \cap \invlumap_\paths\gr{\lu{X}_u = \lu{x}_u}},
	\end{multline}
	where $\probm^\star \gr{ \invlumap_\paths\gr{\lu{X}_u = \lu{x}_u} } \coloneqq \probm^\star \gr{ \invlumap_\paths\gr{\lu{X}_u = \lu{x}_u} \cond \paths}$.

	Recall from \eqref{eqn:invlumap_paths:state instantiation} that
	\[
		\invlumap_\paths\gr{\lu{X}_u = \lu{x}_u} = \bigcup_{z_u \in \invlumap\gr{\lu{x}_u}} \gr{X_u = z_u},
	\]
	which clearly is an element of $\eventalgebra_\emptytseq$.
	Consequently, $\gr{\cup_{z_u \in \invlumap\gr{\lu{x}_u}} \gr{X_u = z_u}, \paths}$ is an element of $\procdomain{}$.
	Since furthermore $\probm^\star$ is an extension of $\probm$, we find that
	\begin{equation}
	\label{eqn:Bayes rule in proof of coherence:1}
		\probm^\star \gr{ \invlumap_\paths\gr{\lu{X}_u = \lu{x}_u} }
		= \probm \gr{ \cup_{z_u \in \invlumap\gr{\lu{x}_u}} \gr{X_u = z_u} } = \sum_{z_u \in \invlumap\gr{\lu{x}_u}} \probm \gr{ X_u = z_u }.
	\end{equation}
	Recall from Lemma~\ref{lem:invlumap_paths is element of eventalgebra} that $\invlumap_\paths\gr{\lu{A}_u}$ is an element of $\eventalgebra_u$, such that $\invlumap_\paths\gr{\lu{A}_u} \cap \invlumap_\paths\gr{\lu{X}_u = \lu{x}_u}$ is an element of $\eventalgebra_u$ as well.
	Consequently, we now find that
	\begin{multline}
	\label{eqn:Bayes rule in proof of coherence:2}
		\probm^\star\gr{\invlumap_\paths\gr{\lu{A}_u} \cap \invlumap_\paths\gr{\lu{X}_u = \lu{x}_u}}
		= \probm\gr{\invlumap_\paths\gr{\lu{A}_u} \cap \invlumap_\paths\gr{\lu{X}_u = \lu{x}_u}} \\
		= \probm\gr{\invlumap_\paths\gr{\lu{A}_u} \cap \gr{\cup_{x_u \in \lu{x}_u} \gr{X_u = x_u}}}
		= \sum_{x_u \in \lu{x}_u} \probm\gr{\invlumap_\paths\gr{\lu{A}_u} \cap \gr{X_u = x_u}} \\
		= \sum_{x_u \in \lu{x}_u} \probm\gr{\invlumap_\paths\gr{\lu{A}_u} \cond X_u = x_u} \probm\gr{X_u = x_u}.
	\end{multline}

	Since we know from Lemma~\ref{lem:Irreducible CTMC:Any path has pos prob} that $\sum_{z_u \in \invlumap\gr{\lu{x}_u}} \probm \gr{X_u = z_u} > 0$, substituting \eqref{eqn:Bayes rule in proof of coherence:1} and \eqref{eqn:Bayes rule in proof of coherence:2} in \eqref{eqn:Bayes rule in proof of coherence} yields
	\begin{align*}
		\problum^\star\gr{\lu{A}_u \cond \lu{X}_u = \lu{x}_u}
		&= \probm^\star\gr{\invlumap_\paths\gr{\lu{A}_u} \cond \invlumap_\paths\gr{\lu{X}_u = \lu{x}_u} } \\
		&= \frac{\sum_{x_u \in \invlumap\gr{\lu{x}_u}} \probm\gr{\invlumap_\paths\gr{\lu{A}_u} \cond X_u = x_u} \probm\gr{X_u = x_u}}{\sum_{z_u \in \invlumap\gr{\lu{x}_u}} \probm \gr{ X_u = z_u}} \\
		&= \problum\gr{\lu{A}_u \cond \lu{X}_u = \lu{x}_u}.
	\end{align*}
	Hence, the coherent conditional probability $\problum^\star$ on $\ccpevents\gr{\lustsp} \times \ccpevents\gr{\lustsp}_{\emptyset}$ coincides with the real-valued map $\problum$ on $\luprocdomain{}$.
	It now follows from Lemma~\ref{lem:Coherent conditional probability if and only if it can be extended} that $\problum$ is a coherent conditional probability on $\luprocdomain{}$, such that it is a stochastic process by Definition~\ref{def:Stochastic process}.
	\qed
\end{proof}

\begin{lemma}
\label{lem:Lumped process:One step conditional}
	If $\probm$ is a positive and irreducible CTMC and $\lumap \colon \stsp \to \lustsp$ a lumping map, then for all $t$ in $\nnegreals$, $u$ in $\setoftseql{t}$, $\lu{y}$ in $\lustsp$ and $\lu{x}_u$ in $\lustsp_u$,
	\begin{multline}
	\label{eqn:Lumped process:One step conditional}
		\problum\gr{\lu{X}_t = \lu{y} \cond \lu{X}_u = \lu{x}_u} \\
		= \frac{
			\sum_{x_u \in \invlumap\gr{\lu{x}_u}} \sum_{y \in \invlumap\gr{\lu{y}}} \probm\gr{X_t = y \cond X_u = x_u} \probm\gr{X_u = x_u}
		}{
			\sum_{z_u \in \invlumap\gr{\lu{x}_u}} \probm\gr{X_u = z _u}
		}.
	\end{multline}
\end{lemma}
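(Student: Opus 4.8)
The plan is to derive \eqref{eqn:Lumped process:One step conditional} by specialising the defining relation \eqref{eqn:New definition of lumped stochastic process} of $\problum$ to the event $\lu{A}_u = \gr{\lu{X}_t = \lu{y}}$ and then simplifying the inverse image $\invlumap_\paths\gr{\lu{X}_t = \lu{y}}$ that appears.

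First I would verify that \eqref{eqn:New definition of lumped stochastic process} is applicable. Since $u$ lies in $\setoftseql{t}$, we have $\max u < t$, so $t$ belongs to $[\max u, +\infty)$; hence $\gr{\lu{X}_t = \lu{y}}$ is an element of $\lu\elementaryevents_u$ and therefore of the algebra $\lu\eventalgebra_u$. Consequently $\gr{\gr{\lu{X}_t = \lu{y}}, \lu{X}_u = \lu{x}_u}$ is in $\luprocdomain$, and substituting $\lu{A}_u = \gr{\lu{X}_t = \lu{y}}$ into \eqref{eqn:New definition of lumped stochastic process} yields
\[
	\problum\gr{\lu{X}_t = \lu{y} \cond \lu{X}_u = \lu{x}_u}
	= \frac{\sum_{x_u \in \invlumap\gr{\lu{x}_u}} \probm\gr{\invlumap_\paths\gr{\lu{X}_t = \lu{y}} \cond X_u = x_u} \probm\gr{X_u = x_u}}{\sum_{z_u \in \invlumap\gr{\lu{x}_u}} \probm\gr{X_u = z_u}},
\]
which is well defined because the denominator is strictly positive by Lemma~\ref{lem:Irreducible CTMC:Any path has pos prob}.

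Next I would simplify the numerator. Applying \eqref{eqn:invlumap_paths:state instantiation} to the singleton time sequence consisting of $t$ alone gives $\invlumap_\paths\gr{\lu{X}_t = \lu{y}} = \bigcup_{y \in \invlumap\gr{\lu{y}}} \gr{X_t = y}$, and this union is disjoint because the basic events $\gr{X_t = y}$, $y \in \invlumap\gr{\lu{y}}$, are pairwise disjoint. Since $\probm$ is a stochastic process and hence a coherent conditional probability, the additivity of a coherent conditional probability (Lemma~\ref{lem:Coherent conditional probability satisfies the laws of probability}) then gives $\probm\gr{\invlumap_\paths\gr{\lu{X}_t = \lu{y}} \cond X_u = x_u} = \sum_{y \in \invlumap\gr{\lu{y}}} \probm\gr{X_t = y \cond X_u = x_u}$. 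Substituting this into the display above produces exactly the right-hand side of \eqref{eqn:Lumped process:One step conditional}.

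I do not expect a real obstacle here: the argument is a routine unfolding of the definition of $\problum$, and the only two points that warrant attention---both already established in the preceding subsection---are that $\gr{\lu{X}_t = \lu{y}}$ belongs to $\lu\eventalgebra_u$ (which is exactly where the hypothesis $u \in \setoftseql{t}$ is used) and that its inverse image under $\invlumap_\paths$ is the union of the basic events over the fibre $\invlumap\gr{\lu{y}}$.
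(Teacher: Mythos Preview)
Your argument is correct and is precisely the unfolding that the paper has in mind: its proof consists of the single sentence ``Follows immediately from \eqref{eqn:invlumap_paths:state instantiation} and \eqref{eqn:New definition of lumped stochastic process}'', and you have supplied exactly those two ingredients with the appropriate justification of domain membership and additivity.
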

\begin{proof}
	Follows immediately from \eqref{eqn:invlumap_paths:state instantiation} and \eqref{eqn:New definition of lumped stochastic process}.
	\qed
\end{proof}

	\subsection{The Instantaneous Transition Rate Matrix of the Lumped Stochastic Process}
For any $t$ in $\nnegreals$, $u$ in $\setoftseql{t}$ and $\lu{x}_u$ in $\lustsp_u$, we consider the real-valued map~$\dist_{\gr{u, \lu{x}_u, t}}$ that maps any $x$ in $\stsp$ to
\begin{equation}
\label{eqn:dist u x_u t}
	% \dist_{\gr{u, \lu{x}_u, t}}
	% \colon \stsp \to \nnegreals
	% \colon x \mapsto
	\dist_{\gr{u, \lu{x}_u, t}}\gr{x}
	\coloneqq \frac{
			\sum_{x_u \in \invlumap\gr{\lu{x}_u}} \probm\gr{X_u = x_u, X_t = x }
		}{
			\sum_{z_u \in \invlumap\gr{\lu{x}_u}} \probm\gr{X_u = z_u}
		}.
\end{equation}
We use this notation in the following result, which provides the main motivation for seeing the lumped process as belonging to an imprecise CTMC that is consistent with a specific set of transition rate matrices.
\begin{proposition}
\label{prop:Instantaneous transition rate of the lumped stochastic process}
	If $\markov$ is a positive and irreducible CTMC and $\lumap \colon \stsp \to \lustsp$ a lumping map, then, for all $t$ in $\nnegreals{}$, $u$ in $\setoftseql{t}$, $\lu{x}, \lu{y}$ in $\lustsp$ and $\lu{x}_{u}$ in $\lustsp_{u}$,
	\begin{multline}
	\label{eqn:inst trm:from the right}
		\lim_{\Delta \to 0^{+}} \frac{1}{\Delta} \gr*{\problum\gr{ \lu{X}_{t+\Delta} = \lu{y} \cond \lu{X}_{u} = \lu{x}_{u}, \lu{X}_{t} = \lu{x} } - \indica{\lu{x}}\gr{\lu{y}} } \\
		= \sum_{x \in \invlumap\gr{\lu{x}}} \frac{
			\dist_{\gr{u, \lu{x}_{u}, t}} \gr{x}
		}{
			\sum_{z \in \invlumap\gr{\lu{x}}} \dist_{\gr{u, \lu{x}_{u}, t}} \gr{z}
		} \sum_{y \in \invlumap\gr{\lu{y}}} \trm\gr{x, y},
	\end{multline}
	which is a convex combination of terms $\sum_{y \in \invlumap\gr{\lu{y}}} \trm\gr{x, y}$ with $x$ in $\invlumap\gr{\lu{x}}$, and, if $t \neq 0$, also
	\begin{multline}
	\label{eqn:inst trm:from the left}
		\lim_{\Delta \to 0^{+}} \frac{1}{\Delta} \gr*{ \problum\gr{ \lu{X}_{t} = \lu{y} \cond \lu{X}_{u} = \lu{x}_{u}, \lu{X}_{t - \Delta} = \lu{x} } - \indica{\lu{x}}\gr{\lu{y}} } \\
		= \sum_{x \in \invlumap\gr{\lu{x}}} \frac{
			\dist_{\gr{u, \lu{x}_{u}, t}} \gr{x}
		}{
			\sum_{z \in \invlumap\gr{\lu{x}}} \dist_{\gr{u, \lu{x}_{u}, t}} \gr{z}
		} \sum_{y \in \invlumap\gr{\lu{y}}} \trm\gr{x, y}.
	\end{multline}
\end{proposition}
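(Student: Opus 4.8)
The plan is to reduce the statement to Lemma~\ref{lem:Lumped process:One step conditional} together with the first-order expansion $\tm_{\Delta} = I + \Delta\trm + o(\Delta)$ of the precise transition matrix. I would begin with \eqref{eqn:inst trm:from the right}. Fix $\Delta$ in $\posreals$, set $w \coloneqq u \cup \set{t}$---which lies in $\setoftseql{t+\Delta}$---and let $\lu{x}_w$ be the tuple extending $\lu{x}_u$ by $\lu{x}$ at time $t$. Lemma~\ref{lem:Lumped process:One step conditional}, applied with the sequence $w$ and the time point $t + \Delta$, expresses $\problum\gr{\lu{X}_{t+\Delta} = \lu{y} \cond \lu{X}_u = \lu{x}_u, \lu{X}_t = \lu{x}}$ as a ratio of sums over $\invlumap\gr{\lu{x}_w}$; the Markov property \eqref{eqn:HCTMC Markov}, homogeneity \eqref{eqn:HCTMC Homogeneity} and \eqref{eqn:HCTMC transition} then let me replace each inner factor $\probm\gr{X_{t+\Delta} = y \cond X_w = x_w}$ by $\tm_{\Delta}\gr{x, y}$, where $x$ is the time-$t$ component of $x_w$. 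Splitting the sum over $x_w$ into sums over $x_u$ in $\invlumap\gr{\lu{x}_u}$ and $x$ in $\invlumap\gr{\lu{x}}$, cancelling the common factor $\sum_{z_u \in \invlumap\gr{\lu{x}_u}} \probm\gr{X_u = z_u}$ and recognising the definition \eqref{eqn:dist u x_u t}, I would obtain
\[
	\problum\gr{\lu{X}_{t+\Delta} = \lu{y} \cond \lu{X}_u = \lu{x}_u, \lu{X}_t = \lu{x}}
	= \frac{ \sum_{x \in \invlumap\gr{\lu{x}}} \dist_{\gr{u, \lu{x}_u, t}}\gr{x} \sum_{y \in \invlumap\gr{\lu{y}}} \tm_{\Delta}\gr{x, y} }{ \sum_{z \in \invlumap\gr{\lu{x}}} \dist_{\gr{u, \lu{x}_u, t}}\gr{z} },
\]
where the cancelled factor, and hence also the denominator above, is strictly positive by Lemma~\ref{lem:Irreducible CTMC:Any path has pos prob}.

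Next I would substitute $\tm_{\Delta} = I + \Delta\trm + o(\Delta)$, which follows from \eqref{eqn:Matrix exponential} and is uniform in the finitely many entries since $\stsp$ is finite. Because $\invlumap\gr{\lu{x}}$ and $\invlumap\gr{\lu{y}}$ are either disjoint or equal, $\sum_{y \in \invlumap\gr{\lu{y}}} I\gr{x, y} = \indica{\lu{x}}\gr{\lu{y}}$ for every $x$ in $\invlumap\gr{\lu{x}}$, so the zeroth-order part of the numerator equals $\indica{\lu{x}}\gr{\lu{y}}$ times the denominator. Subtracting $\indica{\lu{x}}\gr{\lu{y}}$, dividing by $\Delta$, and letting $\Delta \to 0^{+}$---the remainder vanishing because the denominator stays bounded away from zero---then yields the right-hand side of \eqref{eqn:inst trm:from the right}. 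That this is a convex combination is immediate, since the coefficients $\dist_{\gr{u, \lu{x}_u, t}}\gr{x} / \sum_{z \in \invlumap\gr{\lu{x}}} \dist_{\gr{u, \lu{x}_u, t}}\gr{z}$ are non-negative and sum to one over $x$ in $\invlumap\gr{\lu{x}}$.

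For \eqref{eqn:inst trm:from the left} I would run the same argument with $v \coloneqq u \cup \set{t - \Delta}$ in place of $w$; this is legitimate once $\Delta$ is small enough that $0 \leq t - \Delta$ and $t - \Delta > \max u$, which is possible precisely because $t \neq 0$. The one new---and, I expect, main---obstacle is that the weights are now $\dist_{\gr{u, \lu{x}_u, t - \Delta}}\gr{x}$, which themselves depend on $\Delta$. I would resolve this by a continuity argument: writing $\probm\gr{X_u = x_u, X_{t-\Delta} = x}$ as a sum of products of entries of $\inidist$ and transition matrices $\tm_s$, exactly as in the proof of Lemma~\ref{lem:Irreducible CTMC:Any path has pos prob}, and using that $s \mapsto \tm_s$ is continuous, one gets $\dist_{\gr{u, \lu{x}_u, t - \Delta}}\gr{x} \to \dist_{\gr{u, \lu{x}_u, t}}\gr{x}$ as $\Delta \to 0^{+}$, with strictly positive limit. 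Feeding this into the same $\tm_{\Delta} = I + \Delta\trm + o(\Delta)$ computation---where the two sources of $\Delta$-dependence combine harmlessly via the product rule---produces \eqref{eqn:inst trm:from the left}.
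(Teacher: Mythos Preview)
Your proposal is correct and follows essentially the same route as the paper: you derive inline what the paper packages as Lemma~\ref{lem:Transition probability of the lumped stochastic process} (the formula in terms of $\tm_{\Delta}$ and the weights $\dist_{\gr{u,\lu{x}_u,t}}$), and your continuity argument for $\dist_{\gr{u,\lu{x}_u,t-\Delta}} \to \dist_{\gr{u,\lu{x}_u,t}}$ is exactly the content of the paper's Lemma~\ref{lem:dist_u luxu t is continuouos in t}. The only cosmetic difference is that the paper invokes Lemma~\ref{lem:trm as limit} entrywise, whereas you phrase the same thing as the expansion $\tm_{\Delta} = I + \Delta\trm + o(\Delta)$.
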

Before proving this result, we first state and, if necessary, prove three intermediary technical results.
\begin{lemma}
\label{lem:The fraction is a positive distribution}
	Let $\markov$ be a positive and irreducible CTMC.
	If $t$ in $\nnegreals$, $u$ in $\setoftseql{t}$ and $\lu{x}_{u}$ in $\lustsp_{u}$, then $\dist_{\gr{u, \lu{x}_u, t}}$ is a positive distribution on $\stsp$.
\end{lemma}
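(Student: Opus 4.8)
The plan is to check, in turn, the three requirements for $\dist_{\gr{u, \lu{x}_{u}, t}}$ to be a positive distribution on $\stsp$: that the fraction in \eqref{eqn:dist u x_u t} is well-defined (its denominator is strictly positive), that $\dist_{\gr{u, \lu{x}_{u}, t}}\gr{x} > 0$ for every $x$ in $\stsp$, and that $\sum_{x \in \stsp} \dist_{\gr{u, \lu{x}_{u}, t}}\gr{x} = 1$. The whole argument rests on two observations: every probability occurring in \eqref{eqn:dist u x_u t} is the probability of a state assignment along a non-empty increasing time sequence, so Lemma~\ref{lem:Irreducible CTMC:Any path has pos prob} applies to it; and the relevant events all lie in the domain $\procdomain$, so the laws of probability from Lemma~\ref{lem:Coherent conditional probability satisfies the laws of probability} are available.

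For the denominator, note first that $\invlumap\gr{\lu{x}_{u}}$ is non-empty: it is a product of fibres of the surjection $\lumap$, each of which is non-empty. If $u = \emptytseq$ the denominator equals $\probm\gr{\paths} = 1$, and otherwise Lemma~\ref{lem:Irreducible CTMC:Any path has pos prob} makes every summand $\probm\gr{X_u = z_u}$ strictly positive; either way the denominator is strictly positive and the fraction is well-posed. For positivity, fix $x$ in $\stsp$ and $x_u$ in $\invlumap\gr{\lu{x}_u}$. Since $u$ is in $\setoftseql{t}$ we have $\max u < t$, so $\gr{X_u = x_u, X_t = x}$ is a state assignment along the non-empty increasing sequence obtained by appending $t$ to $u$, whence $\probm\gr{X_u = x_u, X_t = x} > 0$ by Lemma~\ref{lem:Irreducible CTMC:Any path has pos prob}. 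Summing these strictly positive numbers over the non-empty set $\invlumap\gr{\lu{x}_u}$ and dividing by the strictly positive denominator gives $\dist_{\gr{u, \lu{x}_{u}, t}}\gr{x} > 0$.

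Finally, for the normalisation, recall that the events $\gr{X_u = x_u, X_t = x}$, as $x$ ranges over the finite set $\stsp$, are pairwise disjoint with union $\gr{X_u = x_u}$, so finite additivity (Lemma~\ref{lem:Coherent conditional probability satisfies the laws of probability}) yields $\sum_{x \in \stsp} \probm\gr{X_u = x_u, X_t = x} = \probm\gr{X_u = x_u}$. Interchanging the two finite sums in $\sum_{x \in \stsp} \dist_{\gr{u, \lu{x}_{u}, t}}\gr{x}$ therefore collapses its numerator to $\sum_{x_u \in \invlumap\gr{\lu{x}_u}} \probm\gr{X_u = x_u}$, which is exactly the denominator, so the sum equals $1$. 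I do not foresee a real difficulty in any of this; the only things meriting a moment's care are the degenerate case $u = \emptytseq$ for the denominator and keeping track of the fact that $\max u < t$ guarantees the joint events belong to $\procdomain$ so that the invoked lemmas apply.
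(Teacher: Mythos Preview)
Your proof is correct and follows essentially the same approach as the paper: both rely on Lemma~\ref{lem:Irreducible CTMC:Any path has pos prob} for well-definedness and positivity, and both use the identity $\sum_{x \in \stsp} \probm\gr{X_u = x_u, X_t = x} = \probm\gr{X_u = x_u}$ for the normalisation (you collapse the numerator, the paper expands the denominator, which amounts to the same thing). Your treatment is slightly more explicit about the edge case $u = \emptytseq$ and about which lemmas justify each step, but the argument is the same.
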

\begin{proof}
	For any $x$ in $\stsp$,
	\[
		\dist_{\gr{u, \lu{x}_u, t}}\gr{x}
		= \frac{
			\sum_{x_u \in \invlumap\gr{\lu{x}_u}} \probm\gr{X_{u} = x_u, X_{t} = x}
		}{
			\sum_{z_u \in \invlumap\gr{\lu{x}_u}} \probm\gr{X_{u} = z_u}
		}
	\]
	is a well-defined (in the sense that we do not divide by zero) positive real number due to Lemma~\ref{lem:Irreducible CTMC:Any path has pos prob}.
	Hence, since
	\begin{align*}
		\sum_{x \in \stsp} \dist_{\gr{u, \lu{x}_u, t}}\gr{x}
		&= \sum_{x \in \stsp} \frac{
			\sum_{x_u \in \invlumap\gr{\lu{x}_u}} \probm\gr{X_u = x_u, X_t = x }
		}{
			\sum_{x_u \in \invlumap\gr{\lu{x}_u}} \probm\gr{X_u = x_u}
		} \\
		&= \sum_{x \in \stsp}\frac{
			\sum_{x_u \in \invlumap\gr{\lu{x}_u}} \probm\gr{X_{u} = x_u, X_{t} = x}
		}{
			\sum_{z \in \stsp} \sum_{x_u \in \invlumap\gr{\lu{x}_u}} \probm\gr{X_{u} = x_u, X_{t} = z}
		}
		= 1,
	\end{align*}
	$\dist_{\gr{u, \lu{x}_u, t}}$ is a positive distribution on $\stsp$.
	\qed
\end{proof}

\begin{lemma}[Theorem~2.1.1 in \cite{1997Norris}]
\label{lem:trm as limit}
	Let $\trm$ be a transition rate matrix.
	Then for all $t$ in $\nnegreals$ and all $x, y$ in $\stsp$,
	\[
		\lim_{\Delta \to 0^+} \frac{\tm_{t+\Delta}\gr{x, y} - \tm_{t}\gr{x, y}}{\Delta}
		= [\trm \tm_t] \gr{x, y}
	\]
	and, if $t \neq 0$,
	\[
		\lim_{\Delta \to 0^+} \frac{\tm_{t}\gr{x, y} - \tm_{t-\Delta}\gr{x, y}}{\Delta}
		= [\trm \tm_t] \gr{x, y}.
	\]
\end{lemma}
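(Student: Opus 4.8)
The plan is to derive both limits from three standard properties of the matrix exponential $\tm_t = e^{t\trm}$, all of which follow from the power-series representation $\tm_t = \sum_{k=0}^{\infty} t^k \trm^k / k!$ that is equivalent to the limit definition in \eqref{eqn:Matrix exponential}: (i) the semigroup property $\tm_{s+t} = \tm_s \tm_t$ for all $s, t$ in $\nnegreals$; (ii) the right derivative at the origin, $\lim_{\Delta \to 0^+} \Delta^{-1}\gr{\tm_\Delta - I} = \trm$; and (iii) continuity of the map $t \mapsto \tm_t$.

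First I would establish (i) and (iii). Because $s\trm$ and $t\trm$ commute, multiplying the two absolutely convergent series and regrouping via the binomial theorem gives $e^{s\trm} e^{t\trm} = e^{(s+t)\trm}$, which is (i). Absolute convergence of the series in operator norm---with $\norm{\tm_t} \leq e^{t\norm{\trm}}$---also makes $t \mapsto \tm_t$ continuous, settling (iii). For (ii), I would subtract the leading term and bound the tail: $\Delta^{-1}\gr{\tm_\Delta - I} - \trm = \sum_{k \geq 2} \Delta^{k-1}\trm^k / k!$, whose norm is at most $\Delta \norm{\trm}^2 e^{\Delta\norm{\trm}}$ and hence tends to $0$ as $\Delta \to 0^+$.

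With these in hand, the right-derivative claim is immediate: using the semigroup property to write $\tm_{t+\Delta} = \tm_\Delta \tm_t$, I obtain
\[
	\frac{\tm_{t+\Delta} - \tm_t}{\Delta}
	= \frac{\tm_\Delta - I}{\Delta}\,\tm_t,
\]
and letting $\Delta \to 0^+$ while invoking (ii) yields $\trm \tm_t$; reading off the $\gr{x, y}$ entry gives the first stated limit. For the left-derivative claim I would fix $t$ in $\posreals$ and take $\Delta$ in $\gr{0, t}$, then use $\tm_t = \tm_\Delta \tm_{t-\Delta}$ to write
\[
	\frac{\tm_t - \tm_{t-\Delta}}{\Delta}
	= \frac{\tm_\Delta - I}{\Delta}\,\tm_{t-\Delta}.
\]
Here the first factor tends to $\trm$ by (ii) and the second tends to $\tm_t$ by (iii); since matrix multiplication is continuous, the product converges to $\trm \tm_t$, and taking the $\gr{x, y}$ entry gives the second stated limit.

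The only genuine subtlety---and thus the main obstacle---is the left-derivative case, where both factors in the product depend on $\Delta$: I must not merely differentiate at the origin but also exploit the continuity of $t \mapsto \tm_t$ so that the product of the two separate limits equals the limit of the products. Everything else is routine bookkeeping once the series-based facts (i)--(iii) are recorded.
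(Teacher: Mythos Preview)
Your proof is correct. The paper does not supply its own proof of this lemma: it is stated with the bracketed attribution ``Theorem~2.1.1 in \cite{1997Norris}'' and used as a black-box result, so there is nothing to compare against beyond noting that your argument---semigroup factorisation combined with the series-based derivative at the origin and continuity of $t \mapsto \tm_t$---is exactly the standard route one finds in Norris and elsewhere.
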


\begin{lemma}
\label{lem:dist_u luxu t is continuouos in t}
	If $\probm$ is a positive and irreducible CTMC and $\lumap \colon \stsp \to \lustsp$ a lumping map, then, for all $t$ in $\posreals$, $u$ in $\setoftseql{t}$, $\lu{x}$ in $\lustsp$ and $x$ in $\stsp$,
	\[
		\lim_{\Delta \to 0^+} \dist_{\gr{u, \lu{x}_u, t - \Delta}}\gr{x}
		= \dist_{\gr{u, \lu{x}_u, t}}\gr{x}.
	\]
\end{lemma}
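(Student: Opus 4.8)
The plan is to reduce the statement to the continuity (indeed real-analyticity) of the matrix exponential $s \mapsto \tm_s$. First, I would observe that in the definition~\eqref{eqn:dist u x_u t} of $\dist_{\gr{u, \lu{x}_u, t}}\gr{x}$ the denominator $\sum_{z_u \in \invlumap\gr{\lu{x}_u}} \probm\gr{X_u = z_u}$ does not depend on $t$ and, by Lemma~\ref{lem:Irreducible CTMC:Any path has pos prob}, is strictly positive; hence it suffices to show that the numerator $N\gr{t} \coloneqq \sum_{x_u \in \invlumap\gr{\lu{x}_u}} \probm\gr{X_u = x_u, X_t = x}$ is continuous at $t$. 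I would also note at the outset that, because $u \in \setoftseql{t}$ forces $\max u < t$, for all sufficiently small $\Delta$ in $\posreals$ we still have $\max u < t - \Delta$, so that $u \in \setoftseql{t - \Delta}$ and the left-hand side of the claimed identity is genuinely well-defined.

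Second, I would expand each term $\probm\gr{X_u = x_u, X_t = x}$ exactly as in the proof of Lemma~\ref{lem:Irreducible CTMC:Any path has pos prob}: writing $u = t_1, \dots, t_n$ with $t_1 < \cdots < t_n < t$ and applying~\eqref{eqn:Probability of state assignment as sum of products} to the augmented time sequence $t_1, \dots, t_n, t$ (the case $u = \emptytseq$ being handled separately through $\probm\gr{X_t = x} = \sum_{x_0 \in \stsp} \inidist\gr{x_0} \tm_t\gr{x_0, x}$) gives
\[
	\probm\gr{X_u = x_u, X_t = x}
	= \sum_{x_0 \in \stsp} \inidist\gr{x_0}\, \tm_{t_1}\gr{x_0, x_1} \gr*{ \prod_{i=2}^{n} \tm_{t_i - t_{i-1}}\gr{x_{i-1}, x_i} } \tm_{t - t_n}\gr{x_n, x}.
\]
The crucial point is that the only factor on the right-hand side depending on $t$ is $\tm_{t - t_n}\gr{x_n, x}$ (respectively $\tm_t\gr{x_0, x}$ when $u = \emptytseq$).

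Third, I would invoke the continuity of $s \mapsto \tm_s$ on $\nnegreals$, which is immediate from~\eqref{eqn:Matrix exponential}. Consequently $t \mapsto \tm_{t - t_n}\gr{x_n, x}$ is continuous on $[t_n, +\infty)$, and since $N\gr{\cdot}$ is a finite sum of finite products of such continuous functions it is itself continuous at $t$; dividing by the $t$-independent, strictly positive denominator preserves continuity, so $t \mapsto \dist_{\gr{u, \lu{x}_u, t}}\gr{x}$ is continuous at $t$, which is precisely the claimed limit.

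I do not expect a genuine obstacle here. The only points requiring a little care are the bookkeeping for the empty time sequence $\emptytseq$ and the preliminary observation that $t - \Delta$ still lies strictly above $\max u$ for small $\Delta$, so that the quantity being limited is even defined; everything else is an application of results already established in the excerpt.
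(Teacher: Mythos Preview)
Your proposal is correct and follows essentially the same route as the paper: reduce to continuity of a single transition-matrix factor by expanding $\probm\gr{X_u = x_u, X_t = x}$ via the Markov property, treat the cases $u \neq \emptytseq$ and $u = \emptytseq$ separately, and note that the denominator is $t$-independent and positive. The only cosmetic difference is that the paper factors the numerator as $\probm\gr{X_u = x_u}\,\tm_{t - t_n}\gr{x_{t_n}, x}$ directly and appeals to Lemma~\ref{lem:trm as limit} (differentiability of $\tm_{\cdot}$) for continuity, whereas you expand fully via~\eqref{eqn:Probability of state assignment as sum of products} and cite continuity of the matrix exponential from~\eqref{eqn:Matrix exponential}; both are equivalent.
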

\begin{proof}
	Fix some $\Delta$ in $\posreals$ such that $\Delta\leq t$ and $\max u < t - \Delta$.
	Then by \eqref{eqn:dist u x_u t},
	\[
		\dist_{\gr{u, \lu{x}_u, t-\Delta}}\gr{x}
		= \frac{
				\sum_{x_u \in \invlumap\gr{\lu{x}_u}} \probm\gr{X_u = x_u, X_{t-\Delta} = x }
			}{
				\sum_{z_u \in \invlumap\gr{\lu{x}_u}} \probm\gr{X_u = z_u}
			}.
	\]
	For notational simplicity, we distinguish between two cases.

	First, we assume that $u = t_1, \dots, t_n$ is not the empty sequence $\emptytseq{}$.
	Then using \eqref{eqn:HCTMC Markov}, \eqref{eqn:HCTMC Homogeneity} and \eqref{eqn:HCTMC transition}, we can write the numerator as
	\begin{multline*}
		\sum_{x_u \in \invlumap\gr{\lu{x}_u}} \probm\gr{X_u = x_u, X_{t-\Delta} = x }
		= \sum_{x_u \in \invlumap\gr{\lu{x}_u}} \probm\gr{X_u = x_u} \probm\gr{X_{t-\Delta} = x \cond X_u = x_u } \\
		= \sum_{x_u \in \invlumap\gr{\lu{x}_u}} \probm\gr{X_u = x_u} \probm\gr{X_{t-\Delta} = x \cond X_{t_n} = x_{t_n} } \\
		= \sum_{x_u \in \invlumap\gr{\lu{x}_u}} \probm\gr{X_u = x_u} \probm\gr{X_{t-\Delta-t_n} = x \cond X_{0} = x_{t_n} } \\
		= \sum_{x_u \in \invlumap\gr{\lu{x}_u}} \probm\gr{X_u = x_u} \tm_{t-\Delta-t_n}\gr{x_{t_n}, x}.
	\end{multline*}
	Since it follows from Lemma~\ref{lem:trm as limit} that $\lim_{\Delta \to 0^+} \tm_{t-\Delta-t_n}\gr{y, x} = \tm_{t-t_n}\gr{y, x}$ for all $y$ in $\stsp$, we find that
	\begin{align*}
		\lim_{\Delta \to 0^+} \dist_{\gr{u, \lu{x}_u, t-\Delta}}\gr{x}
		&= \frac{
			\sum_{x_u \in \invlumap\gr{\lu{x}_u}} \lim_{\Delta \to 0^+} \probm\gr{X_u = x_u} \tm_{t-\Delta-t_n}\gr{x_{t_n}, x}
		}{
			\sum_{z_u \in \invlumap\gr{\lu{x}_u}} \probm\gr{X_u = z_u}
		} \\
		&= \frac{
			\sum_{x_u \in \invlumap\gr{\lu{x}_u}} \probm\gr{X_u = x_u} \tm_{t-t_n}\gr{x_{t_n}, x}
		}{
			\sum_{z_u \in \invlumap\gr{\lu{x}_u}} \probm\gr{X_u = z_u}
		}.
	\end{align*}
	Executing the same manipulations as before in reverse order yields
	\[
		\lim_{\Delta \to 0^+} \dist_{\gr{u, \lu{x}_u, t-\Delta}}\gr{x}
		= \dist_{\gr{u, \lu{x}_u, t}}\gr{x}.
	\]

	Next, we assume that $u$ is the empty sequence $\emptytseq{}$.
	Then some straightforward manipulations yield
	\begin{align*}
		\dist_{\gr{u, \lu{x}_u, t-\Delta}}\gr{x}
		&= \frac{\probm\gr{X_{\emptytseq{}} = x_{\emptytseq{}}, X_{t-\Delta} = x}}{\probm\gr{X_{\emptytseq{}} = x_{\emptytseq{}}}}
		= \probm\gr{X_{t-\Delta} = x} \\
		&= \sum_{y \in \stsp} \probm\gr{X_0 = y} \probm\gr{X_{t-\Delta} = x \cond X_0 = y}
		= \sum_{y \in \stsp} \inidist\gr{y} \tm_{t-\Delta}\gr{y, x}.
	\end{align*}
	Again, it now follows from Lemma~\ref{lem:trm as limit} that
	\[
		\lim_{\Delta \to 0^+} \dist_{\gr{\emptytseq, \lu{x}_{u}, t-\Delta}}\gr{x}
		= \sum_{y \in \stsp} \inidist\gr{y} \lim_{\Delta \to 0^+} \tm_{t-\Delta}\gr{y, x}
		= \sum_{y \in \stsp} \inidist\gr{y} \tm_{t}\gr{y, x},
	\]
	such that
	\[
		\lim_{\Delta \to 0^+} \dist_{\gr{u, \lu{x}_u, t-\Delta}}\gr{x}
		= \dist_{\gr{u, \lu{x}_u, t}}\gr{x}
	\]
	\qed
\end{proof}

\begin{lemma}
\label{lem:Transition probability of the lumped stochastic process}
	Let $\markov$ be a positive and irreducible CTMC, $\lumap \colon \stsp \to \lustsp$ a lumping map and $\problum$ the corresponding lumped stochastic process.
	Fix any $t, \Delta$ in $\nnegreals{}$, $u$ in $\setoftseql{t}$, $\lu{x}, \lu{y}$ in $\lustsp$ and $\lu{x}_{u}$ in $\lustsp_{u}$.
	Then
	\begin{multline}
	\label{eqn:multiline thing}
		\problum\gr{ \lu{X}_{t+\Delta} = \lu{y} \cond \lu{X}_{u} = \lu{x}_{u}, \lu{X}_{t} = \lu{x} } \\
		= \sum_{x \in \invlumap\gr{\lu{x}}}
		\frac{
			\dist_{\gr{u, \lu{x}_{u}, t}} \gr{x}
		}{
			\sum_{z \in \invlumap\gr{\lu{x}}} \dist_{\gr{u, \lu{x}_{u}, t}} \gr{z}
		} \sum_{y \in \invlumap\gr{\lu{y}}} \tm_{\Delta}\gr{x, y},
	\end{multline}
	which is a convex combination of terms $\sum_{y \in \invlumap\gr{\lu{y}}} \tm_{\Delta}\gr{x, y}$ with $x$ in $\invlumap\gr{\lu{x}}$.
	If moreover $\Delta \leq t$ and $\max u < t - \Delta$, then
	\begin{multline}
	\label{eqn:multiline thing:from the left}
		\problum\gr{ \lu{X}_{t} = \lu{y} \cond \lu{X}_{u} = \lu{x}_{u}, \lu{X}_{t-\Delta} = \lu{x} } \\
		= \sum_{x \in \invlumap\gr{\lu{x}}}
		\frac{
			\dist_{\gr{u, \lu{x}_{u}, t-\Delta}} \gr{x}
		}{
			\sum_{z \in \invlumap\gr{\lu{x}}} \dist_{\gr{u, \lu{x}_{u}, t-\Delta}} \gr{z}
		} \sum_{y \in \invlumap\gr{\lu{y}}} \tm_{\Delta}\gr{x, y},
	\end{multline}
\end{lemma}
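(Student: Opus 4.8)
The plan is to obtain both equalities from Lemma~\ref{lem:Lumped process:One step conditional}, applied to the time sequence obtained by appending the relevant ``anchor'' time point---$t$ for \eqref{eqn:multiline thing} and $t-\Delta$ for \eqref{eqn:multiline thing:from the left}---to $u$, and then to simplify using the Markov property \eqref{eqn:HCTMC Markov}, homogeneity \eqref{eqn:HCTMC Homogeneity}, the identity \eqref{eqn:HCTMC transition} for transition probabilities, and the definition \eqref{eqn:dist u x_u t} of $\dist_{\gr{u,\lu{x}_u,\cdot}}$. Throughout, the degenerate case $\Delta = 0$ is immediate from the laws of probability satisfied by $\problum$ (Lemma~\ref{lem:Coherent conditional probability satisfies the laws of probability}), so I assume $\Delta > 0$.

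For \eqref{eqn:multiline thing}, I would set $w \coloneqq u \cup \set{t}$ and take $\lu{x}_w$ in $\lustsp_w$ to be the tuple that restricts to $\lu{x}_u$ on $u$ and equals $\lu{x}$ at $t$; since $\max u < t$ this $w$ is a legitimate increasing time sequence with $\max w = t < t + \Delta$, and the event $\gr{\lu{X}_w = \lu{x}_w}$ is exactly $\gr{\lu{X}_u = \lu{x}_u, \lu{X}_t = \lu{x}}$. Applying Lemma~\ref{lem:Lumped process:One step conditional} with $w$ in the role of $u$ and $t+\Delta$ in the role of $t$ then expresses $\problum\gr{\lu{X}_{t+\Delta} = \lu{y} \cond \lu{X}_w = \lu{x}_w}$ as a ratio of sums over $\invlumap\gr{\lu{x}_w}$. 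Three routine simplifications finish the computation: splitting each tuple $x_w$ in $\invlumap\gr{\lu{x}_w}$ into its value $x$ at $t$ and its restriction $x_u$ to $u$ factors $\invlumap\gr{\lu{x}_w}$ into $\invlumap\gr{\lu{x}_u}$ paired with $\invlumap\gr{\lu{x}}$; the Markov property together with \eqref{eqn:HCTMC Homogeneity} and \eqref{eqn:HCTMC transition} turns $\probm\gr{X_{t+\Delta} = y \cond X_w = x_w}$ into $\tm_\Delta\gr{x, y}$; and $\gr{X_w = x_w}$ is just the joint event $\gr{X_u = x_u, X_t = x}$. Pulling the common factor $\sum_{z_u \in \invlumap\gr{\lu{x}_u}} \probm\gr{X_u = z_u}$ out of both numerator and denominator, the inner sums over $x_u$ collapse to $\dist_{\gr{u,\lu{x}_u,t}}\gr{x}$ by \eqref{eqn:dist u x_u t}, the common factor cancels, and exactly the right-hand side of \eqref{eqn:multiline thing} remains. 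That it is a convex combination then follows from Lemma~\ref{lem:The fraction is a positive distribution}, which ensures $\dist_{\gr{u,\lu{x}_u,t}}$ is a positive distribution on $\stsp$, so the coefficients $\dist_{\gr{u,\lu{x}_u,t}}\gr{x} / \sum_{z \in \invlumap\gr{\lu{x}}} \dist_{\gr{u,\lu{x}_u,t}}\gr{z}$ are nonnegative and sum to one over $x$ in $\invlumap\gr{\lu{x}}$.

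The argument for \eqref{eqn:multiline thing:from the left} is identical, with $w \coloneqq u \cup \set{t-\Delta}$ in place of $u \cup \set{t}$; here the hypotheses $\Delta \leq t$ and $\max u < t - \Delta$ are precisely what guarantees that $w$ is a legitimate increasing sequence of nonnegative reals with $\max w = t - \Delta < t$, so that Lemma~\ref{lem:Lumped process:One step conditional} applies and, simultaneously, $u$ lies in $\setoftseql{t-\Delta}$ so that $\dist_{\gr{u,\lu{x}_u,t-\Delta}}$ is well defined (and positive) by Lemma~\ref{lem:The fraction is a positive distribution}. The Markov property replaces $\probm\gr{X_t = y \cond X_w = x_w}$ by $\tm_\Delta\gr{x, y}$, where $x$ is the value of $x_w$ at $t-\Delta$, and the same factoring and cancellation yield \eqref{eqn:multiline thing:from the left}. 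There is no analytic obstacle, since the statement is purely algebraic; the only point demanding care is the bookkeeping---correctly identifying $\invlumap\gr{\lu{x}_w}$ with the Cartesian product of $\invlumap\gr{\lu{x}_u}$ and $\invlumap\gr{\lu{x}}$, keeping the sums over $\invlumap\gr{\lu{y}}$, $\invlumap\gr{\lu{x}}$ and $\invlumap\gr{\lu{x}_u}$ disentangled, and verifying that the factor $\sum_{z_u \in \invlumap\gr{\lu{x}_u}} \probm\gr{X_u = z_u}$, strictly positive by Lemma~\ref{lem:Irreducible CTMC:Any path has pos prob}, really occurs in both numerator and denominator before it is cancelled.
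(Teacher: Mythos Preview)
Your proposal is correct and follows essentially the same route as the paper: invoke Lemma~\ref{lem:Lumped process:One step conditional} with the conditioning sequence $u$ enlarged by the anchor time point, simplify the numerator via \eqref{eqn:HCTMC Markov}--\eqref{eqn:HCTMC transition}, and then divide numerator and denominator by $\sum_{z_u \in \invlumap(\lu{x}_u)} \probm(X_u = z_u)$ to recover the $\dist_{(u,\lu{x}_u,\cdot)}$ expression, with the convex-combination claim following from Lemma~\ref{lem:The fraction is a positive distribution}. You are in fact slightly more careful than the paper in isolating the degenerate case $\Delta = 0$, where the appended time point coincides with the target time and Lemma~\ref{lem:Lumped process:One step conditional} does not literally apply.
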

\begin{proof}
	We only prove the first equality because the proof of the second equality is largely analoguous.
	To that end, we recall that by \eqref{eqn:Lumped process:One step conditional},
	\begin{multline*}
		\problum\gr{ \lu{X}_{t+\Delta} = \lu{y} \cond \lu{X}_{u} = \lu{x}_{u}, \lu{X}_{t} = \lu{x} } \\
		= \frac{
			\sum\limits_{x_{u} \in \invlumap\gr{\lu{x}_{u}}}
			\sum\limits_{x \in \invlumap\gr{\lu{x}}}
			\sum\limits_{y \in \invlumap\gr{\lu{y}}}
			\probm\gr{X_{u} = x_{u}, X_{t} = x, X_{t+\Delta} = y}
		}{
			\sum\limits_{z_{u} \in \invlumap\gr{\lu{x}_{u}}}
			\sum\limits_{z \in \invlumap\gr{\lu{x}}}
			\probm\gr{ X_{u} = z_{u}, X_{t} = z }
		}.
	\end{multline*}
	After applying \eqref{eqn:HCTMC Markov}--\eqref{eqn:HCTMC transition} and reordering the sums, we end up with
	\begin{multline*}
		\problum\gr{ \lu{X}_{t+\Delta} = \lu{y} \cond \lu{X}_{u} = \lu{x}_{u}, \lu{X}_{t} = \lu{x} } \\
		= \frac{
			\sum\limits_{x \in \invlumap\gr{\lu{x}}}
			\sum\limits_{y \in \invlumap\gr{\lu{y}}}
			\tm_{\Delta}\gr{x, y}
			\sum\limits_{x_{u} \in \invlumap\gr{\lu{x}_{u}}}
			\probm\gr{ X_{u} = x_{u}, X_{t} = x }
		}{
			\sum\limits_{z_{u} \in \invlumap\gr{\lu{x}_{u}}}
			\sum\limits_{z \in \invlumap\gr{\lu{x}}}
			\probm\gr{ X_{u} = z_u, X_{t} = z }
		}.
	\end{multline*}
	It is a matter of straightforward verification that if we divide both the numerator and the denominator in this final expression by $\sum_{x_u \in \invlumap\gr{\lu{x}_u}} \probm\gr{X_u = x_u}$, then the obtained expression is indeed equal to \eqref{eqn:multiline thing}.
	Finally, verifying that \eqref{eqn:multiline thing} is a convex combination is trivial because $\dist_{\gr{u, \lu{x}_u, t}}$ is a positive distribution on $\stsp$ by Lemma~\ref{lem:The fraction is a positive distribution}.
	\qed
\end{proof}

\begin{proofof}{Proposition~\ref{prop:Instantaneous transition rate of the lumped stochastic process}}
	We start by proving \eqref{eqn:multiline thing}, i.e., the limit from the right.
	To that end, we fix any $\Delta$ in $\posreals$ and recall that by Lemma~\ref{lem:Transition probability of the lumped stochastic process},
	\[
		\problum\gr{ \lu{X}_{t+\Delta} = \lu{y} \cond \lu{X}_{u} = \lu{x}_{u}, \lu{X}_{t} = \lu{x} } \\
		= \sum_{x \in \invlumap\gr{\lu{x}}}
		\frac{
			\dist_{\gr{u, \lu{x}_{u}, t}} \gr{x}
		}{
			\sum_{z \in \invlumap\gr{\lu{x}}} \dist_{\gr{u, \lu{x}_{u}, t}} \gr{z}
		} \sum_{y \in \invlumap\gr{\lu{y}}} \tm_{\Delta}\gr{x, y},
	\]
	where $\dist_{\gr{u, \lu{x}_{u}, t}}$ is a positive distribution on $\stsp$ by Lemma~\ref{lem:The fraction is a positive distribution}.
	Subtracting $\indica{\lu{x}}\gr{\lu{y}}$ from both sides of the equality and dividing both sides of the equality by $\Delta$ yields
	\begin{multline*}
		\frac{
			\problum\gr{ \lu{X}_{t+\Delta} = \lu{y} \cond \lu{X}_{u} = \lu{x}_{u}, \lu{X}_{t} = \lu{x} } - \indica{\lu{x}}\gr{\lu{y}}
		}{
			\Delta
		} \\
		= \frac{1}{\Delta} \gr*{ \gr*{\sum_{x \in \invlumap\gr{\lu{x}}}
		\frac{
			\dist_{\gr{u, \lu{x}_{u}, t}} \gr{x}
		}{
			\sum_{z \in \invlumap\gr{\lu{x}}} \dist_{\gr{u, \lu{x}_{u}, t}} \gr{z}
		} \sum_{y \in \invlumap\gr{\lu{y}}} \tm_{\Delta}\gr{x, y} } - \indica{\lu{x}}\gr{\lu{y}} }.
	\end{multline*}
	Recall that the sum for $x$ ranging over $\invlumap\gr{\lu{x}}$ is a convex combination of the terms $\sum_{y \in \invlumap\gr{\lu{y}}} \tm_{\Delta}\gr{x, y}$, such that we can rewrite this equality as
	\begin{multline*}
		\frac{
			\problum\gr{ \lu{X}_{t+\Delta} = \lu{y} \cond \lu{X}_{u} = \lu{x}_{u}, \lu{X}_{t} = \lu{x} } - \indica{\lu{x}}\gr{\lu{y}}
		}{
			\Delta
		} \\
		= \frac{1}{\Delta} \gr*{ \sum_{x \in \invlumap\gr{\lu{x}}}
		\frac{
			\dist_{\gr{u, \lu{x}_{u}, t}} \gr{x}
		}{
			\sum_{z \in \invlumap\gr{\lu{x}}} \dist_{\gr{u, \lu{x}_{u}, t}} \gr{z}
		} \gr*{ \gr*{ \sum_{y \in \invlumap\gr{\lu{y}}} \tm_{\Delta}\gr{x, y} } - \indica{\lu{x}}\gr{\lu{y}} } }.
	\end{multline*}
	Furthermore, it clearly holds that $\indica{\lu{x}}\gr{\lu{y}} = \sum_{y \in \invlumap\gr{\lu{y}}} \indica{x}\gr{y}$, such that
	\begin{multline*}
		\frac{
			\problum\gr{ \lu{X}_{t+\Delta} = \lu{y} \cond \lu{X}_{u} = \lu{x}_{u}, \lu{X}_{t} = \lu{x} } - \indica{\lu{x}}\gr{\lu{y}}
		}{
			\Delta
		} \\
		= \frac{1}{\Delta} \gr*{ \sum_{x \in \invlumap\gr{\lu{x}}}
		\frac{
			\dist_{\gr{u, \lu{x}_{u}, t}} \gr{x}
		}{
			\sum_{z \in \invlumap\gr{\lu{x}}} \dist_{\gr{u, \lu{x}_{u}, t}} \gr{z}
		} \sum_{y \in \invlumap\gr{\lu{y}}} \gr*{\tm_{\Delta}\gr{x, y}  - \indica{x}\gr{y} } }.
	\end{multline*}
	Since $T_0\gr{x, y} = I\gr{x, y} = \indica{x}\gr{y}$, it follows from Lemma~\ref{lem:trm as limit} that
	\begin{multline*}
		\lim_{\Delta \to 0^+} \frac{
			\problum\gr{ \lu{X}_{t+\Delta} = \lu{y} \cond \lu{X}_{u} = \lu{x}_{u}, \lu{X}_{t} = \lu{x} } - \indica{\lu{x}}\gr{\lu{y}}
		}{
			\Delta
		} \\
		= \sum_{x \in \invlumap\gr{\lu{x}}}
		\frac{
			\dist_{\gr{u, \lu{x}_{u}, t}} \gr{x}
		}{
			\sum_{z \in \invlumap\gr{\lu{x}}} \dist_{\gr{u, \lu{x}_{u}, t}} \gr{z}
		} \sum_{y \in \invlumap\gr{\lu{y}}} \lim_{\Delta \to 0^+} \frac{\tm_{\Delta}\gr{x, y}  - \indica{x}\gr{y}}{\Delta} \\
		= \sum_{x \in \invlumap\gr{\lu{x}}}
		\frac{
			\dist_{\gr{u, \lu{x}_{u}, t}} \gr{x}
		}{
			\sum_{z \in \invlumap\gr{\lu{x}}} \dist_{\gr{u, \lu{x}_{u}, t}} \gr{z}
		} \sum_{y \in \invlumap\gr{\lu{y}}} \trm\gr{x, y}.
	\end{multline*}

	Next, we prove \eqref{eqn:inst trm:from the left}, i.e., the limit from the left.
	To that end, we use Lemma~\ref{lem:Transition probability of the lumped stochastic process} and execute similar manipulations as in the first part of the proof, to yield
	\begin{multline*}
		\lim_{\Delta \to 0^+} \frac{
			\problum\gr{ \lu{X}_{t} = \lu{y} \cond \lu{X}_{u} = \lu{x}_{u}, \lu{X}_{t-\Delta} = \lu{x} } - \indica{\lu{x}}\gr{\lu{y}}
		}{
			\Delta
		} \\
		= \lim_{\Delta \to 0^+} \frac{1}{\Delta} \gr*{ \sum_{x \in \invlumap\gr{\lu{x}}}
		\frac{
			\dist_{\gr{u, \lu{x}_{u}, t-\Delta}} \gr{x}
		}{
			\sum_{z \in \invlumap\gr{\lu{x}}} \dist_{\gr{u, \lu{x}_{u}, t-\Delta}} \gr{z}
		} \sum_{y \in \invlumap\gr{\lu{y}}} \gr*{\tm_{\Delta}\gr{x, y}  - \indica{x}\gr{y} } }.
	\end{multline*}
	It now follows from Lemmas~\ref{lem:trm as limit} and \ref{lem:dist_u luxu t is continuouos in t} that
	\begin{multline*}
		\lim_{\Delta \to 0^+} \frac{
			\problum\gr{ \lu{X}_{t} = \lu{y} \cond \lu{X}_{u} = \lu{x}_{u}, \lu{X}_{t-\Delta} = \lu{x} } - \indica{\lu{x}}\gr{\lu{y}}
		}{
			\Delta
		} \\
		= \sum_{x \in \invlumap\gr{\lu{x}}}
		\frac{
			\lim_{\Delta \to 0^+} \dist_{\gr{u, \lu{x}_{u}, t-\Delta}} \gr{x}
		}{
			\sum_{z \in \invlumap\gr{\lu{x}}} \lim_{\Delta \to 0^+} \dist_{\gr{u, \lu{x}_{u}, t-\Delta}} \gr{z}
		} \sum_{y \in \invlumap\gr{\lu{y}}} \lim_{\Delta \to 0^+} \frac{\tm_{\Delta}\gr{x, y}  - \indica{x}\gr{y} }{\Delta} \\
		= \sum_{x \in \invlumap\gr{\lu{x}}}
		\frac{
			\dist_{\gr{u, \lu{x}_{u}, t}} \gr{x}
		}{
			\sum_{z \in \invlumap\gr{\lu{x}}} \dist_{\gr{u, \lu{x}_{u}, t}} \gr{z}
		} \sum_{y \in \invlumap\gr{\lu{y}}} \trm\gr{x, y}.
	\end{multline*}
	\qed
\end{proofof}

The following corollary essentially allows us to use the results from \cite{2017KrakDeBock}.
\begin{corollary}
\label{cor:Lumped process is well-behaved}
	If $\prob$ is a positive and irreducible CTMC and $\lumap \colon \stsp \to \lustsp$ a lumping map, then the corresponding lumped  process $\problum$ is \emph{well-behaved} \cite[Definition~4.4]{2017KrakDeBock}, in the sense that, for all $t$ in $\nnegreals$, $u$ in $\setoftseql{t}$, $x, y$ in $\lustsp$ and $\lu{x}_u$ in $\lustsp_u$,
	\[
		\limsup_{\Delta \to 0^+} \frac{1}{\Delta} \abs*{
			\problum\gr{ \lu{X}_{t+\Delta} = \lu{y} \cond \lu{X}_u = \lu{x}, \lu{X}_t = \lu{x}} - \indica{\lu{x}}\gr{\lu{y}}
		} < + \infty
	\]
	and, if $t \neq 0$,
	\[
		\limsup_{\Delta \to 0^+} \frac{1}{\Delta} \abs*{
			\problum\gr{ \lu{X}_{t} = \lu{y} \cond \lu{X}_u = \lu{x}, \lu{X}_{t - \Delta} = \lu{x} } - \indica{\lu{x}}\gr{\lu{y}}
		} < + \infty.
	\]
\end{corollary}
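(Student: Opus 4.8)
The plan is to derive this as an essentially immediate consequence of Proposition~\ref{prop:Instantaneous transition rate of the lumped stochastic process}, using only the elementary fact that a real-valued function with a finite limit is bounded near the limit point.

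First I would recall that Definition~4.4 of \cite{2017KrakDeBock} asks exactly for the two displayed limit superiors to be finite, so it suffices to bound each of them separately. For the first, I would apply the ``limit from the right'' part of Proposition~\ref{prop:Instantaneous transition rate of the lumped stochastic process}, with $\lu{y}$, $\lu{x}$ and $\lu{x}_u$ as in the corollary: it yields that the signed difference quotient
\[
	\frac{1}{\Delta}\gr*{\problum\gr{\lu{X}_{t+\Delta} = \lu{y} \cond \lu{X}_u = \lu{x}_u, \lu{X}_t = \lu{x}} - \indica{\lu{x}}\gr{\lu{y}}}
\]
converges, as $\Delta \to 0^+$, to a convex combination of the terms $\sum_{y \in \invlumap\gr{\lu{y}}} \trm\gr{x, y}$ with $x$ ranging over $\invlumap\gr{\lu{x}}$.

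Next I would observe that this limit is a finite real number. Each $\sum_{y \in \invlumap\gr{\lu{y}}} \trm\gr{x, y}$ is a finite sum of real entries of $\trm$ because $\stsp$ is finite, and the convex weights are well-defined and sum to one because $\invlumap\gr{\lu{x}}$ is non-empty (as $\lumap$ is surjective) and $\dist_{\gr{u, \lu{x}_u, t}}$ is a positive distribution on $\stsp$ by Lemma~\ref{lem:The fraction is a positive distribution}, so the normalising denominator $\sum_{z \in \invlumap\gr{\lu{x}}} \dist_{\gr{u, \lu{x}_u, t}}\gr{z}$ is strictly positive. Writing $L$ for this limit, we thus have $L \in \reals$, and since $g(\Delta) \to L$ forces $\abs{g(\Delta)} \to \abs{L}$, it follows that the first limit superior equals $\abs{L} < +\infty$. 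The second inequality, valid when $t \neq 0$, follows in exactly the same way from the ``limit from the left'' part of Proposition~\ref{prop:Instantaneous transition rate of the lumped stochastic process}.

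Since essentially all the work has already been done in Proposition~\ref{prop:Instantaneous transition rate of the lumped stochastic process}, I do not expect any genuine obstacle: the corollary merely repackages the finiteness content of that proposition in the form demanded by the definition of well-behavedness, and the only step that really needs spelling out is the trivial passage from ``the signed difference quotient has a finite limit'' to ``the limit superior of its absolute value is finite''.
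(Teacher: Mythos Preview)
Your proposal is correct and follows the same route as the paper, which simply states that the corollary ``follows immediately from Proposition~\ref{prop:Instantaneous transition rate of the lumped stochastic process}.'' You have merely spelled out the one-line deduction---finite limit of the signed quotient implies finite $\limsup$ of its absolute value---in more detail than the paper does.
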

\begin{proof}
	Follows immediately from Proposition~\ref{prop:Instantaneous transition rate of the lumped stochastic process}.
	\qed
\end{proof}

	\section{The Induced Imprecise Continuous-Time Markov Chains}
\label{app:Induced iCTMC}
Everything is now set up to characterise the imprecise CTMC induced by lumping.
Recall from Appendix~\ref{app:iCTMCs} that such an imprecise CTMC is fully characterised by a non-empty bounded set of transition rate matrices and a non-empty set of initial distributions.
Therefore, we first focus on the set of \emph{lumped transition rate matrices}.

\subsection{The Set of Lumped Transition Rate Matrices}
\label{sapp:Induced iCTMC:Set of lumped trms}
Let $\trm$ be an irreducible transition rate matrix and $\lumap \colon \stsp \to \lustsp$ a lumping map.
Then for any $\dist$ in $\setofposdist\gr{\stsp}$, the matrix~$\lutrm_{\dist} \colon \setoffn\gr{\lustsp} \to \setoffn\gr{\lustsp}$ is defined by
\begin{equation}
\label{eqn:lutrm_dist defintion}
	\lutrm_{\dist}\gr{\lu{x}, \lu{y}}
	\coloneqq \sum_{x \in \invlumap\gr{\lu{x}}} \frac{\dist\gr{x}}{\sum_{z \in \invlumap\gr{\lu{x}}} \dist\gr{z}} \sum_{y \in \invlumap\gr{\lu{y}}} \trm\gr{x, y}
	\text{ for all $\lu{x}, \lu{y}$ in $\lustsp$}.
\end{equation}
\begin{lemma}
\label{lem:lutrm_posdist is a trm and irreducible}
	If $\trm$ is an irreducible transition rate matrix on $\stsp$, $\lumap \colon \stsp \to \lustsp$ a lumping map and $\dist$ an element of $\setofposdist\gr{\stsp}$, then $\lutrm_{\dist}$, defined by \eqref{eqn:lutrm_dist defintion}, is an irreducible transition rate matrix.
\end{lemma}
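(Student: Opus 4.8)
The plan is to verify, in order, the three defining properties: non-negativity of the off-diagonal entries of $\lutrm_{\dist}$, rows summing to zero, and irreducibility. The first two are routine and establish that $\lutrm_{\dist}$ is a transition rate matrix; the last requires a small path-tracking argument and invokes Proposition~\ref{prop:Precise irreducibility}.

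For the off-diagonal entries, I would fix $\lu{x} \neq \lu{y}$ in $\lustsp$ and note that $\invlumap\gr{\lu{x}}$ and $\invlumap\gr{\lu{y}}$ are disjoint, so every $\trm\gr{x, y}$ occurring in \eqref{eqn:lutrm_dist defintion} is an off-diagonal entry of $\trm$ and hence non-negative; since $\dist$ is positive, the coefficients $\dist\gr{x}/\sum_{z \in \invlumap\gr{\lu{x}}}\dist\gr{z}$ are positive, whence $\lutrm_{\dist}\gr{\lu{x}, \lu{y}} \geq 0$. For the row sums, I would fix $\lu{x}$ in $\lustsp$, interchange the finitely many sums so that $\sum_{x \in \invlumap\gr{\lu{x}}}$ is outermost, and use that $\set*{\invlumap\gr{\lu{y}} \given \lu{y} \in \lustsp}$ partitions $\stsp$ to collapse $\sum_{\lu{y} \in \lustsp}\sum_{y \in \invlumap\gr{\lu{y}}}\trm\gr{x, y}$ into $\sum_{y \in \stsp}\trm\gr{x, y} = 0$; this gives $\sum_{\lu{y} \in \lustsp}\lutrm_{\dist}\gr{\lu{x}, \lu{y}} = 0$.

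For irreducibility I would appeal to Proposition~\ref{prop:Precise irreducibility} and show that every $\lu{y}$ in $\lustsp$ is accessible from every $\lu{x}$ in $\lustsp$ with respect to $\lutrm_{\dist}$. The case $\lu{x} = \lu{y}$ is handled by the empty path, so fix $\lu{x} \neq \lu{y}$ and pick arbitrary representatives $x \in \invlumap\gr{\lu{x}}$ and $y \in \invlumap\gr{\lu{y}}$. Since $\trm$ is irreducible, Proposition~\ref{prop:Precise irreducibility} yields a sequence $x = x_0, \dots, x_n = y$ in $\stsp$ with $\trm\gr{x_{i-1}, x_i} > 0$ for all $i$. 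Applying $\lumap$ and deleting consecutive repetitions produces a sequence $\lu{x} = \lu{z}_0, \dots, \lu{z}_m = \lu{y}$ in $\lustsp$ with $\lu{z}_{j-1} \neq \lu{z}_j$ for all $j$, and for each surviving step $\lu{z}_{j-1} \to \lu{z}_j$ there is an index $i$ with $\lumap\gr{x_{i-1}} = \lu{z}_{j-1}$ and $\lumap\gr{x_i} = \lu{z}_j$. For that $i$, the term indexed by $x_{i-1}$ and $x_i$ in \eqref{eqn:lutrm_dist defintion} equals $\dist\gr{x_{i-1}}/\gr{\sum_{z \in \invlumap\gr{\lu{z}_{j-1}}}\dist\gr{z}} \cdot \trm\gr{x_{i-1}, x_i}$, which is strictly positive, while all other terms in that double sum are non-negative (products of positive coefficients with off-diagonal entries of $\trm$, since $\lu{z}_{j-1} \neq \lu{z}_j$); hence $\lutrm_{\dist}\gr{\lu{z}_{j-1}, \lu{z}_j} > 0$. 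Thus $\lu{z}_0, \dots, \lu{z}_m$ witnesses that $\lu{y}$ is accessible from $\lu{x}$, and Proposition~\ref{prop:Precise irreducibility} then gives irreducibility of $\lutrm_{\dist}$.

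The off-diagonal and row-sum computations are entirely routine. The only step that needs a little care — and the closest thing to an obstacle — is the path-collapsing argument: one must argue that after removing consecutive duplicates from $\lumap\gr{x_0}, \dots, \lumap\gr{x_n}$, each surviving transition $\lu{z}_{j-1} \to \lu{z}_j$ is realised by an original transition $x_{i-1} \to x_i$ of positive rate with $x_{i-1}$ and $x_i$ lying in the blocks $\invlumap\gr{\lu{z}_{j-1}}$ and $\invlumap\gr{\lu{z}_j}$ respectively, so that the matching summand of \eqref{eqn:lutrm_dist defintion} is strictly positive while all remaining summands stay non-negative.
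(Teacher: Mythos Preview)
Your proposal is correct and follows essentially the same approach as the paper's proof: you verify the transition-rate-matrix axioms by the same routine computations, and for irreducibility you pull back to a $\trm$-path between representatives, apply $\lumap$, delete consecutive duplicates, and observe that each surviving step contributes a strictly positive summand in \eqref{eqn:lutrm_dist defintion}. The paper's argument is identical in structure, differing only in notation for the subsequence obtained after collapsing repeats.
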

\begin{proof}
	We first verify that $\lutrm_{\dist}$ is indeed a transition rate matrix on $\lustsp$.
	To that end, we observe that $\lutrm_{\dist}$ is a real-valued $\card{\lustsp} \times \card{\lustsp}$ matrix.
	We also need to verify that $\lutrm_{\dist}$ has non-negative off-diagonal elements and rows that sum up to zero.
	Note that for any $\lu{x}, \lu{y}$ in $\lustsp$ such that $\lu{x} \neq \lu{y}$, $\lutrm_{\dist}\gr{\lu{x}, \lu{y}}$ is a convex combination of non-negative real numbers, and hence the off-diagonal elements are non-negative.
	Also note that for any $\lu{x}$ in $\lustsp$,
	\begin{align*}
		\sum_{\lu{y} \in \lustsp} \lutrm_{\dist}\gr{\lu{x}, \lu{y}}
		&= \sum_{\lu{y} \in \lustsp} \sum_{x \in \invlumap\gr{\lu{x}}} \frac{\dist\gr{x}}{\sum_{z \in \invlumap\gr{\lu{x}}} \dist\gr{z}} \sum_{y \in \invlumap\gr{\lu{y}}} \trm\gr{x, y} \\
		&= \sum_{x \in \invlumap\gr{\lu{x}}} \frac{\dist\gr{x}}{\sum_{z \in \invlumap\gr{\lu{x}}} \dist\gr{z}} \sum_{\lu{y} \in \lustsp} \sum_{y \in \invlumap\gr{\lu{y}}} \trm\gr{x, y} \\
		&= \sum_{x \in \invlumap\gr{\lu{x}}} \frac{\dist\gr{x}}{\sum_{z \in \invlumap\gr{\lu{x}}} \dist\gr{z}} \sum_{y \in \stsp} \trm\gr{x, y}
		= 0,
	\end{align*}
	where the second and third equality follow from manipulations of finite sums and the last equality holds because $\trm$ is a transition rate matrix.

	Next, we prove that $\lutrm_{\dist}$ is irreducible.
	To that end, we fix any two $\lu{x}, \lu{y}$ in $\lustsp$ such that $\lu{x} \neq \lu{y}$.
	Fix now any $x$ in $\invlumap{\gr{\lu{x}}}$ and any $y$ in $\invlumap\gr{\lu{y}}$.
	Then as $\trm$ is irreducible, it follows from Proposition~\ref{prop:Precise irreducibility} that there is a sequence $x_{0}, \dots, x_{n}$ in $\stsp$ such that $x_{0} = x$, $x_{n} = y$ and $\trm\gr{x_{i-1}, x_{i}} > 0$ for all $i = 1, \dots, n$.
	If for all $i$ in $\{ 0, \dots, n \}$ we let $\lu{x}_{i} \coloneqq \lumap\gr{x_{i}}$, then $\lu{x}_{0}, \dots, \lu{x}_{n}$ is obviously a sequence in $\lustsp$ such that $\lu{x}_{0} = \lu{x}$ and $\lu{x}_{n} = \lu{y}$.
	It may occur for several indices $j$ in $\{ 0, \dots, n-1 \}$ that there are consecutive entries $\lu{x}_{j}, \lu{x}_{j+1}, \dots$ that are all equal to $\lu{x}_{j}$.
	For each of those indices $j$ we delete these consecutive entries $\lu{x}_{j+1}, \dots$ from the sequence; this way, we end up with the shorter sequence $\lu{x}_{i_0}, \dots, \lu{x}_{i_m}$ in $\smash{\lustsp}$, where $\{ i_0, \dots, i_m \}$ is an increasing subsequence of $\{ 1, \dots, n \}$.
	Note that by construction $\lu{x}_{i_0} = \lu{x}$, $\lu{x}_{i_m} = \lu{y}$ and $\lu{x}_{i_{\gr{k-1}}} \neq \lu{x}_{i_k}$ for all $k$ in $\{ 1, \dots, m \}$.
	Fix now any $k$ in $\{ 1, \dots, m \}$.
	While it does not necessarily hold that $\trm\gr{x_{i_{\gr{k-1}}}, x_{i_k}} > 0$, we have removed the consecutive entries in such a way that $\trm\gr{x_{i_k-1}, x_{i_k}} > 0$.
	Because clearly $x_{i_k - 1} \in \invlumap\gr{\lu{x}_{i_{\gr{k-1}}}}$ and $x_{i_k} \in \invlumap\gr{\lu{x}_{i_k}}$, it now follows that
	\[
		\lutrm_{\dist}\gr{\lu{x}_{i_{\gr{k-1}}}, \lu{x}_{i_k}}
		= \sum_{x \in \invlumap\gr{\lu{x}_{i_{\gr{k-1}}}}} \frac{\dist\gr{x}}{\sum_{z \in \invlumap\gr{\lu{x}_{i_{\gr{k-1}}}}} \dist\gr{z}} \sum_{y \in \invlumap\gr{\lu{x}_{i_k}}} \trm\gr{x, y}
		> 0.
	\]
	Since this is true for any $\lu{x}, \lu{y}$ in $\lustsp$ such that $\lu{x} \neq \lu{y}$, it follows from Proposition~\ref{prop:Precise irreducibility} that $\lutrm_{\dist}$ is irreducible.
	\qed
\end{proof}

Consider again an irreducible transition rate matrix $\trm$ and a lumping map $\lumap \colon \lustsp{} \to \stsp{}$.
The associated set of lumped transition rate matrices
\begin{equation}
\label{eqn:setoflutrm}
	\setoflutrm{}
	\coloneqq \set*{ \lutrm_{\dist} \colon \dist \in \setofposdist\gr{\stsp} }
	\subseteq \setofalltrm{}\gr{\lustsp}
\end{equation}
plays a vital role in obtaining our imprecise CTMC.
In the remainder of this section, we are only concerned with some of its nice technical properties.
Our proof for one of these properties requires the following lemma.
\begin{lemma}
\label{lem:dist_alpha}
	If $\dist_1$ and $\dist_2$ are two positive distributions on $\stsp$, $\alpha$ is a real number in the open unit interval $(0, 1)$ and $\lumap$ is a lumping map, then $\dist_\alpha$ in $\setoffn\gr{\stsp}$, defined for all $x$ in $\stsp$ as
	\begin{equation}
	\label{eqn:dist_alpha}
		\dist_\alpha\gr{x}
		\coloneqq \frac{
			\alpha \gr*{\sum_{z \in \invlumap\gr{\lumap\gr{x}}} \dist_2\gr{z}} \dist_1\gr{x}
			+ \gr{1-\alpha} \gr*{\sum_{z \in \invlumap\gr{\lumap\gr{x}}} \dist_1\gr{z}} \dist_2\gr{x}
		}{
			\sum_{\lu{y} \in \lustsp} \gr*{\sum_{y \in \invlumap\gr{\lu{y}}} \dist_1\gr{y}} \gr*{\sum_{y \in \invlumap\gr{\lu{y}}} \dist_2\gr{y}}
		}
	\end{equation}
	is a positive distribution on $\stsp$.
\end{lemma}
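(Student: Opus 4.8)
The plan is to verify the two defining properties of an element of $\setofposdist\gr{\stsp}$: that $\dist_\alpha\gr{x} > 0$ for every $x$ in $\stsp$, and that $\sum_{x \in \stsp} \dist_\alpha\gr{x} = 1$.

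For positivity, I would first note that the denominator in \eqref{eqn:dist_alpha} does not depend on $x$ and is strictly positive: since $\lumap$ is surjective, every set $\invlumap\gr{\lu{y}}$ with $\lu{y}$ in $\lustsp$ is non-empty, so each factor $\sum_{y \in \invlumap\gr{\lu{y}}} \dist_1\gr{y}$ and $\sum_{y \in \invlumap\gr{\lu{y}}} \dist_2\gr{y}$ is a sum of strictly positive numbers (as $\dist_1$ and $\dist_2$ are positive), whence each summand $\gr*{\sum_{y \in \invlumap\gr{\lu{y}}} \dist_1\gr{y}} \gr*{\sum_{y \in \invlumap\gr{\lu{y}}} \dist_2\gr{y}}$ is positive; as $\lustsp$ is non-empty, the denominator is positive. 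For the numerator at a fixed $x$, observe that $x \in \invlumap\gr{\lumap\gr{x}}$, so both inner sums $\sum_{z \in \invlumap\gr{\lumap\gr{x}}} \dist_1\gr{z}$ and $\sum_{z \in \invlumap\gr{\lumap\gr{x}}} \dist_2\gr{z}$ are strictly positive; combined with $\alpha \in (0,1)$ and $\dist_1\gr{x}, \dist_2\gr{x} > 0$, the numerator is a sum of two strictly positive terms. Hence $\dist_\alpha\gr{x} > 0$.

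For normalisation, since the denominator---call it $D$---is constant in $x$, it suffices to show that summing the numerator over $x$ in $\stsp$ yields exactly $D$. The key manoeuvre is to reorganise this sum by partitioning $\stsp$ along the lumping map: because $\lumap$ is a total, surjective function, $\stsp$ is the disjoint union of the blocks $\invlumap\gr{\lu{y}}$, $\lu{y}$ in $\lustsp$, so $\sum_{x \in \stsp} = \sum_{\lu{y} \in \lustsp} \sum_{x \in \invlumap\gr{\lu{y}}}$. For $x$ in $\invlumap\gr{\lu{y}}$ we have $\lumap\gr{x} = \lu{y}$, hence $\invlumap\gr{\lumap\gr{x}} = \invlumap\gr{\lu{y}}$, so the factors $\sum_{z \in \invlumap\gr{\lumap\gr{x}}} \dist_i\gr{z}$ are constant over the inner sum and can be pulled out of it. This turns the sum of the numerator into
\[
	\alpha \sum_{\lu{y} \in \lustsp} \gr*{\sum_{z \in \invlumap\gr{\lu{y}}} \dist_2\gr{z}} \gr*{\sum_{x \in \invlumap\gr{\lu{y}}} \dist_1\gr{x}}
	+ \gr{1-\alpha} \sum_{\lu{y} \in \lustsp} \gr*{\sum_{z \in \invlumap\gr{\lu{y}}} \dist_1\gr{z}} \gr*{\sum_{x \in \invlumap\gr{\lu{y}}} \dist_2\gr{x}},
\]
and both sums over $\lu{y}$ coincide with $D = \sum_{\lu{y} \in \lustsp} \gr*{\sum_{y \in \invlumap\gr{\lu{y}}} \dist_1\gr{y}} \gr*{\sum_{y \in \invlumap\gr{\lu{y}}} \dist_2\gr{y}}$, so the expression equals $\alpha D + \gr{1-\alpha} D = D$. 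Dividing by $D$ gives $\sum_{x \in \stsp} \dist_\alpha\gr{x} = 1$.

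There is no genuine obstacle here; the only point requiring care is the bookkeeping with the nested summations, in particular recognising that within a block $\invlumap\gr{\lu{y}}$ the set $\invlumap\gr{\lumap\gr{x}}$ does not vary with $x$, which is precisely what lets the inner factors be pulled out and the two weighted sums collapse to $D$.
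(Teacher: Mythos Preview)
Your proposal is correct and follows essentially the same approach as the paper: verify positivity of the denominator and numerator directly from the positivity of $\dist_1,\dist_2$, then compute $\sum_{x\in\stsp}\dist_\alpha(x)$ by partitioning $\stsp$ into the blocks $\invlumap(\lu{y})$, pulling the block-constant factors out of the inner sum, and observing that both resulting sums coincide with the denominator so that the convex combination collapses to $1$. The only difference is cosmetic---you are slightly more explicit about why surjectivity of $\lumap$ guarantees non-emptiness of each block, whereas the paper's proof is a bit terser on this point.
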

\begin{proof}
	To reduce the notational burden in the remainder, we define
	\[
		c
		\coloneqq \sum_{\lu{y} \in \lustsp} \gr*{\sum_{y \in \invlumap\gr{\lu{y}}} \dist_1\gr{y}} \gr*{\sum_{y \in \invlumap\gr{\lu{y}}} \dist_2\gr{y}}.
	\]
	Note that $c$ is clearly positive due to the fact that both $\dist_1$ and $\dist_2$ are positive distributions, and that therefore $\dist_\alpha\gr{x}$ is well-defined and---because a convex mixture of positive real numbers is a positive real number---positive for all $x$ in $\stsp$.
	Furthermore, we observe that
	\begin{multline*}
		\sum_{x \in \stsp} \dist_\alpha\gr{x}
		= \sum_{\lu{x} \in \lustsp} \sum_{x \in \invlumap\gr{\lu{x}}} \dist_\alpha\gr{x} \\
		= \sum_{\lu{x} \in \lustsp} \sum_{x \in \invlumap\gr{\lu{x}}} \frac{
			\alpha \gr*{\sum_{z \in \invlumap\gr{\lumap\gr{x}}} \dist_2\gr{z}} \dist_1\gr{x}
			+ \gr{1-\alpha} \gr*{\sum_{z \in \invlumap\gr{\lumap\gr{x}}} \dist_1\gr{z}} \dist_2\gr{x}
		}{c} \\
		= \sum_{\lu{x} \in \lustsp} \sum_{x \in \invlumap\gr{\lu{x}}} \frac{
			\alpha \gr*{\sum_{z \in \invlumap\gr{\lu{x}}} \dist_2\gr{z}} \dist_1\gr{x}
			+ \gr{1-\alpha} \gr*{\sum_{z \in \invlumap\gr{\lu{x}}} \dist_1\gr{z}} \dist_2\gr{x}
		}{c}.
	\end{multline*}
	Some straightforward rearranging yields
	\begin{align*}
		\sum_{x \in \stsp} \dist_\alpha\gr{x}
		&= \frac{\alpha}{c} \gr*{\sum_{\lu{x} \in \lustsp} \gr*{\sum_{z \in \invlumap\gr{\lu{x}}} \dist_2\gr{z}} \sum_{x \in \invlumap\gr{\lu{x}}} \dist_1\gr{x}} \\
		&\qquad\qquad + \frac{1 - \alpha}{c} \gr*{\sum_{\lu{x} \in \lustsp} \gr*{\sum_{z \in \invlumap\gr{\lu{x}}} \dist_1\gr{z}} \sum_{x \in \invlumap\gr{\lu{x}}} \dist_2\gr{x}} \\
		&= \frac{1}{c} \sum_{\lu{x} \in \lustsp} \gr*{\sum_{x \in \invlumap\gr{\lu{x}}} \dist_1\gr{x}} \gr*{\sum_{z \in \invlumap\gr{\lu{x}}} \dist_2\gr{z}}
		= \frac{c}{c}
		= 1.
	\end{align*}
	We now have that $\dist_\alpha$ is a positive real-valued function on $\stsp$ with $\sum_{x \in \stsp} \dist_\alpha\gr{x} = 1$, hence $\dist_\alpha$ is indeed a positive distribution on $\stsp$.
	\qed
\end{proof}

\begin{lemma}
\label{lem:Set of lumped trms:Properties}
	Let $\trm$ be an irreducible transition rate matrix and $\lumap \colon \stsp{} \to \lustsp{}$ a lumping map.
	The associated set~$\setoflutrm{}$ of lumped transition rate matrices: (i) is non-empty and bounded, (ii) is convex and (iii) has separately specified rows.
	Furthermore, every $\lutrm$ in $\setoflutrm{}$ is irreducible, and $\norm{\lutrm} \leq \norm{\trm}$.
\end{lemma}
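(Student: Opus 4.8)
The plan is to dispatch the five assertions in turn, leaning on the two preparatory lemmas. That $\setoflutrm{}$ is non-empty is immediate, since $\setofposdist\gr{\stsp}$ contains, for instance, the uniform distribution on the finite non-empty set $\stsp$; that every $\lutrm$ in $\setoflutrm{}$ is irreducible is exactly the content of Lemma~\ref{lem:lutrm_posdist is a trm and irreducible}. Boundedness of $\setoflutrm{}$ will be automatic once I establish $\norm{\lutrm} \leq \norm{\trm}$, because that bound is uniform in $\dist$. For the norm inequality I would use the characterisation \eqref{eqn:Norm of trm}: since $[\lutrm_\dist \indica{\lu{x}}]\gr{\lu{x}} = \lutrm_\dist\gr{\lu{x}, \lu{x}}$, it gives $\norm{\lutrm_\dist} = 2 \max \set{ -\lutrm_\dist\gr{\lu{x}, \lu{x}} \colon \lu{x} \in \lustsp{} }$. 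Evaluating \eqref{eqn:lutrm_dist defintion} at $\lu{y} = \lu{x}$ and using that the off-diagonal entries of $\trm$ are non-negative gives $\sum_{y \in \invlumap\gr{\lu{x}}} \trm\gr{x, y} \geq \trm\gr{x, x}$ for every $x$ in $\invlumap\gr{\lu{x}}$, whence $-\lutrm_\dist\gr{\lu{x}, \lu{x}}$ is at most the convex combination $\sum_{x \in \invlumap\gr{\lu{x}}} \gr{\dist\gr{x} / \sum_{z \in \invlumap\gr{\lu{x}}}\dist\gr{z}} \gr{-\trm\gr{x, x}}$, which is bounded by $\max \set{ -\trm\gr{x, x} \colon x \in \stsp }$. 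Multiplying by two and invoking \eqref{eqn:Norm of trm} once more yields $\norm{\lutrm_\dist} \leq \norm{\trm}$.

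For convexity, fix $\lutrm_{\dist_1}$ and $\lutrm_{\dist_2}$ in $\setoflutrm{}$ and $\alpha$ in $(0, 1)$. I claim that the positive distribution $\dist_\alpha$ supplied by Lemma~\ref{lem:dist_alpha} satisfies $\lutrm_{\dist_\alpha} = \alpha \lutrm_{\dist_1} + \gr{1 - \alpha} \lutrm_{\dist_2}$, which proves $\setoflutrm{}$ convex. The computation is the one Lemma~\ref{lem:dist_alpha} was tailored for: summing \eqref{eqn:dist_alpha} over $x$ in $\invlumap\gr{\lu{x}}$ makes the normaliser collapse and gives $\sum_{z \in \invlumap\gr{\lu{x}}} \dist_\alpha\gr{z}$ equal to $\gr*{\sum_{z \in \invlumap\gr{\lu{x}}} \dist_1\gr{z}}\gr*{\sum_{z \in \invlumap\gr{\lu{x}}} \dist_2\gr{z}}$ divided by the same constant as in \eqref{eqn:dist_alpha}; dividing, the weight $\dist_\alpha\gr{x} / \sum_{z \in \invlumap\gr{\lu{x}}}\dist_\alpha\gr{z}$ simplifies exactly to $\alpha\, \dist_1\gr{x} / \sum_{z \in \invlumap\gr{\lu{x}}} \dist_1\gr{z} + \gr{1 - \alpha}\, \dist_2\gr{x} / \sum_{z \in \invlumap\gr{\lu{x}}} \dist_2\gr{z}$, and substituting this into \eqref{eqn:lutrm_dist defintion} yields the desired row-wise convex combination. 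Since $\dist_\alpha$ lies in $\setofposdist\gr{\stsp}$ by Lemma~\ref{lem:dist_alpha}, the combination indeed belongs to $\setoflutrm{}$.

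For separately specified rows, the key observation is that the $\lu{x}$-th row of $\lutrm_\dist$ in \eqref{eqn:lutrm_dist defintion} depends on $\dist$ only through its restriction to $\invlumap\gr{\lu{x}}$, and only up to positive rescaling. So, given any $\card{\lustsp}$-tuple $\gr{\lutrm_{\lu{x}}}_{\lu{x} \in \lustsp}$ of elements of $\setoflutrm{}$, say $\lutrm_{\lu{x}} = \lutrm_{\dist_{\lu{x}}}$ with each $\dist_{\lu{x}}$ in $\setofposdist\gr{\stsp}$, I would glue the normalised restrictions over the partition $\set{\invlumap\gr{\lu{x}} \colon \lu{x} \in \lustsp}$ of $\stsp$ by setting $\dist^\star\gr{x} \coloneqq \card{\lustsp}^{-1}\, \dist_{\lumap\gr{x}}\gr{x} / \sum_{z \in \invlumap\gr{\lumap\gr{x}}} \dist_{\lumap\gr{x}}\gr{z}$. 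Then $\dist^\star$ is positive and, because the blocks $\invlumap\gr{\lu{x}}$ partition $\stsp$, sums to one, so $\dist^\star$ lies in $\setofposdist\gr{\stsp}$; moreover $\sum_{z \in \invlumap\gr{\lu{x}}}\dist^\star\gr{z} = \card{\lustsp}^{-1}$, so for every $\lu{x}$ and $\lu{y}$ one checks $\dist^\star\gr{x} / \sum_{z \in \invlumap\gr{\lu{x}}}\dist^\star\gr{z} = \dist_{\lu{x}}\gr{x} / \sum_{z \in \invlumap\gr{\lu{x}}}\dist_{\lu{x}}\gr{z}$, hence $\lutrm_{\dist^\star}\gr{\lu{x}, \lu{y}} = \lutrm_{\dist_{\lu{x}}}\gr{\lu{x}, \lu{y}} = \lutrm_{\lu{x}}\gr{\lu{x}, \lu{y}}$. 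Thus $\lutrm^\star \coloneqq \lutrm_{\dist^\star}$ witnesses Definition~\ref{def:separately specified rows}.

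I expect the separately-specified-rows part to be the main conceptual point: not that any single manipulation is hard, but that one has to recognise that the rows of $\lutrm_\dist$ decouple over the blocks $\invlumap\gr{\lu{x}}$ and that normalisation renders each row insensitive to the choice of $\dist$ within its block, after which the gluing construction is routine. The convexity identity is longer but purely mechanical, and is in any case already isolated in Lemma~\ref{lem:dist_alpha}; the norm bound is a short one-line estimate once \eqref{eqn:Norm of trm} is applied.
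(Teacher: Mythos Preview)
Your proof is correct and follows essentially the same approach as the paper's: non-emptiness and irreducibility are immediate from the preparatory lemmas, boundedness is reduced to the norm bound, convexity is verified via the tailor-made distribution $\dist_\alpha$ of Lemma~\ref{lem:dist_alpha}, and separately specified rows are obtained by gluing block-wise distributions into a single positive distribution. The only cosmetic difference is your choice of normalisation for $\dist^\star$ (you normalise within each block to mass $\card{\lustsp}^{-1}$, whereas the paper normalises $\dist_{\lumap(x)}(x)$ globally over $\stsp$), but both constructions yield the same conditional weights on each block and hence the same $\lutrm_{\dist^\star}$.
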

\begin{proof}
	We start with proving (i).
	Note that it is immediate from \eqref{eqn:setoflutrm} that $\setoflutrm$ is non-empty as $\setofposdist\gr{\stsp}$ is non-empty.
	The boundedness of $\setoflutrm$ follows from the last sentence of the stated, which we will prove last.

	We therefore move on to proving (ii).
	To that end, we fix two arbitrary elements of $\setoflutrm{}$, denoted by $\lutrm_1$ and $\lutrm_2$.
	Note that because of the way $\setoflutrm$ is constructed, there is a positive distribution $\dist_1$ ($\dist_2$) on $\stsp$ such that $\lutrm_{\dist_1} = \lutrm_1$ ($\lutrm_{\dist_2} = \lutrm_2$).
	Fix now an arbitrary $\alpha$ in the open unit interval $(0, 1)$, and let $\lutrm_{\alpha} \coloneqq \alpha \lutrm_1 + (1 - \alpha) \lutrm_2$.
	Then for all $\lu{x}$ and $\lu{y}$ in $\lustsp$,
	\begin{align*}
		&\lutrm_{\alpha}\gr{\lu{x}, \lu{y}}
		= \alpha \lutrm_1\gr{\lu{x}, \lu{y}} + \gr{1 - \alpha} \lutrm_2\gr{\lu{x}, \lu{y}} \\
		&= \sum_{x \in \invlumap\gr{\lu{x}}} \gr*{
			\alpha \gr*{\frac{\dist_1\gr{x}}{\sum_{z \in \invlumap\gr{\lu{x}}} \dist_1\gr{z}}}
			+ \gr{1 - \alpha} \gr*{\frac{\dist_2\gr{x}}{\sum_{z \in \invlumap\gr{\lu{x}}} \dist_2\gr{z}}}
		} \sum_{y \in \invlumap\gr{\lu{y}}} \trm\gr{x, y} \\
		&= \sum_{x \in \invlumap\gr{\lu{x}}} \frac{
			\alpha \gr*{\sum_{z \in \invlumap\gr{\lu{x}}} \dist_2\gr{z}} \dist_1\gr{x}
			+ \gr{1 - \alpha} \gr*{\sum_{z \in \invlumap\gr{\lu{x}}} \dist_1\gr{z}} \dist_2\gr{x}
		}{
			\gr*{\sum_{z \in \invlumap\gr{\lu{x}}} \dist_1\gr{z}} \gr*{\sum_{z \in \invlumap\gr{\lu{x}}} \dist_2\gr{z}}
		}\hspace{-5pt}\sum_{y \in \invlumap\gr{\lu{y}}} \hspace{-4pt}\trm\gr{x, y}.
	\end{align*}
	Dividing both the numerator and the denominator of the fraction in the expression above by $\sum_{\lu{z} \in \lustsp} \gr*{\sum_{z \in \invlumap\gr{\lu{z}}} \dist_1\gr{z}} \gr*{\sum_{z \in \invlumap\gr{\lu{z}}} \dist_2\gr{z}}$ yields
	\[
		\lutrm_{\alpha}\gr{\lu{x}, \lu{y}}
		= \sum_{x \in \invlumap\gr{\lu{x}}} \frac{
			\dist_\alpha\gr{x}
		}{
			\sum_{z \in \invlumap\gr{\lu{x}}} \dist_\alpha\gr{z}
		} \sum_{y \in \invlumap\gr{\lu{y}}} \trm\gr{x, y},
	\]
	where $\dist_\alpha$ is defined as in Lemma~\ref{lem:dist_alpha}.
	Since we know from Lemma~\ref{lem:dist_alpha} that $\dist_\alpha$ is a positive distribution on $\stsp$, it follows from \eqref{eqn:setoflutrm} that $\lutrm_{\alpha}$ is an element of $\setoflutrm$.
	As $\lutrm_1$, $\lutrm_2$ and $\alpha$ were arbitrary, this proves that the set $\setoflutrm$ is convex.

	Next, we prove (iii).
	To that end, we fix an arbitrary $\card{\lustsp}$-tuple $\gr{ \lutrm_{\lu{x}} \colon \lu{x} \in \lustsp }$ of which the entries---one for every state---are all elements of $\setoflutrm$.
	We know from \eqref{eqn:setoflutrm} that, for any $\lu{x}$ in $\lustsp$, there is a positive distribution $\dist_{\lu{x}}$ on $\stsp$ such that $\lutrm_{\dist_{\lu{x}}} = \lutrm_{\lu{x}}$.
	Following Definition~\ref{def:separately specified rows}, we now construct a matrix~$\lutrmst$, defined by
	\[
		\lutrmst\gr{\lu{x}, \lu{y}}
		\coloneqq \lutrm_{\lu{x}}\gr{\lu{x}, \lu{y}}
		\text{ for all } \lu{x}, \lu{y} \in \stsp.
	\]
	We need to prove that $\lutrmst$ is an element of $\setoflutrm$.
	To verify this, we observe that for all $\lu{x}$ and $\lu{y}$ in $\lustsp$,
	\[
		\lutrmst\gr{\lu{x}, \lu{y}}
		= \sum_{x \in \invlumap\gr{\lu{x}}} \frac{\dist_{\lu{x}}\gr{x}}{\sum_{z \in \invlumap\gr{\lu{x}}} \dist_{\lu{x}}\gr{z}} \sum_{y \in \invlumap\gr{\lu{y}}} \trm\gr{x, y}.
	\]
	We now divide both the numerator and the denominator of the fraction in the expression above by $\sum_{z \in \stsp} \dist_{\lumap\gr{x}}\gr{x}$, to yield
	\[
		\lutrmst\gr{\lu{x}, \lu{y}}
		= \sum_{x \in \invlumap\gr{\lu{x}}} \frac{\distst\gr{x}}{\sum_{z \in \invlumap\gr{\lu{x}}} \distst\gr{z}} \sum_{y \in \invlumap\gr{\lu{y}}} \trm\gr{x, y},
	\]
	where $\distst$ is the positive distribution---one can easily verify that this is indeed the case---on $\stsp$ defined by
	\[
		\distst\gr{x}
		\coloneqq \frac{\dist_{\lumap\gr{x}}\gr{x}}{ \sum_{z \in \stsp} \dist_{\lumap\gr{z}}\gr{z} }
		\quad\text{for all $x \in \stsp$}.
	\]
	Because this final equality holds for all $\lu{x}$ and $\lu{y}$ in $\lustsp$, we find conclude that $\lutrmst$ is indeed an element of $\setoflutrm$.

	Next, we fix an arbitrary $\lutrm$ in $\setoflutrm$.
	Let $\dist$ be the positive distribution on $\stsp$ such that $\lutrm_{\dist} = \lutrm$.
	Then $\lutrm$ is irreducible by Lemma~\ref{lem:lutrm_posdist is a trm and irreducible}.
	Furthermore,
	\begin{align*}
		\norm{\lutrm}
		&= 2 \max \set*{ \abs{\lutrm\gr{\lu{x}, \lu{x}}} \colon \lu{x} \in \lustsp } \\
		&=2 \max \set*{ \abs*{
			\sum_{x \in \invlumap\gr{\lu{x}}} \frac{\dist\gr{x}}{\sum_{z \in \invlumap\gr{\lu{x}}} \dist\gr{z}}
			\sum_{y \in \invlumap\gr{\lu{x}}} \trm\gr{x, y}
		} \colon \lu{x} \in \lustsp } \\
		&\leq 2 \max \set*{
			\sum_{x \in \invlumap\gr{\lu{x}}} \frac{\dist\gr{x}}{\sum_{z \in \invlumap\gr{\lu{x}}} \dist\gr{z}}
			\abs*{ \sum_{y \in \invlumap\gr{\lu{x}}} \trm\gr{x, y}
		} \colon \lu{x} \in \lustsp } \\
		&\leq 2 \max \set*{
			\sum_{x \in \invlumap\gr{\lu{x}}} \frac{\dist\gr{x}}{\sum_{z \in \invlumap\gr{\lu{x}}} \dist\gr{z}} \abs*{\trm\gr{x, x}}
		\colon \lu{x} \in \lustsp } \\
		&\leq 2 \max \set*{ \max\set*{ \abs{\trm\gr{x, x}} \colon x \in \invlumap\gr{\lu{x}}} \colon \lu{x} \in \lustsp } \\
		&= 2 \max \set*{ \abs{\trm\gr{x, x}} \colon x \in \stsp }
		= \norm{\trm},
	\end{align*}
	where the first and last equality follow from \eqref{eqn:Norm of trm}, the second equality follows from \eqref{eqn:lutrm_dist defintion}, the first inequality follows from the triangle inequality, the second inequality follows from the properties of transition rate matrices---i.e., non-negative off-diagonal elements and rows that sum to zero--- and where for the third inequality we use the fact that a convex combination of real numbers is always lower than the maximum of these real numbers.
	\qed
\end{proof}

	\subsection{The Lower Transition (Rate) Operator Corresponding to the Set of Lumped Transition Rate Matrices}
Since $\setoflutrm$ is non-empty and bounded by Lemma~\ref{lem:Set of lumped trms:Properties}, we know from Appendix~\ref{sapp:LTROs} that it has an associated lower transition rate operator $\lultro \colon \setoffn\gr{\lustsp} \to \setoffn\gr{\lustsp}$, defined by \eqref{eqn:ltro}:
\begin{equation}
\label{eqn:ltro for lumped}
	[\lultro \lu{f}]\gr{\lu{x}}
	= \inf \set{[\lutrm \lu{f}](\lu{x}) \colon \lutrm \in \setoflutrm{}}
	\quad\text{for all $\lu{x}$ in $\lustsp$ and all $\lu{f}$ in $\setoffn\gr{\lustsp}$.}
\end{equation}
Note that \eqref{eqn:lumped ltro}, the definition for $\lultro$ in the main text, differs from \eqref{eqn:ltro for lumped}, its proper definition.
These two definitions turn out to be equal in this case, as is stated in the following result.
\begin{proposition}
	If $\trm$ is an irreducible transition rate matrix and $\lumap \colon \stsp \to \lustsp$ a lumping map, then for all $\lu{x}$ in $\lustsp$ and $\lu{f}$ in $\setoffn\gr{\lustsp}$,
	\[
		[\lultro \lu{f}]\gr{\lu{x}}
		= \min \set*{ \sum_{\lu{y} \in \lustsp} \lu{f}\gr{\lu{y}} \sum_{y \in \invlumap\gr{\lu{y}}} Q(x, y) \colon x \in \invlumap\gr{\lu{x}} }.
	\]
\end{proposition}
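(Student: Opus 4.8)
The plan is to unfold the definition \eqref{eqn:ltro for lumped} of $\lultro$ and to exploit the fact that, for a fixed $\lu{x}$, the number $[\lutrm_{\dist} \lu{f}]\gr{\lu{x}}$ is a strictly positive convex combination of finitely many reals that do not depend on $\dist$. Concretely, fix $\lu{x}$ in $\lustsp$ and $\lu{f}$ in $\setoffn\gr{\lustsp}$, and for every $x$ in $\invlumap\gr{\lu{x}}$ abbreviate $c_x \coloneqq \sum_{\lu{y} \in \lustsp} \lu{f}\gr{\lu{y}} \sum_{y \in \invlumap\gr{\lu{y}}} \trm\gr{x, y}$. Substituting the definition \eqref{eqn:lutrm_dist defintion} of $\lutrm_{\dist}$ into $[\lutrm_{\dist} \lu{f}]\gr{\lu{x}} = \sum_{\lu{y} \in \lustsp} \lutrm_{\dist}\gr{\lu{x}, \lu{y}} \lu{f}\gr{\lu{y}}$ and interchanging the two finite summations yields $[\lutrm_{\dist} \lu{f}]\gr{\lu{x}} = \sum_{x \in \invlumap\gr{\lu{x}}} w_x\, c_x$, where $w_x \coloneqq \dist\gr{x} / \sum_{z \in \invlumap\gr{\lu{x}}} \dist\gr{z}$ is a strictly positive weight with $\sum_{x \in \invlumap\gr{\lu{x}}} w_x = 1$. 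Hence $[\lultro \lu{f}]\gr{\lu{x}}$ is the infimum over $\dist$ in $\setofposdist\gr{\stsp}$ of such convex combinations.

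From here the proof splits into two inequalities. The bound $[\lultro \lu{f}]\gr{\lu{x}} \geq \min \set{ c_x \colon x \in \invlumap\gr{\lu{x}} }$ is immediate, since every convex combination of the $c_x$ is at least their minimum and this survives taking the infimum. For the reverse bound I would let $x^{\star}$ attain $\min \set{ c_x \colon x \in \invlumap\gr{\lu{x}} }$ and, for each $\epsilon$ in $(0, 1)$, take any positive distribution $\dist^{\epsilon}$ on $\stsp$ with $\dist^{\epsilon}\gr{x^{\star}} = 1 - \epsilon$ and $\dist^{\epsilon}\gr{x} \leq \epsilon$ for the other states of $\invlumap\gr{\lu{x}}$; then the corresponding weights satisfy $w_{x^{\star}} \to 1$ and $w_x \to 0$ for $x \neq x^{\star}$ as $\epsilon \to 0^{+}$, so $[\lutrm_{\dist^{\epsilon}} \lu{f}]\gr{\lu{x}} \to c_{x^{\star}}$ and therefore $[\lultro \lu{f}]\gr{\lu{x}} \leq c_{x^{\star}}$. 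Combining the two inequalities gives the stated identity, and because $\invlumap\gr{\lu{x}}$ is finite the infimum is attained, so it may legitimately be written as a minimum.

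I do not expect a real obstacle here; the one subtlety worth stating carefully is that the optimisation in \eqref{eqn:ltro for lumped} ranges only over \emph{positive} distributions, so the Dirac weight vectors on $\invlumap\gr{\lu{x}}$ are not themselves admissible and the lower bound must be approached by the limiting construction above rather than realised by a single optimal $\dist$. An equivalent way to phrase this last point is to note that $\set{ (w_x)_{x \in \invlumap\gr{\lu{x}}} \colon \dist \in \setofposdist\gr{\stsp} }$ is exactly the relative interior of the probability simplex over $\invlumap\gr{\lu{x}}$, whose closure is the whole simplex, so the infimum over it of the linear map $w \mapsto \sum_x w_x c_x$ equals the minimum over the simplex, namely $\min_x c_x$. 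The only genuinely mechanical step is the interchange of finite sums in the first paragraph.
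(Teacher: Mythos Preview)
Your proposal is correct and follows essentially the same approach as the paper: both arguments rewrite $[\lutrm_{\dist}\lu{f}](\lu{x})$ as a strictly positive convex combination of the values $c_x$ (called $f_x$ in the paper), obtain the lower bound trivially, and establish the reverse inequality by a sequence of positive distributions concentrating mass on a minimiser $x^{\star}$. Your closing remark about the relative interior of the simplex is a clean alternative phrasing of the same limiting argument, but the underlying idea is identical.
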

\begin{proof}
	Let
	\[
		f_x
		\coloneqq \sum_{\lu{y} \in \lustsp} \lu{f}\gr{\lu{y}} \sum_{y \in \invlumap\gr{\lu{y}}} \trm\gr{x, y}
		\quad\text{for all $x \in \invlumap\gr{\lu{x}}$.}
	\]
	Then we need to prove that
	\[
		[\lultro \lu{f}]\gr{\lu{x}}
		= \min \set*{ f_x \colon x \in \invlumap\gr{\lu{x}} }.
	\]
	By combining \eqref{eqn:setoflutrm} and \eqref{eqn:ltro for lumped}, we find that
	\[
		[\lultro \lu{f}]\gr{\lu{x}}
		= \inf \set{[\lutrm \lu{f}](\lu{x}) \colon \lutrm \in \setoflutrm{}}
		=\inf \set{[\lutrm_{\dist} \lu{f}](\lu{x}) \colon \dist \in \setofposdist\gr{\stsp}}.
	\]
	Explicitly writing out the matrix-vector product $[\lutrm_{\dist} \lu{f}]\gr{\lu{x}}$ yields
	\begin{align*}
		[\lutrm_{\dist} \lu{f}]\gr{\lu{x}}
		&= \sum_{\lu{y} \in \lustsp} \lu{f}\gr{\lu{y}} \lutrm_{\dist}\gr{\lu{x}, \lu{y}} = \sum_{\lu{y} \in \lustsp} \lu{f}\gr{\lu{y}} \sum_{x \in \invlumap\gr{\lu{x}}} \frac{\dist\gr{x}}{\sum_{z \in \invlumap\gr{\lu{x}}} \dist\gr{z}} \sum_{y \in \invlumap\gr{\lu{y}}} \trm\gr{x, y} \\
		&= \sum_{x \in \invlumap\gr{\lu{x}}} \frac{\dist\gr{x}}{\sum_{z \in \invlumap\gr{\lu{x}}} \dist\gr{z}} \sum_{\lu{y} \in \lustsp} \lu{f}\gr{\lu{y}} \sum_{y \in \invlumap\gr{\lu{y}}} \trm\gr{x, y} \\
		&= \sum_{x \in \invlumap\gr{\lu{x}}} \frac{\dist\gr{x}}{\sum_{z \in \invlumap\gr{\lu{x}}} \dist\gr{z}} f_x.
	\end{align*}
	Hence, we need to prove that
	\[
		\inf \set*{\sum_{x \in \invlumap\gr{\lu{x}}} \frac{\dist\gr{x}}{\sum_{z \in \invlumap\gr{\lu{x}}} \dist\gr{z}} f_x \colon \dist \in \setofposdist\gr{\stsp}}
		= \min \set*{f_x \colon x \in \invlumap\gr{\lu{x}}}.
	\]
	Note that the right hand side is clearly a lower bound for
	\[
		\inf\set*{\sum_{x \in \invlumap\gr{\lu{x}}} \frac{\dist\gr{x}}{\sum_{z \in \invlumap\gr{\lu{x}}} \dist\gr{z}} f_x \colon \dist \in \setofposdist\gr{\stsp}}.
	\]
	We now show that it is the tightest lower bound---i.e., the infimum---of this set.
	To that end, we construct a sequence $\{ \dist_n \}_{n \in \nats}$ in $\setofposdist{}$ such that the induced sequence
	\[
		\set*{ \sum_{x \in \invlumap\gr{\lu{x}}} \frac{\dist_n\gr{x}}{\sum_{z \in \invlumap\gr{\lu{x}}} \dist_n\gr{z}} f_x }_{n \in \nats}
	\]
	converges to $\min \set*{f_x \colon x \in \invlumap\gr{\lu{x}}}$.
	Let $x^{\star}$ be an element of $\invlumap\gr{\lu{x}}$ such that $f_{x^{\star}} = \min\set*{ f_x \colon x \in \invlumap\gr{\lu{x}} }$, $c \coloneqq \frac{1}{\card{\stsp}}$ and $m \coloneqq \card{\invlumap\gr{\lu{x}}}$.
	For all $n$ in $\nats$, we define the positive distribution $\dist_n$ on $\stsp$ by
	\[
		\dist_{n}\gr{x}
		\coloneqq \begin{dcases}
			c &\text{if } x \not\in \invlumap\gr{\lu{x}} \\
			c m - \frac{c}{n} \gr{m - 1} &\text{if } x = x^{\star} \\
			\frac{c}{n} &\text{otherwise}
		\end{dcases}
		\quad\text{for all $x$ in $\stsp$.}
	\]
	Then clearly,
	\begin{multline*}
		\lim_{n \to +\infty} \sum_{x \in \invlumap\gr{\lu{x}}} \frac{\dist_n\gr{x}}{\sum_{z \in \invlumap\gr{\lu{x}}} \dist_n\gr{z}} f_x \\
		= \lim_{n \to +\infty}  \gr*{\gr*{1 - \frac{m-1}{m n}} f_{x^{\star}} + \sum_{x \in \invlumap\gr{\lu{x}} \colon x \neq x^{\star}} \frac{1}{m n} f_x}
		= f_{x^{\star}} \\
		= \min\set*{ f_x \colon x \in \invlumap\gr{\lu{x}} }.
	\end{multline*}
	\qed
\end{proof}

The following result states that $\lultro$ is irreducible, which is to be expected as it is the lower envelope of a set of irreducible transition rate matrices.
\begin{corollary}
\label{cor:lultro is ergodic}
	If $\trm$ is an irreducible transition rate matrix and $\lumap \coloneqq \stsp \to \lustsp$ a lumping map, then $\lultro$ is \emph{irreducible}.
\end{corollary}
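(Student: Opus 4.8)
The plan is to verify Definition~\ref{def:Imprecise irreducibility} directly: I will show that every $\lu{x}$ in $\lustsp$ is upper reachable from every $\lu{y}$ in $\lustsp$, i.e.\ that $\lustsp_{\mathrm{top}} = \lustsp$. Rather than wrestling with the optimisation defining $\lultro$, the idea is to exploit that $\lultro$ is a \emph{lower} envelope: it lies pointwise below every member of $\setoflutrm$, and $\setoflutrm$ already contains \emph{precise} transition rate matrices that are irreducible. A short sign manipulation then turns an ordinary accessibility path for one such precise matrix into an upper reachability path for $\lultro$.

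Concretely, I would first invoke Lemma~\ref{lem:Set of lumped trms:Properties} (together with Lemma~\ref{lem:lutrm_posdist is a trm and irreducible}): since $\setofposdist\gr{\stsp}$ is non-empty, so is $\setoflutrm$, and I fix any $\lutrm$ in $\setoflutrm$; this $\lutrm$ is an irreducible transition rate matrix on $\lustsp$. Next, I fix arbitrary distinct $\lu{x}, \lu{y}$ in $\lustsp$. By Proposition~\ref{prop:Precise irreducibility}, $\lu{x}$ is accessible from $\lu{y}$ with respect to $\lutrm$, so there is a sequence $\lu{y} = \lu{x}_0, \dots, \lu{x}_n = \lu{x}$ in $\lustsp$ with $\lutrm\gr{\lu{x}_{i-1}, \lu{x}_i} > 0$ for all $i$ in $\set{1, \dots, n}$.

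The key step is to upgrade positivity of these precise rates to positivity of the ``upper rates'' $-[\lultro\gr{-\indica{\lu{x}_i}}]\gr{\lu{x}_{i-1}}$ that occur in the definition of upper reachability. Because $\lultro$ is the lower envelope of $\setoflutrm$ by \eqref{eqn:ltro for lumped}, we have $[\lultro g]\gr{\lu{w}} \leq [\lutrm g]\gr{\lu{w}}$ for every $g$ in $\setoffn\gr{\lustsp}$ and every $\lu{w}$ in $\lustsp$. Applying this with $g = -\indica{\lu{x}_i}$ and using $[\lutrm\gr{-\indica{\lu{x}_i}}]\gr{\lu{x}_{i-1}} = -\lutrm\gr{\lu{x}_{i-1}, \lu{x}_i}$ yields $-[\lultro\gr{-\indica{\lu{x}_i}}]\gr{\lu{x}_{i-1}} \geq \lutrm\gr{\lu{x}_{i-1}, \lu{x}_i} > 0$ for all $i$. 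Hence the same sequence $\lu{x}_0, \dots, \lu{x}_n$ witnesses $\lu{y} \upacces \lu{x}$. Since $\lu{x}$ and $\lu{y}$ were arbitrary (and $\lu{x} = \lu{y}$ is trivial), $\lustsp_{\mathrm{top}} = \lustsp$, so $\lultro$ is irreducible by Definition~\ref{def:Imprecise irreducibility}.

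I do not expect a real obstacle here; the only things to handle with care are the sign bookkeeping in the inequality $-[\lultro\gr{-\indica{\lu{x}_i}}]\gr{\lu{x}_{i-1}} \geq \lutrm\gr{\lu{x}_{i-1}, \lu{x}_i}$ and matching the precise wording of ``upper reachable'' in Definition~\ref{def:Imprecise irreducibility}. An alternative, more laborious route would invoke the explicit $\min$-formula for $\lultro$ proved just above and mimic the path-contraction argument from the proof of Lemma~\ref{lem:lutrm_posdist is a trm and irreducible}, but the envelope-domination argument avoids that repetition.
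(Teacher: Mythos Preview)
Your proposal is correct and follows essentially the same approach as the paper: fix an irreducible $\lutrm$ in $\setoflutrm$ (via Lemma~\ref{lem:Set of lumped trms:Properties}), take its accessibility path from Proposition~\ref{prop:Precise irreducibility}, and use the lower-envelope inequality $[\lultro g] \leq [\lutrm g]$ with $g = -\indica{\lu{x}_i}$ to deduce $-[\lultro(-\indica{\lu{x}_i})](\lu{x}_{i-1}) \geq \lutrm(\lu{x}_{i-1}, \lu{x}_i) > 0$. The paper writes this last step as an explicit $\inf/\sup$ manipulation, but it is the same argument.
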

\begin{proof}
	Recall from Lemma~\ref{lem:Set of lumped trms:Properties} that any $\lutrm$ in $\setoflutrm$ is irreducible.
	Fix now any arbitrary $\lutrm^{\star}$ in $\setoflutrm$.
	Then for any distinct $\lu{x}$ and $\lu{y}$ in $\lustsp$, there is a sequence $\lu{y} = \lu{x}_1, \dots, \lu{x}_n = \lu{x}$ in $\lustsp$ such that $\lutrm^{\star}\gr{\lu{x}_{i-1}, \lu{x}_i} > 0$ for all $i$ in $\set{1, \dots, n}$.
	By \eqref{eqn:ltro for lumped}, it then clearly holds for any $i$ in $\set{1, \dots, n}$ that
	\begin{align*}
		- [\lultro \gr{- \indica{\lu{x}_i}}]\gr{\lu{x}_{i-1}}
		&= - \inf \set*{ - \lutrm\gr{\lu{x}_{i-1}, \lu{x}_i} \colon \lutrm \in \setoflutrm} \\
		&= \sup \set*{ \lutrm\gr{\lu{x}_{i-1}, \lu{x}_i} \colon \lutrm \in \setoflutrm}
		\geq \lutrm^{\star}\gr{\lu{x}_{i-1}, \lu{x}_i} > 0.
	\end{align*}
	Consequently, $\lu{y} \upacces \lu{x}$ for any arbitrary $\lu{x}$ and $\lu{y}$, which proves the stated.
	\qed
\end{proof}

	\subsection{Laying Down the Last Pieces of the Puzzle}
For any positive and irreducible CTMC $\probm$ with initial distribution $\inidist$ and any lumping map $\lumap \colon \stsp \to \lustsp$, we define the \emph{lumped initial distribution}
\begin{equation}
\label{eqn:Lumped initial distribution}
	\luinidist{}
	\colon \lustsp \to \reals
	\colon \lu{x} \mapsto \luinidist\gr{\lu{x}}
	\coloneqq \sum_{x \in \invlumap\gr{\lu{x}}} \inidist\gr{x}.
\end{equation}
It can be immediately verified that $\luinidist$ is a positive distribution on $\lustsp$.
Furthermore, using \eqref{eqn:New defintion of lumped stochastic process:Marginal}, Lemma~\ref{lem:Lumped process:One step conditional}, \eqref{eqn:HCTMC initial} and \eqref{eqn:Lumped initial distribution} yields that
\begin{align*}
	\problum\gr{\lu{X}_0 = \lu{x}}
	&= \problum\gr{\lu{X}_0 = \lu{x} \cond \lu{X}_{\emptytseq{}} = \lu{x}_{\emptytseq{}}} \\
	&= \frac{\sum_{x \in \invlumap\gr{\lu{x}}} \probm\gr{X_0 = x \cond X_\emptytseq{} = x_{\emptytseq{}}} \probm\gr{X_\emptytseq{} = x_{\emptytseq{}}} }{\probm\gr{X_\emptytseq{} = x_{\emptytseq{}}} } \\
	&= \sum_{x \in \invlumap\gr{\lu{x}}} \probm\gr{X_0 = x}
	= \sum_{x \in \invlumap\gr{\lu{x}}} \inidist\gr{x}
	= \luinidist\gr{\lu{x}}.
\end{align*}
Hence, if we let $\lu\credset \coloneqq \set{\lu\dist_0}$, we see that the lumped stochastic process $\problum$ is \emph{consistent} with $\lu\credset$, see Appendix~\ref{sapp:Sets of consistent processes and lower expectations}.
The following intermediary result, which follows immediately from Proposition~\ref{prop:Instantaneous transition rate of the lumped stochastic process} and \eqref{eqn:setoflutrm}, states that it is also consistent with $\setoflutrm{}$; again, see Appendix~\ref{sapp:Sets of consistent processes and lower expectations}.
\begin{corollary}
\label{cor:Instantaneous transition rate of the lumped stochastic process}
	Let $\markov$ be a positive and irreducible CTMC and let $\lumap \colon \stsp \to \lustsp$ be a lumping map.
	Fix any $t$ in $\nnegreals{}$, any $u$ in $\setoftseql{t}$ and any $\lu{x}_{u}$ in $\lustsp_{u}$.
	Then there is a unique element $\lutrm_{\gr{u,\lu{x}_{u}, t}}$ of $\setoflutrm$ such that, for all $\lu{x}, \lu{y}$ in $\lustsp$,
	\begin{align}
	\label{eqn:inst trm right}
		\lim_{\Delta \to 0^{+}} \frac{\problum\gr{ \lu{X}_{t+\Delta} = \lu{y} \cond \lu{X}_{u} = \lu{x}_{u}, \lu{X}_{t} = \lu{x} } - \indica{\lu{x}}\gr{\lu{y}}}{\Delta}= \lutrm_{\gr{u,\lu{x}_{u}, t}} \gr{\lu{x}, \lu{y}}
	\intertext{and, if $t \neq 0$,}
	\label{eqn:inst trm left}
		\lim_{\Delta \to 0^{+}} \frac{\problum\gr{ \lu{X}_{t} = \lu{y} \cond \lu{X}_{u} = \lu{x}_{u}, \lu{X}_{t - \Delta} = \lu{x} } - \indica{\lu{x}}\gr{\lu{y}}}{\Delta}= \lutrm_{\gr{u,\lu{x}_{u}, t}} \gr{\lu{x}, \lu{y}}.
	\end{align}
\end{corollary}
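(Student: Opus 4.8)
The plan is to read this corollary as a direct reformulation of Proposition~\ref{prop:Instantaneous transition rate of the lumped stochastic process} in the language of the set $\setoflutrm$. The only genuine content is (a) to recognise the convex combination appearing on the right-hand side of \eqref{eqn:inst trm:from the right} and \eqref{eqn:inst trm:from the left} as an entry of a specific matrix $\lutrm_{\dist}$ of the form \eqref{eqn:lutrm_dist defintion}, and (b) to argue uniqueness within $\setoflutrm$.

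For existence, fix $t$, $u$ and $\lu{x}_u$ as in the statement and put $\dist \coloneqq \dist_{\gr{u, \lu{x}_u, t}}$. By Lemma~\ref{lem:The fraction is a positive distribution}, $\dist$ is a positive distribution on $\stsp$, so by \eqref{eqn:setoflutrm} the matrix $\lutrm_{\dist}$, defined through \eqref{eqn:lutrm_dist defintion}, is an element of $\setoflutrm$. Comparing \eqref{eqn:lutrm_dist defintion} term by term with the right-hand sides of \eqref{eqn:inst trm:from the right} and, when $t \neq 0$, \eqref{eqn:inst trm:from the left}, we see that those right-hand sides equal $\lutrm_{\dist}\gr{\lu{x}, \lu{y}}$ for every $\lu{x}, \lu{y}$ in $\lustsp$. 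Proposition~\ref{prop:Instantaneous transition rate of the lumped stochastic process} therefore yields \eqref{eqn:inst trm right} and \eqref{eqn:inst trm left} with the choice $\lutrm_{\gr{u, \lu{x}_u, t}} \coloneqq \lutrm_{\dist}$. Note that the edge case $t = 0$ needs no separate treatment: since $u \in \setoftseql{0}$ forces $u = \emptytseq$, only \eqref{eqn:inst trm right} has to be checked there, the $t \neq 0$ clause being vacuous.

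For uniqueness, suppose that $\lutrm^{\star}$ is any element of $\setoflutrm$ satisfying \eqref{eqn:inst trm right} for all $\lu{x}, \lu{y}$ in $\lustsp$. The limit on the left-hand side of \eqref{eqn:inst trm right} exists by Proposition~\ref{prop:Instantaneous transition rate of the lumped stochastic process} and is a property of the lumped process $\problum$ alone; it does not refer to any matrix. Hence $\lutrm^{\star}\gr{\lu{x}, \lu{y}}$ is forced to equal this limit, i.e. $\lutrm^{\star}\gr{\lu{x}, \lu{y}} = \lutrm_{\dist}\gr{\lu{x}, \lu{y}}$ for all $\lu{x}, \lu{y}$, so $\lutrm^{\star} = \lutrm_{\dist}$ as a matrix. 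This establishes that $\lutrm_{\gr{u, \lu{x}_u, t}}$ is the unique element of $\setoflutrm$ with the stated property.

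I do not anticipate a real obstacle here. The one place where care is needed is the bookkeeping of the nested sums over $\invlumap\gr{\lu{x}}$ and $\invlumap\gr{\lu{y}}$ when identifying the convex combination of Proposition~\ref{prop:Instantaneous transition rate of the lumped stochastic process} with the defining expression \eqref{eqn:lutrm_dist defintion} of $\lutrm_{\dist}$; once that is done, both existence and uniqueness are immediate from results already in hand.
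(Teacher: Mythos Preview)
Your proposal is correct and follows essentially the same route as the paper: the paper's proof is the one-liner ``Follows immediately from Proposition~\ref{prop:Instantaneous transition rate of the lumped stochastic process}, \eqref{eqn:lutrm_dist defintion} and \eqref{eqn:setoflutrm}'', and your argument is precisely the unpacking of that sentence, with the additional service of spelling out why uniqueness holds (namely, that the limits in \eqref{eqn:inst trm right} already pin down every entry of the matrix).
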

\begin{proof}
	Follows immediately from Proposition~\ref{prop:Instantaneous transition rate of the lumped stochastic process}, \eqref{eqn:lutrm_dist defintion} and \eqref{eqn:setoflutrm}.
	\qed
\end{proof}

We now combine several of our intermediary results concerning the lumped stochastic process $\problum$ to finally end up with the result we need to prove the results in Sect.~\ref{sec:Performing inferences using the lumped process}.
\begin{corollary}
\label{cor:Lumped is well-behaved and consistent}
	If $\probm$ is a positive and irreducible CTMC and $\lumap \colon \stsp \to \lustsp$ a lumping map, then the associated lumped stochastic process~$\problum$ is contained in $\setofconsproc_{\inidist, \trm, \lumap} \coloneqq \setofconsproc_{\setoflutrm, \lu\credset{}}^{\mathrm{W}}$.
\end{corollary}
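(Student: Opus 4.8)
The plan is to simply unpack the definition of $\setofconsproc_{\setoflutrm, \lu\credset}^{\mathrm{W}}$ recalled in Appendix~\ref{sapp:Sets of consistent processes and lower expectations} and verify its three constituent requirements for $\problum$, each of which has essentially already been established in the preceding results. As a preliminary, I would note that the set $\setofconsproc_{\setoflutrm, \lu\credset}^{\mathrm{W}}$ is well-defined: by Lemma~\ref{lem:Set of lumped trms:Properties} the set $\setoflutrm$ is a non-empty and bounded set of transition rate matrices, and $\lu\credset = \set{\luinidist}$ is trivially a non-empty set of distributions on $\lustsp$.

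Next I would check the three membership conditions in turn. Condition (i), that $\problum$ is a stochastic process, is exactly Theorem~\ref{the:Lumped process is a stochastic process}. Condition (ii), that $\problum$ is well-behaved in the sense of \cite[Definition~4.4]{2017KrakDeBock}, is exactly Corollary~\ref{cor:Lumped process is well-behaved}. Condition (iii), consistency with the credal set $\lu\credset$ in the sense of \cite[Definition~6.2]{2017KrakDeBock}, follows from the computation carried out just before Corollary~\ref{cor:Instantaneous transition rate of the lumped stochastic process}, which shows $\problum\gr{\lu{X}_0 = \lu{x}} = \luinidist\gr{\lu{x}}$ for every $\lu{x}$ in $\lustsp$; hence the (unique) initial distribution of $\problum$ is the element $\luinidist$ of $\lu\credset$.

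The substantive step is verifying consistency of $\problum$ with $\setoflutrm$ in the sense of \cite[Definition~6.1]{2017KrakDeBock}: for every $t$ in $\nnegreals$, every $u$ in $\setoftseql{t}$ and every $\lu{x}_u$ in $\lustsp_u$, there must exist a single transition rate matrix in $\setoflutrm$ whose $\lu{x}$-row coincides, for each $\lu{x}$, with the right derivative at $\Delta = 0$ of $\Delta \mapsto \problum\gr{\lu{X}_{t+\Delta} = \lu{y} \cond \lu{X}_u = \lu{x}_u, \lu{X}_t = \lu{x}}$, and — when $t \neq 0$ — also with the corresponding left derivative. This is precisely what Corollary~\ref{cor:Instantaneous transition rate of the lumped stochastic process} provides: it exhibits one element $\lutrm_{\gr{u, \lu{x}_u, t}}$ of $\setoflutrm$ for which both one-sided limits equal $\lutrm_{\gr{u, \lu{x}_u, t}}\gr{\lu{x}, \lu{y}}$ for all $\lu{x}, \lu{y}$ simultaneously. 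Plugging this matrix into \cite[Definition~6.1]{2017KrakDeBock} closes the argument, and $\problum \in \setofconsproc_{\setoflutrm, \lu\credset}^{\mathrm{W}} = \setofconsproc_{\inidist, \trm, \lumap}$ follows.

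I expect the main obstacle to be purely one of bookkeeping: carefully matching the one-sided-derivative formulation of consistency in \cite[Definition~6.1]{2017KrakDeBock} with the limits computed in Corollary~\ref{cor:Instantaneous transition rate of the lumped stochastic process}, and in particular confirming that the \emph{same} element of $\setoflutrm$ serves for both the left and right derivatives (which it does, by Proposition~\ref{prop:Instantaneous transition rate of the lumped stochastic process}) and across all rows $\lu{x}$. Once that correspondence is spelled out, the corollary is an immediate conjunction of Theorem~\ref{the:Lumped process is a stochastic process}, Corollary~\ref{cor:Lumped process is well-behaved}, the identity $\problum\gr{\lu{X}_0 = \lu{x}} = \luinidist\gr{\lu{x}}$, and Corollary~\ref{cor:Instantaneous transition rate of the lumped stochastic process}.
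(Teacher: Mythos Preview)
Your proposal is correct and follows essentially the same approach as the paper's own proof, which tersely invokes Corollary~\ref{cor:Lumped process is well-behaved} for well-behavedness and refers back to the preceding discussion in the section for consistency with $\setoflutrm$ and $\lu\credset$. Your version is simply more explicit, additionally citing Theorem~\ref{the:Lumped process is a stochastic process} and Corollary~\ref{cor:Instantaneous transition rate of the lumped stochastic process} and spelling out the bookkeeping that the paper leaves implicit.
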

\begin{proof}
	Recall that $\problum$ is well-behaved by Corollary~\ref{cor:Lumped process is well-behaved}.
	Furthermore, as we have just seen in this section, $\problum$ is consistent with $\setoflutrm$ and $\lu\credset$.
	The stated now follows because, by definition, $\setofconsproc_{\inidist, \trm, \lumap}$ contains all well-behaved stochastic processes that are consistent with $\setoflutrm$ and $\lu\credset$.
	\qed
\end{proof}

	\section{Proofs of the Results in Sect.~\ref{sec:Performing inferences using the lumped process}}
\label{app:Proofs of main results}
In the main text, we limited ourselves to determining bounds on marginal and limit expectations of functions $f$ in $\setoffn\gr{\stsp}$ that are lumpable with respect to $\lumap$, mainly due to length constraints.
Since this length constraint is not present in this extended pre-print, we here drop this restriction.

Let $\lumap \colon \stsp \to \lustsp$ be a lumping map.
Then the reduction to $\lustsp$ of a non-lumpable $f$ in $\setoffn\gr{\stsp}$ is not unequivocally defined.
Two restrictions that will turn out to be useful in our setting are $\lu{f}_L$ and $\lu{f}_U$ in $\setoffn\gr{\lustsp}$, defined for all $\lu{x}$ in $\lustsp$ as
\[
	\lu{f}_L\gr{\lu{x}}
	\coloneqq \min \set{ f\gr{x} \colon x \in \invlumap\gr{\lu{x}} }
	\quad\text{and}\quad
	\lu{f}_U\gr{\lu{x}}
	\coloneqq \max \set{ f\gr{x} \colon x \in \invlumap\gr{\lu{x}} }.
\]
Note that if $f$ is lumpable with respect to $\lumap$, then $\lu{f}_L = \lu{f} = \lu{f}_U$.
Moreover, we have the following two properties.
\begin{lemma}
\label{lem:Bounds on function by lumping}
	If $\lumap \colon \stsp \to \lustsp$ is a lumping map, then for all $x$ in $\stsp$ and $f$ in $\setoffn\gr{\stsp}$,
	\[
		\lu{f}_L\gr{\lumap\gr{x}}
		\leq f\gr{x}
		\leq \lu{f}_U\gr{\lumap\gr{x}}.
	\]
\end{lemma}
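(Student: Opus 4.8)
The plan is to unwind the definitions, since the statement is essentially a restatement of how $\lu{f}_L$ and $\lu{f}_U$ are built. First I would fix an arbitrary $x$ in $\stsp$ and an arbitrary $f$ in $\setoffn\gr{\stsp}$, and write $\lu{x} \coloneqq \lumap\gr{x}$. The key observation is that $x$ is an element of $\invlumap\gr{\lu{x}}$: indeed, recall that $\invlumap\gr{\lu{x}} = \set{ y \in \stsp \given \lumap\gr{y} = \lu{x} }$, and $\lumap\gr{x} = \lu{x}$ by the very choice of $\lu{x}$, so $x$ satisfies the membership condition.

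Consequently, $f\gr{x}$ is one of the values in the non-empty finite set $\set{ f\gr{y} \colon y \in \invlumap\gr{\lu{x}} }$. Since any element of a finite set of reals lies between the minimum and the maximum of that set, and since by definition $\lu{f}_L\gr{\lu{x}} = \min \set{ f\gr{y} \colon y \in \invlumap\gr{\lu{x}} }$ and $\lu{f}_U\gr{\lu{x}} = \max \set{ f\gr{y} \colon y \in \invlumap\gr{\lu{x}} }$, we conclude that $\lu{f}_L\gr{\lumap\gr{x}} \leq f\gr{x} \leq \lu{f}_U\gr{\lumap\gr{x}}$. As $x$ and $f$ were arbitrary, this proves the claim. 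There is no real obstacle here: the only thing worth spelling out is the membership $x \in \invlumap\gr{\lumap\gr{x}}$, which follows directly from the definition of the inverse lumping map, and everything else is the elementary fact that a value lies between the extremes of a set containing it.
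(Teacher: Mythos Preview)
Your proof is correct and matches the paper's approach: the paper simply states that the lemma ``follows immediately from the definition of $\lu{f}_L$ and $\lu{f}_U$,'' and your argument is precisely the unwinding of those definitions.
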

\begin{proof}
	Follows immediately from the definition of $\lu{f}_L$ and $\lu{f}_U$.
	\qed
\end{proof}
\begin{lemma}
\label{lem:Lumped expectation as bound on marginal expectation}
	Let $\probm$ be a positive and irreducible CTMC and $\lumap \colon \stsp \to \lustsp$ a lumping map.
	Then for all $f$ in $\setoffn\gr{\stsp}$ and $t$ in $\nnegreals$,
	\[
		\prevlum\gr{\lu{f}_L\gr{\lu{X}_t}}
		\leq \prevm\gr{f\gr{X_t}}
		\leq \prevlum\gr{\lu{f}_U\gr{\lu{X}_t}}.
	\]
\end{lemma}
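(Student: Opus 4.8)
The plan is to reduce both inequalities to a single identity for the one-dimensional marginal of the lumped process, namely \(\problum\gr{\lu{X}_t = \lu{x}} = \sum_{x \in \invlumap\gr{\lu{x}}} \probm\gr{X_t = x}\) for every \(\lu{x}\) in \(\lustsp\), after which the result follows by comparing \(f\) with \(\lu{f}_L\) and \(\lu{f}_U\) fibre by fibre by means of Lemma~\ref{lem:Bounds on function by lumping}.

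First I would establish this marginal identity. For every \(\lu{x}\) in \(\lustsp\) the event \(\gr{\lu{X}_t = \lu{x}}\) belongs to \(\lu\eventalgebra_\emptytseq\), so \eqref{eqn:New defintion of lumped stochastic process:Marginal} gives \(\problum\gr{\lu{X}_t = \lu{x}} = \probm\gr{\invlumap_\paths\gr{\lu{X}_t = \lu{x}}}\); applying \eqref{eqn:invlumap_paths:state instantiation} with \(u\) the singleton time sequence consisting of \(t\) identifies \(\invlumap_\paths\gr{\lu{X}_t = \lu{x}}\) with the disjoint union \(\bigcup_{x \in \invlumap\gr{\lu{x}}} \gr{X_t = x}\), and finite additivity of \(\probm\) (Lemma~\ref{lem:Coherent conditional probability satisfies the laws of probability}) then yields the claimed expression. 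Because \(\lustsp\) is finite and \(\problum\) is a stochastic process (Theorem~\ref{the:Lumped process is a stochastic process}), the expectation \(\prevlum\) acts on any function of \(\lu{X}_t\) as the ordinary weighted sum, so substituting the marginal identity and re-indexing the resulting double sum over \(\stsp\)---which the fibres \(\invlumap\gr{\lu{x}}\), \(\lu{x} \in \lustsp\), partition---gives, for every \(\lu{g}\) in \(\setoffn\gr{\lustsp}\),
\[
	\prevlum\gr{\lu{g}\gr{\lu{X}_t}}
	= \sum_{\lu{x} \in \lustsp} \lu{g}\gr{\lu{x}} \problum\gr{\lu{X}_t = \lu{x}}
	= \sum_{x \in \stsp} \lu{g}\gr{\lumap\gr{x}} \probm\gr{X_t = x}.
\]
Using this once with \(\lu{g} = \lu{f}_L\) and once with \(\lu{g} = \lu{f}_U\) and invoking Lemma~\ref{lem:Bounds on function by lumping} term by term---every weight \(\probm\gr{X_t = x}\) being non-negative---I obtain
\[
	\prevlum\gr{\lu{f}_L\gr{\lu{X}_t}}
	= \sum_{x \in \stsp} \lu{f}_L\gr{\lumap\gr{x}} \probm\gr{X_t = x}
	\leq \sum_{x \in \stsp} f\gr{x} \probm\gr{X_t = x}
	\leq \sum_{x \in \stsp} \lu{f}_U\gr{\lumap\gr{x}} \probm\gr{X_t = x}
	= \prevlum\gr{\lu{f}_U\gr{\lu{X}_t}},
\]
and since \(\sum_{x \in \stsp} f\gr{x} \probm\gr{X_t = x} = \prevm\gr{f\gr{X_t}}\) by the definition of the expectation, this is precisely the stated chain.

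The only step that genuinely requires care---and hence the main obstacle---is the passage \(\prevlum\gr{\lu{g}\gr{\lu{X}_t}} = \sum_{\lu{x} \in \lustsp} \lu{g}\gr{\lu{x}} \problum\gr{\lu{X}_t = \lu{x}}\): one must check that \(\gr{\lu{X}_t = \lu{x}}\) really is an element of \(\lu\eventalgebra_\emptytseq\) for every \(\lu{x}\) (so that \(\problum\) is defined there and \eqref{eqn:New defintion of lumped stochastic process:Marginal} applies), and that by Theorem~\ref{the:Lumped process is a stochastic process} together with Lemma~\ref{lem:Coherent conditional probability satisfies the laws of probability} the restriction of \(\problum\) to this algebra is a genuine finitely additive probability, so that \(\prevlum\) is linear and monotone on \(\setoffn\gr{\lustsp}\) in the usual sense. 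Once this bookkeeping is settled, everything else is a finite rearrangement of sums.
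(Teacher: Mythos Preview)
Your proposal is correct and follows essentially the same route as the paper: both hinge on the marginal identity \(\problum\gr{\lu{X}_t = \lu{x}} = \sum_{x \in \invlumap\gr{\lu{x}}} \probm\gr{X_t = x}\) obtained from \eqref{eqn:New defintion of lumped stochastic process:Marginal} and \eqref{eqn:invlumap_paths:state instantiation}, followed by Lemma~\ref{lem:Bounds on function by lumping}. The only cosmetic difference is that the paper first invokes monotonicity of \(\prevm\) to get the lower bound and then derives the upper bound via the conjugacy trick \(g \coloneqq -f\) (using \(\lu{g}_L = -\lu{f}_U\)), whereas you establish both inequalities simultaneously by term-by-term comparison against the non-negative weights \(\probm\gr{X_t = x}\); the underlying argument is the same.
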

\begin{proof}
	We start by proving the lower bound.
	Note that by Lemma~\ref{lem:Bounds on function by lumping} and the monotonicity of $\prevm$,
	\[
		\prevm\gr{\lu{f}_L\gr{\lumap\gr{X_t}}}
		\leq \prevm\gr{f\gr{X_t}}.
	\]
	Some straightforward manipulations yield
	\begin{align*}
		\prevm\gr{\lu{f}_L\gr{\lumap\gr{X_t}}}
		&= \sum_{x \in \stsp} \lu{f}_L\gr{\lumap\gr{x}} \probm\gr{X_t = x}
		= \sum_{\lu{x} \in \lustsp} \lu{f}_L\gr{\lu{x}} \sum_{x \in \invlumap\gr{\lu{x}}} \probm\gr{X_t = x} \\
		&= \sum_{\lu{x} \in \lustsp} \lu{f}_L\gr{\lu{x}} \probm\gr*{\bigcup_{x \in \invlumap\gr{\lu{x}}} \gr{X_t = x}}
		= \sum_{\lu{x} \in \lustsp} \lu{f}_L\gr{\lu{x}} \problum\gr{\lu{X}_t = \lu{x}} \\
		&= \prevlum\gr{\lu{f}_L\gr{\lu{X}_t}},
	\end{align*}
	where the fourth equality follows from \eqref{eqn:invlumap_paths:state instantiation} and \eqref{eqn:New defintion of lumped stochastic process:Marginal}.
	Combining this equality with the previously obtained inequality immediately yields the lower bound of the stated.

	To prove the upper bound, we apply the lower bound on the function $g \coloneqq -f$.
	Note that $\lu{g}_L = - \lu{f}_U$, and that
	\[
		\prevlum\gr{- \lu{f}_U\gr{\lu{X}_t}}
		= \prevlum\gr{\lu{g}_L\gr{\lu{X}_t}}
		\leq \prevm\gr{g\gr{X_t}}
		= \prevm\gr{- f\gr{X_t}}
	\]
	clearly implies that $\prevm\gr{f\gr{X_t}} \leq \prevlum\gr{\lu{f}_U\gr{\lu{X}_t}}$.
	\qed
\end{proof}

The following result is slightly more general than Theorem~\ref{the:Bounds on marginal expectation}.
Recall from Sect.~\ref{ssec:Induced imprecise CTMC} that we use $\lulto_t$ to denote the lower transition operator over $t$ associated with $\lultro$ according to \eqref{eqn:lumped lower transition operator}.
\begin{proposition}
\label{prop:General bounds on marginal expectation}
	If $\probm$ is a positive and irreducible CTMC and $\lumap \colon \stsp \to \lustsp$ a lumping map, then for all $f$ in $\setoffn\gr{\stsp}$ and $t$ in $\nnegreals$,
	\[
		\luinidist \lulto_t \lu{f}_L
		\leq \prev\gr{f\gr{X_t}}
		= \inidist \tm_t f
		\leq - \luinidist \lulto_t \gr{- \lu{f}_U}.
	\]
\end{proposition}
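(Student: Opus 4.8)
The plan is to reduce the statement to a single inequality about the lumped process~$\problum$ and then invoke the imprecise Markov property. First I would dispose of the middle equality $\prevm\gr{f\gr{X_t}} = \inidist \tm_t f$, which is nothing but the identity recorded at the end of Sect.~\ref{ssec:Homogeneous CTMCs}. By Lemma~\ref{lem:Lumped expectation as bound on marginal expectation} we already have $\prevlum\gr{\lu{f}_L\gr{\lu{X}_t}} \leq \prevm\gr{f\gr{X_t}} \leq \prevlum\gr{\lu{f}_U\gr{\lu{X}_t}}$, so it remains only to establish the single claim that
\[
	\luinidist \lulto_t g \leq \prevlum\gr{g\gr{\lu{X}_t}}
	\quad\text{for every $g$ in $\setoffn\gr{\lustsp}$ and $t$ in $\nnegreals$.}
\]
Applying this claim with $g = \lu{f}_L$ then yields the lower bound of the proposition, and applying it with $g = -\lu{f}_U$ together with the linearity of the precise expectation $\prevlum$ gives $\prevlum\gr{\lu{f}_U\gr{\lu{X}_t}} = -\prevlum\gr{-\lu{f}_U\gr{\lu{X}_t}} \leq -\luinidist \lulto_t \gr{-\lu{f}_U}$, which combined with Lemma~\ref{lem:Lumped expectation as bound on marginal expectation} delivers the upper bound.

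To prove the claim I would start from Corollary~\ref{cor:Lumped is well-behaved and consistent}, which tells us that $\problum$ lies in $\setofconsproc_{\inidist, \trm, \lumap} = \setofconsproc_{\setoflutrm, \lu\credset}^{\mathrm{W}}$. Since $\luinidist$ is a positive distribution on $\lustsp$ (as noted after~\eqref{eqn:Lumped initial distribution}) and the initial marginal of $\problum$ equals $\luinidist$, marginalising over $\lu{X}_0$ gives $\prevlum\gr{g\gr{\lu{X}_t}} = \sum_{\lu{x} \in \lustsp} \luinidist\gr{\lu{x}} \prevlum\gr{g\gr{\lu{X}_t} \cond \lu{X}_0 = \lu{x}}$. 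For each $\lu{x}$, since $\problum$ is one of the processes over which the lower expectation $\underline{\prev}_{\setoflutrm, \lu\credset}^{\mathrm{W}}$ takes its infimum and $\prevlum$ is its expectation, $\prevlum\gr{g\gr{\lu{X}_t} \cond \lu{X}_0 = \lu{x}} \geq \underline{\prev}_{\setoflutrm, \lu\credset}^{\mathrm{W}}\gr{g\gr{\lu{X}_t} \cond \lu{X}_0 = \lu{x}}$; and since $\setoflutrm$ is non-empty, bounded and has separately specified rows by Lemma~\ref{lem:Set of lumped trms:Properties}, Proposition~\ref{prop:Imprecise Markov property}---applied with empty history $u = \emptytseq$, base time~$0$ and increment~$t$---identifies this lower bound as $[\lulto_t g]\gr{\lu{x}}$. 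Substituting and using $\luinidist\gr{\lu{x}} \geq 0$ then yields $\prevlum\gr{g\gr{\lu{X}_t}} \geq \sum_{\lu{x} \in \lustsp} \luinidist\gr{\lu{x}} [\lulto_t g]\gr{\lu{x}} = \luinidist \lulto_t g$, which is the claim.

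The hard part does not lie in this argument: it lives upstream, in establishing that $\problum$ is a well-behaved stochastic process consistent with $\setoflutrm$ and $\lu\credset$ (Corollary~\ref{cor:Lumped is well-behaved and consistent}, resting on Proposition~\ref{prop:Instantaneous transition rate of the lumped stochastic process}). Within the present proof the only genuinely delicate point---the one making Theorem~\ref{the:Bounds on marginal expectation} ``not quite immediate''---is bridging the \emph{conditional} lower expectation $[\lulto_t g]\gr{\lu{x}}$ supplied by Proposition~\ref{prop:Imprecise Markov property} with the \emph{unconditional} quantity $\luinidist \lulto_t g$: this needs both that the lumped initial distribution is fixed (so that $\lu\credset$ is a singleton whose unique element is exactly the initial marginal of $\problum$) and that it is positive (so that conditioning on $\lu{X}_0 = \lu{x}$ is legitimate). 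The conjugacy step for the upper bound is routine, using only linearity of $\prevlum$.
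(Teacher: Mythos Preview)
Your proof is correct and follows the same overall skeleton as the paper's: reduce to Lemma~\ref{lem:Lumped expectation as bound on marginal expectation}, use Corollary~\ref{cor:Lumped is well-behaved and consistent} to place $\problum$ inside $\setofconsproc_{\setoflutrm,\lu\credset}^{\mathrm W}$, and invoke Proposition~\ref{prop:Imprecise Markov property} (via Lemma~\ref{lem:Set of lumped trms:Properties}) to identify the conditional lower expectation with $[\lulto_t g](\lu{x})$. The one genuine difference is how you pass from the conditional quantity $[\lulto_t g](\lu{x})$ to the unconditional bound $\luinidist\,\lulto_t g$. The paper first drops from $\prevlum$ to the lower envelope $\underline{\prev}_{\setoflutrm,\lu\credset}^{\mathrm W}$ at the \emph{unconditional} level, and then needs two extra ingredients---the imprecise law of iterated expectation (Proposition~\ref{prop:Imprecise law of iterated expectation}, which uses convexity of $\setoflutrm$) and the singleton-credal-set identity \cite[Proposition~9.3]{2017KrakDeBock}---to collapse $\underline{\prev}_{\setoflutrm,\lu\credset}^{\mathrm W}\gr{g(\lu{X}_t)}$ down to $\luinidist\,\lulto_t g$. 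You instead stay with the \emph{precise} process $\problum$ and apply ordinary total probability over $\lu{X}_0$ before bounding each conditional term; this is legitimate because $\luinidist$ is strictly positive, and it buys you a slightly leaner argument that dispenses with Proposition~\ref{prop:Imprecise law of iterated expectation} and the external reference altogether. Your conjugacy step for the upper bound (applying the claim to $g=-\lu{f}_U$ and using linearity of $\prevlum$) is equivalent to the paper's device of applying the full lower bound to $-f$ and noting $(-f)_L=-\lu{f}_U$.
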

\begin{proof}
	We start by proving the lower bound.
	By Lemma~\eqref{lem:Lumped expectation as bound on marginal expectation},
	\[
		\prevlum\gr{\lu{f}_L\gr{\lu{X}_t}}
		\leq \prevm\gr{f\gr{X_t}}.
	\]
	It follows from Corollary~\ref{cor:Lumped is well-behaved and consistent} that
	\[
		\underline{\prev}_{\setoflutrm{}, \lu\credset}^{\mathrm{M}} \gr{\lu{f}_L\gr{\lu{X}_t}}
		\leq \prevlum\gr{\lu{f}_L\gr{\lu{X}_t}}.
	\]
	Moreover, from Proposition~\ref{prop:Imprecise law of iterated expectation} and Proposition~\ref{prop:Imprecise Markov property} (in that order)---which we may both use due to Lemma~\ref{lem:Set of lumped trms:Properties}---it follows that
	\[
		\underline{\prev}_{\setoflutrm{}, \lu\credset}^{\mathrm{M}} \gr{\lu{f}_L\gr{\lu{X}_t}}
		= \underline{\prev}_{\setoflutrm{}, \lu\credset}^{\mathrm{M}} \gr{ \underline{\prev}_{\setoflutrm{}, \lu\credset}^{\mathrm{M}} \gr{\lu{f}_L\gr{\lu{X}_t} \cond \lu{X}_0} }
		= \underline{\prev}_{\setoflutrm{}, \lu\credset}^{\mathrm{M}} \gr{ [\lulto_t \lu{f}_L]\gr{\lu{X}_0} }.
	\]
	Since $\lu\credset$ is a singleton, it follows from \cite[Proposition~9.3]{2017KrakDeBock} that this can be rewritten as
	\begin{align*}
		\underline{\prev}_{\setoflutrm{}, \lu\credset}^{\mathrm{M}} \gr{\lu{f}_L\gr{\lu{X}_t}}
		%&= \inf_{\problum' \in \mathbb{P}_{\setoflutrm{}, \lu\credset}^{\mathrm{M}}} \sum_{\lu{x} \in \lustsp} \problum'\gr{\lu{X}_0 = \lu{x}} [\lulto_t \lu{f}_L]\gr{\lu{x}}
		= \sum_{\lu{x} \in \lustsp} \luinidist\gr{\lu{x}} [\lulto_t \lu{f}_L]\gr{\lu{x}} %\\
		%&
		= \luinidist \lulto_t \lu{f}_L.
	\end{align*}
	Finally, combining all that we have found so far yields the lower bound of the stated:
	\[
		\luinidist \lulto_t \lu{f}_L
		= \underline{\prev}_{\setoflutrm{}, \lu\credset}^{\mathrm{M}} \gr{\lu{f}_L\gr{\lu{X}_t}}
		\leq \prevlum\gr{\lu{f}_L\gr{\lu{X}_t}}
		= \prevm\gr{\lu{f}_L\gr{\lumap\gr{X_t}}}
		\leq \prevm\gr{f\gr{X_t}}.
	\]

	To prove the upper bound, we simply apply the lower bound to $g \coloneqq -f$.
	This yields
	\[
		\luinidist \lulto_t \gr{- \lu{f}_U}
		= \luinidist \lulto_t \lu{g}_L
		\leq \prevm\gr{g\gr{X_t}}
		= \prevm\gr{- f\gr{X_t}},
	\]
	which clearly implies the upper bound of the stated.
	\qed
\end{proof}

\begin{proofof}{Theorem~\ref{the:Bounds on marginal expectation}}
	Since $f$ is lumpable with respect to $\lumap$, we know that $\lu{f}_L = \lu{f} = \lu{f}_U$.
	Therefore, the stated follows immediately from Proposition~\ref{prop:General bounds on marginal expectation}.
	\qed
\end{proofof}

\begin{proposition}
\label{prop:More general bounds on limit expectation}
	If $\prob$ is a CTMC with irreducible transition rate matrix $\trm$ and $\lumap \colon \stsp \to \lustsp$ a lumping map, then for all $f$ in $\setoffn\gr{\stsp}$, $\delta$ in $\posreals$ such that $\delta \norm{\trm} < 2$ and $n$ in $\natz$,
	\[
		\min \gr{I + \delta \lultro}^n \lu{f}_L
		\leq \limprevm\gr{f}
		\leq - \min \gr{I + \delta \lultro}^n (- \lu{f}_U).
	\]
	Furthermore, for fixed $\delta$, the lower and upper bounds in this expression become monotonously tighter with increasing $n$, and each converges to a (possibly different) constant as $n$ approaches $+ \infty$.
\end{proposition}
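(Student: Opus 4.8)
The plan is to reduce the lower bound to a statement about a single, explicit lumped transition rate matrix whose limit distribution we can identify, and to read off the monotonicity and convergence claims directly from the ergodic behaviour of $\lultro$.

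First I would dispose of the side conditions on $\delta$ and settle the ``furthermore'' part. Since $\norm{\lutrm} \leq \norm{\trm}$ for every $\lutrm$ in $\setoflutrm$ by Lemma~\ref{lem:Set of lumped trms:Properties}, Lemma~\ref{lem:Norm of ltro} gives $\norm{\lultro} = \sup \set{\norm{\lutrm} \colon \lutrm \in \setoflutrm} \leq \norm{\trm}$, so $\delta \norm{\lultro} < 2$ as well. Together with Corollary~\ref{cor:lultro is ergodic} (which states that $\lultro$ is irreducible), this places us in the hypotheses of Corollary~\ref{cor:lto skeleton of irreducible ltro converges}: applied to $\lu{f}_L$ it shows that $\set{\min \gr{I + \delta \lultro}^n \lu{f}_L}_{n}$ is non-decreasing and converges to a constant, and applied to $-\lu{f}_U$ it shows that $\set{\min \gr{I + \delta \lultro}^n (-\lu{f}_U)}_{n}$ is non-decreasing and convergent, hence $\set{-\min \gr{I + \delta \lultro}^n (-\lu{f}_U)}_{n}$ is non-increasing and convergent. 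This is exactly the ``furthermore'' assertion, and it also means that to prove the lower bound for all $n$ it suffices to exhibit one $n$-independent quantity that dominates every $\min \gr{I + \delta \lultro}^n \lu{f}_L$ and is itself at most $\limprevm\gr{f}$.

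For the lower bound proper, let $\limdist$ be the (positive) stationary distribution of $\trm$, so $\limprevm\gr{f} = \limdist f$, and define $\lu\limdist$ on $\lustsp$ by $\lu\limdist\gr{\lu{x}} \coloneqq \sum_{x \in \invlumap\gr{\lu{x}}} \limdist\gr{x}$. The matrix $\lutrm_{\limdist}$ of \eqref{eqn:lutrm_dist defintion} lies in $\setoflutrm$ by \eqref{eqn:setoflutrm} and is an irreducible transition rate matrix by Lemma~\ref{lem:lutrm_posdist is a trm and irreducible}; a short computation using $\limdist \trm = 0$ shows $\lu\limdist \lutrm_{\limdist} = 0$, so by uniqueness of the stationary distribution, $\lu\limdist$ is the limit distribution of $\lutrm_{\limdist}$. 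Because $\lultro$ is the lower envelope of $\setoflutrm$, we have $\lultro g \leq \lutrm_{\limdist} g$ pointwise, and hence $\gr{I + \delta \lultro} g \leq \gr{I + \delta \lutrm_{\limdist}} g$, for every $g$ in $\setoffn\gr{\lustsp}$; since $\gr{I + \delta \lultro}$ is a monotone lower transition operator, a short induction on $n$ then gives $\gr{I + \delta \lultro}^n g \leq \gr{I + \delta \lutrm_{\limdist}}^n g$. Taking $g = \lu{f}_L$, applying $\min$, and invoking Lemma~\ref{lem:Lower and upper bound for limit expectation} for the irreducible matrix $\lutrm_{\limdist}$ (legitimate since $\delta \norm{\lutrm_{\limdist}} \leq \delta \norm{\trm} < 2$) yields the chain
\[
	\min \gr{I + \delta \lultro}^n \lu{f}_L
	\leq \min \gr{I + \delta \lutrm_{\limdist}}^n \lu{f}_L
	\leq \lu\limdist \lu{f}_L
	= \sum_{x \in \stsp} \limdist\gr{x}\, \lu{f}_L\gr{\lumap\gr{x}}
	\leq \limdist f
	= \limprevm\gr{f},
\]
where the last inequality is Lemma~\ref{lem:Bounds on function by lumping}. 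This is exactly the $n$-independent dominating quantity anticipated above.

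The upper bound then follows by conjugacy: applying the lower bound to $-f$ and using $\lu{(-f)}_L = -\lu{f}_U$ gives $\min \gr{I + \delta \lultro}^n (-\lu{f}_U) \leq \limprevm\gr{-f} = -\limprevm\gr{f}$, which rearranges to the claimed upper bound on $\limprevm\gr{f}$. I expect the only delicate point to be bookkeeping rather than conceptual: the induction $\gr{I + \delta \lultro}^n g \leq \gr{I + \delta \lutrm_{\limdist}}^n g$ has to be run carefully, using monotonicity of $\gr{I + \delta \lultro}$ at each step to push the inequality through a power, together with the pointwise domination $\lultro \leq \lutrm_{\limdist}$. Everything else is a direct appeal to the properties of $\setoflutrm$ and $\lultro$ already established in the appendix, and the continuous-time lumped process is not needed at all.
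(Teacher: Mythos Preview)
Your proposal is correct and follows essentially the same route as the paper: pick the lumped rate matrix $\lutrm_{\limdist}$, verify that its stationary distribution is $\lu\limdist$, sandwich $\gr{I+\delta\lultro}^n$ below $\gr{I+\delta\lutrm_{\limdist}}^n$ via an induction, invoke Lemma~\ref{lem:Lower and upper bound for limit expectation} and Lemma~\ref{lem:Bounds on function by lumping}, obtain the upper bound by conjugacy, and read off the monotone convergence from Corollaries~\ref{cor:lultro is ergodic} and~\ref{cor:lto skeleton of irreducible ltro converges}. The only cosmetic difference is that the paper runs the induction using the monotonicity of the transition matrix $\gr{I+\delta\lutrm_{\limdist}}$ (peeling from the right), whereas you use the monotonicity of the lower transition operator $\gr{I+\delta\lultro}$ (peeling from the left); both are valid since $\delta\norm{\lultro}\leq\delta\norm{\trm}<2$.
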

\begin{proof}
	As $\trm$ is irreducible, we know from Sect.~\ref{ssec:Irreducibility of homogeneous CTMCs} that there is a unique positive distribution, denoted by $\limdist$, such that $\limdist \trm = 0$.
	Hence, for all $y$ in $\stsp$,
	\[
		\sum_{x \in \stsp} \limdist\gr{x} \trm\gr{x, y}
		= 0.
	\]
	% Furthermore, we know that for all $n$ in $\nats$, $\delta$ in $\nnegreals$ such that $\delta \norm{\trm} \leq 2$ and $f$ in $\setoffn\gr{\stsp}$,
	% \[
	% 	\min \gr{I + \delta \trm}^n f
	% 	\leq \limprevm\gr{f}
	% 	= \limdist f
	% 	\leq \max \gr{I + \delta \trm}^n f.
	% \]

	Consider now $\lutrm_{\infty} \coloneqq \lutrm_{\limdist}$.
	Then by Lemma~\ref{lem:Set of lumped trms:Properties}, $\lutrm_{\infty}$ is irreducible.
	Let $\lulimdist'$ be the unique positive distribution on $\lustsp$ that satisfies the equilibrium condition $\lulimdist' \lutrm_{\infty} = 0$.
	We now claim that $\lulimdist' = \lulimdist$, where $\lulimdist$ is the positive distribution on $\lustsp$ defined by
	\[
		\lulimdist\gr{\lu{x}}
		= \sum_{x \in \invlumap\gr{\lu{x}}} \limdist\gr{x}
		\quad\text{for all $\lu{x}$ in $\lustsp$.}
	\]
	To verify this claim, we fix any $\lu{y}$ in $\lustsp$ and see that
	\begin{align*}
		\sum_{\lu{x} \in \lustsp} \lulimdist\gr{\lu{x}} \lutrm_{\infty}\gr{\lu{x}, \lu{y}}
		&= \sum_{\lu{x} \in \lustsp} \lulimdist\gr{\lu{x}} \sum_{x \in \invlumap\gr{\lu{x}}} \frac{\limdist\gr{x}}{\sum_{z \in \invlumap\gr{\lu{z}}} \limdist\gr{z}} \sum_{y \in \invlumap\gr{\lu{y}}} \trm\gr{x, y} \\
		&= \sum_{\lu{x} \in \lustsp} \gr*{\sum_{x \in \invlumap\gr{\lu{x}}} \limdist\gr{x}} \sum_{x \in \invlumap\gr{\lu{x}}} \frac{\limdist\gr{x}}{\sum_{z \in \invlumap\gr{\lu{z}}} \limdist\gr{z}} \sum_{y \in \invlumap\gr{\lu{y}}} \trm\gr{x, y} \\
		&= \sum_{\lu{x} \in \lustsp} \sum_{x \in \invlumap\gr{\lu{x}}} \limdist\gr{x} \sum_{y \in \invlumap\gr{\lu{y}}} \trm\gr{x, y} \\
		&= \sum_{y \in \invlumap\gr{\lu{y}}} \sum_{x \in \stsp} \limdist\gr{x} \trm\gr{x, y}
		= 0.
	\end{align*}
	As $\lu{y}$ was arbitrary, we find that $\lulimdist$ satisfies the equilibrium condition $\lulimdist \lutrm_{\infty} = 0$.
	Since $\lulimdist'$ is the unique positive distribution that satisfies this equilibrium condition, we conclude that $\lulimdist = \lulimdist'$.

	Fix now any $f$ in $\setoffn\gr{\stsp}$, $\delta$ in $\posreals$ such that $\delta \norm{\trm} < 2$ and $n$ in $\natz$.
	Note that if $n = 0$, the stated trivially holds.
	Hence, we now consider the case $n > 0$, starting with the lower bound.
	Recall from Lemma~\ref{lem:Set of lumped trms:Properties} that $\norm{\lutrm_\infty} \leq \norm{\trm}$, such that $\delta \norm{\lutrm_\infty} < 2$.
	Hence, from Lemma~\ref{lem:Bounds on function by lumping} it follows that
	\[
		\lulimdist \lu{f}_L
		\leq \limprevm\gr{f}
		= \limdist f.
	\]
	As $\lutrm_\infty$ is irreducible with stationary distribution $\lulimdist$, it follows from Lemma~\ref{lem:Lower and upper bound for limit expectation} that
	\[
		\min \gr{I + \delta \lutrm_{\infty}}^n \lu{f}_L
		\leq \lulimdist \lu{f}_L.
	\]
	Since $\lutrm_{\infty}$ is an element of $\setoflutrm{}$ by construction, it follows from \eqref{eqn:ltro for lumped} that $\lultro \lu{g} \leq \lutrm_{\infty} \lu{g}$ for any $\lu{g}$ in $\setoffn\gr{\lustsp}$, which implies that
	\begin{equation}
	\label{eqn:monotonicity used in proof}
		\gr{I + \delta \lultro} \lu{g}
		\leq \gr{I + \delta \lutrm_{\infty}} \lu{g}.
	\end{equation}
	Clearly, \eqref{eqn:monotonicity used in proof} implies that $\gr{I + \delta \lultro} f_L \leq \gr{I + \delta \lutrm_{\infty}} f_L$.
	In case $n = 1$, this is sufficient to prove the lower bound.
	In case $n > 1$, we need some more properties.
	Since $\delta \norm{\lutrm_\infty} < 2$, it follows from Lemma~\ref{lem:I + delta trm is tm} that $\gr{I + \delta \lutrm_\infty}$ is a transition matrix.
	Consequently, repeated application of \eqref{eqn:monotonicity used in proof} and the monotonicity of this transition matrix yields
	\begin{multline*}
		\gr{I + \delta \lutrm_{\infty}}^n f_L
		= \gr{I + \delta \lutrm_{\infty}}^{n-1} \gr{I + \delta \lutrm_{\infty}} f_L
		\geq \gr{I + \delta \lutrm_{\infty}}^{n-1} \gr{I + \delta \lultro} \lu{f}_L \\
		\geq \cdots
		\geq \gr{I + \delta \lutrm_{\infty}} \gr{I + \delta \lultro}^{n-1} \lu{f}_L
		\geq \gr{I + \delta \lultro}^n \lu{f}_L.
	\end{multline*}
	Combining all intermediate results, we find that
	\[
		\min \gr{I + \delta \lultro}^n \lu{f}_L
		\leq \min \gr{I + \delta \lutrm_{\infty}}^n \lu{f}_L
		\leq \lulimdist \lu{f}_L
		\leq \limdist f
		= \limprevm\gr{f},
	\]
	which proves the lower bound of the stated.

	The upper bound follows from applying the lower bound to $g \coloneqq -f$.
	As $\lu{g}_L = - \lu{f}_U$, we find that
	\[
		\min \gr{I + \delta \lultro}^n \gr{-\lu{f}_U}
		= \min \gr{I + \delta \lultro}^n \gr{\lu{g}_L}
		\leq \limprevm\gr{g}
		= \limprevm\gr{-f}.
	\]
	The upper bound now follows immediately from this inequality:
	\[
		\limprevm\gr{f}
		= -\limprevm\gr{-f}
		\leq - \min \gr{I + \delta \lultro}^n \gr{-\lu{f}_U}.
	\]

	We end this proof by verifying the statement concerning the monotonous convergence of the lower bound.
	Observe that by Lemma's \ref{lem:Norm of ltro} and \ref{lem:Set of lumped trms:Properties},
	\[
		\norm{\lultro}
		= \sup \set*{\norm{\lutrm} \colon \lutrm \in \setoflutrm{}}
		\leq \norm{\trm}.
	\]
	Hence, since $\delta \norm{\trm} < 2$, we find that $\delta \norm{\lultro} < 2$.
	The monotonous convergence now follows immediately from Corollaries \ref{cor:lto skeleton of irreducible ltro converges} and \ref{cor:lultro is ergodic}.
	\qed
\end{proof}

\begin{proofof}{Theorem~\ref{the:iterative approximation}}
	We know from \eqref{eqn:Norm of trm} that $\norm{\trm} = 2 \max \set{\abs{\trm\gr{x,x}} \colon x \in \stsp}$.
	Furthermore, since $f$ is lumpable with respect to $\lumap$, we know that $\smash{\lu{f}_L = \lu{f} = \lu{f}_U}$.
	Therefore, the stated follows immediately from Proposition~\ref{prop:More general bounds on limit expectation}.
	\qed
\end{proofof}
}{}

\end{document}